\theoremstyle{plain}
\newtheorem{theorem}{Theorem}[section]
\newtheorem{corollary}[theorem]{Corollary}
\newtheorem{lemma}[theorem]{Lemma}
\newtheorem{proposition}[theorem]{Proposition}
\theoremstyle{definition}
\newtheorem{definition}[theorem]{Definition}
\newtheorem{remark}[theorem]{Remark}
\numberwithin{equation}{section}
\newcommand{\Hom}{\operatorname{Hom}}
\newcommand{\cC}{\mathcal{C}}
\newcommand{\cF}{\mathcal{F}}
\newcommand{\cW}{\mathcal{W}}
\renewcommand{\L}{{\mathbb{L}}}
\newcommand{\R}{{\mathbb{R}}}
\newcommand{\Z}{{\mathbb{Z}}}
\newcommand{\kk}{{\mathbf{k}}}
\newcommand{\id}{\operatorname{id}}
\newcommand{\im}{\operatorname{im}}
\newcommand{\End}{\operatorname{End}}
\newcounter{llistadepth}
\newenvironment{manlist}[1]{\addtocounter{llistadepth}{1}
      \edef\llistacontador{llista\romannumeral\the\value{llistadepth}}
      \list{({#1{\llistacontador}})}{\usecounter{\llistacontador}
      \def\makelabel##1{\hss\llap{##1}}
      \itemsep=2pt\parsep=0pt\topsep=3pt plus 1pt minus 1 pt}}{\endlist
      \addtocounter{llistadepth}{-1}}
\newenvironment{romlist}{\begin{manlist}{\roman}}{\end{manlist}}
\newenvironment{numlist}{\begin{manlist}{\arabic}}{\end{manlist}}
\newcommand{\Ext}{\mathrm{Ext}}
\newcommand{\Tor}{\mathrm{Tor}}
\newcommand{\RHom}{\mathrm{RHom}}
\newcommand{\ad}{\mathrm{ad}}
\newcommand{\Ho}{\mathrm{Ho}}
\renewcommand{\c}{{\mathbf{C}}}
\renewcommand{\d}{{\mathbf{D}}}
\newcommand{\ch}{{\mathbf{CH}}}
\author{Hossein Abbaspour}
\begin{document}

\address{Laboratoire Jean Leray, Universit\'e de Nantes,\\ 2, rue de la Houssini\`ere  \\ Nantes 44300, France\\
email: {\tt abbaspour@univ-nantes.fr}} 

\title{On algebraic structures of the Hochschild complex}

\maketitle

%\abstract{We first review various known algebraic structures on the Hochschild (co)homology of a differential graded algebras under weak Poincar\'e duality  hypothesis, such as Calabi-Yau algebras,  derived Poincar\'e duality algebras and closed Frobenius algebras. This includes a BV-algebra structure on $HH^*(A,A^\vee)$ or $HH^*(A,A)$, which in the latter case is an extension of the natural Gerstenhaber structure on $HH^*(A,A)$.  As an example, after proving that the chain complex of the Moore loop space of a manifold $M$ is a CY-algebra and using Burghelea-Fiedorowicz-Goodwillie theorem we obtain a BV-structure on the homology of the free space.
%
% In Section 6 we prove that these BV/coBVstructures can be  indeed defined for the Hochschild homology of a symmetric open Frobenius DG-algebras. In particular we prove that the Hochschild homology and cohomology of a symmetric open Frobenius algebra is a BV and coBV-algebra.  In Section 7 we exhibit a BV structure on the shifted relative Hochschild homology of a symmetric commutative Frobenius algebra. The existence of a BV-structure on the relative  Hochschild homology was expected in the light of Chas-Sullivan and Goresky-Hingston results for free loop spaces. In Section 8 we present an action of Sullivan diagrams on the Hochschild (co)chain complex of a closed Frobenius DG-algebra. This recovers Tradler-Zeinalian~\cite{TZ} result for closed Froebenius algebras using the isomorphism $C^*(A ,A) \simeq C^*(A,A^\vee)$.}
 
\tableofcontents

\section*{Introduction}
In this article we study the algebraic structures of Hochschild homology and cohomology of differential graded associative algebras over a field $\kk$ in four settings: Calabi-Yau algebras, derived Poincar\'e duality algebras, open Frobenius algebras and closed Frobenius algebras. For instance, we prove the existence of a Batalin-Vilkovisky (BV) algebra structure on the Hochschild cohomology $HH^*(A,A)$ in the first two cases, and  on the Hochschild cohomology $HH^*(A,A^\vee)$ in the last two cases. 

Let us  explain the main motivation of the results presented in this chapter.
One knows from Chen~\cite{Chen} and Jones~\cite{Jones}  that the homology of $LM=C^\infty(S^1,M)$, the free loop space of a simply connected manifold $M$, can be computed by
\begin{equation}\label{eqgiso}
H_*(LM)\simeq HH^*(A,A^\vee),
\end{equation}
where $A=C^*(M)$ is the singular cochain algebra of $M$. Jones also proved an equivariant version $H_*^{S^1}(LM)\simeq HC^*(A)$. Starting with the work of Chas and Sullivan, many different algebraic structures on $H_*(LM)$ have been discovered. This includes a BV-algebra structure on $H_*(LM)$~\cite{CS1}, and an action of Sullivan chord diagrams on $H_*(LM)$ which in particular implies that $H_*(LM)$ is an open Frobenius algebra. In order to find an algebraic model of these structures using the Hochschild complex and the isomorphism above, one has to equip the cochain algebra $A$ with further structures.

In order to find the Chas-Sullivan BV-structure on $HH^*(A,A^\vee)$, one should take into account the Poincar\'e duality for $M$. Over a field $\kk$, we have a quasi-isomorphism $    A \rightarrow  C_*(M) \simeq A^\vee   $ given by capping with the fundamental class of $M$. Therefore one can use the result of Section~\ref{sec:Poincare} to find a BV-algebra structure on  $HH^*(A,A^\vee)\simeq HH^*(A,A)$. Said more explicitly, $H_*(LM)$ is isomorphic to $HH^*(A,A)$ as a BV-algebra, where the underlying Gerstenhaber structure of the BV-structure on $HH^*(A,A)$ is the standard one (see Theorem~\ref{Gersten}). This last statement, which is true over a field, is a result to which many authors have contributed~\cite{CJ, FTBV,Tradler,Mer}. The statement is not proved as yet for integer coefficients.

As we will see in Section~\ref{secmoore}, an alternative way to find an algebraic model for the BV-structure of $H_*(LM)$ is via the Burghelea-Fiedorowicz-Goodwillie isomorphism $H_*(LM)\simeq HH_*(C_*(\Omega M), C_*(\Omega M))$, where $\Omega M$ is the based loop space of $M$. This approach has the advantage of working  for all closed manifolds and it does not require $M$ to be simply connected. Moreover there is not much of a restriction on the coefficients~\cite{Malm}.

Now we turn our attention to the action of the Sullivan chord diagrams and to the open Frobenius algebra structure of $H_*(LM)$ (see~\cite{CG}). For that one has to assume that the cochain algebra has some additional structure. The results of Section~\ref{sec:chord} show that in order to have an action of Sullivan chord diagram  
on $HH^*(A,A^\vee)$ and $HH_*(A,A)$ we have to start with a closed  Frobenius algebra structure on $A$. As far as we know, such structure is not known on $C^*(M)$ but only on the differential forms $\Omega^*(M)$ (see~\cite{LamStan}). Therefore the isomorphism (\ref{eqgiso}) is an isomorphism of algebras over the PROP of Sullivan chord diagrams if we work with real coefficients (see also~\cite{CTZ}).

Here is a brief description of the organization of the chapter. In Section~\ref{section-CH} we introduce the Hochschild homology and cohomology of a differential graded algebra and various classical operations such as cup and cap product. We also give the definition of Gerstenhaber and BV-algebras. In particular we give an explicit description of the Gerstenhaber algebra structure one the Hochschild cohomology of $A$ with coefficients in $A$,  $HH^*(A,A)$. In Section~\ref{sec:derived} we explain how we can see  Hochschild (co)homology as a derived functor. That section includes a quick review  of model categories which can be skipped upon a first reading.

In Sections~\ref{sec:CY} and~\ref{sec:Poincare} we work with algebras satisfying a sort of derived Poincar\'e duality condition rather than being equipped with an inner product. In these two sections, we introduce a BV-structure on $HH^*(A,A)$, whose underlying Gerstenhaber structure is the standard one (see Section~\ref{section-CH}).

In Section~\ref{sec:Frobenius} we show that the Hochschild homology $HH_*(A,A)$ and Hochschild cohomology $HH^*(A,A^\vee)$ of a symmetric open Frobenius algebra $A$ are both BV-algebras. Note that we don't find a BV-algebra structure on $HH^*(A,A)$, although the latter is naturally a Gerstenhaber algebra (see Section~\ref{section-CH}). However, if the open Frobenius  algebra  $A$ is a closed one, then via the induced isomorphism $HH^*(A,A)\simeq HH^*(A,A^\vee)$ we obtain a BV-algebra structure on  $HH^*(A,A)$  whose underlying Gerstenhaber algebra structure is the standard one.  
 
In Section~\ref{sec:Frobenius-shifted} we present a BV-structure on the relative Hochschild homology $\widetilde{HH}_*(A)$  (after a shift in degree) for a symmetric commutative open Frobenius algebra $A$. We believe that this BV-structure provides a dual algebraic model for the string topology operations on the (relative) homology of the free loop space 
that are  discussed in~\cite{CS2} and~\cite{GorHing}. 
  
 In Section~\ref{sec:chord} we describe an action of the Sullivan chord diagrams on the Hochschild chains of a closed Frobenius algebra $A$, which induces  an action of the homology of the moduli space of curves (Section 2.10 in~\cite{WahWest}). In particular there are BV- and coBV-structures on $HH_*(A,A)$ and on the dual theory $HH^*(A,A^\vee)$. Our construction is inspired by~\cite{TZ} and is equivalent to those given in~\cite{TZ} and~\cite{KS} for closed Frobenius algebras. 
This formulation is very much suitable for an action of the moduli space in the spirit of Costello's construction for Calabi-Yau categories~\cite{CostelloCY}, see also~\cite{WahWest} for this particular case.  Finally, in Section~\ref{sec:cyclic} we will show how a BV-structure on Hochschild (co)homology induces a graded Lie algebra structure, and even better, a gravity algebra structure, on cyclic (co)homology.

\bigskip

\noindent\textbf{Acknowledgements.} I would like to thank Janko Latschev and Alexandru Oancea for encouraging me to write this chapter. I am indebted to Aur\'elien Djament, Alexandre Quesney and Friedrich Wagemann for reading the first draft and suggesting a few corrections. I am also grateful to Nathalie Wahl for pointing out a few errors and missing references in the first version. She also made  a few helpful suggestions on the organization of the content.  

\setcounter{section}{0}

\section{Hochschild complex} \label{section-CH}
Throughout this paper $\kk$ is a field. Let $A=\kk\oplus \bar{A}$ be an augmented unital differential $\kk$-algebra with differential $d_A$ of degree $\deg d_A=+1$, such that $\bar{A}=A/\kk$, or $\bar{A}$ is the kernel of the augmentation $\epsilon: A \rightarrow \kk$.

A \emph{differential graded (DG) $(A,d_A)$-module}, or \emph{$A$-module} for short, is a $\kk$-complex $(M,d_M)$ together with a (left) $A$-module structure
$\cdot: A\times M\rightarrow M$ such that $ d_M(am)=d_A(a)m+(-1)^{|a|}a d_M(m)$. The multiplication map is of degree zero, i.e. $\deg(am)=\deg a+\deg m $. In particular, the identity above implies that the differential $d_M$ has to be of degree $1$.

Similarly, a graded differential \emph{$(A,d)$-bimodule} $(M,d_M)$ is required to satisfy the identity
$$
d_M(amb)=d_A(a)mb+(-1)^{|a|}ad_M (m)b+(-1)^{|a|+|m|}amd_Ab.
$$
Equivalently $M$ is a $(A^e:=A\otimes A^{op}, d_A\otimes 1 +1\otimes d_A)$ DG-module where $A^{op}$ is the algebra whose
underlying graded vector space is $A$ with the opposite multiplication of $A$, i.e. $a\overset{op}{\cdot}b=(-1)^{|a|.|b|}b\cdot a$. 
From now on $Mod(A)$ denotes the category of (left or right) (differential) $A$-modules and  $Mod(A^e)$ denotes the category of differential $A$-bimodules. All modules considered in this article are differential modules. We will also drop the indices from the differential when there is no danger of confusion.

We recall that the \emph{two-sided bar construction}  (\cite{CarEilen,MacLane}) is given by
$B(A,A,A):= A\otimes T(s\bar{A})\otimes A$  equipped with the differential $d=d_{0}+d_{1}$, where
$d_{0}$ is the internal differential for the tensor product complex $A\otimes T(s\bar{A})\otimes A$ given by 
\begin{equation*}
\begin{split}
d_0(a[a_1,\cdots,a_n]b) = & d(a)[a_1,\cdots,a_n]b 
\\
& \qquad - \sum_{i=1}^n (-1)^{\epsilon_{i}}a[a_1,\cdots,d(a_i),\cdots,a_n]b
\\
& \qquad \qquad + (-1)^{\epsilon_{n+1}}Êa[a_1,\cdots,a_n]d(b),
\end{split}
\end{equation*}
and $d_1$ is the external differential given by 
\begin{equation}
\begin{split}
d_{1}(a[a_{1}, \cdots , a_{n}] b)=& (-1)^{|a|}aa_{1}[a_{2}, \cdots , a_{n}] b
\\ 
& \qquad + \sum_{i=2}^{n} (-1)^{\epsilon_{i}}a[a_{1} , \cdots , a_{i-1}a_{i}, \cdots , a_{n}] b
\\ 
&\qquad \qquad -(-1)^{\epsilon_{n}}a[a_{1}, \cdots , a_{n-1}]
a_{n}b.
\end{split}
\end{equation}
  Here $\epsilon_0=|a|$ and $\epsilon_{i}=|a|+|a_{1}|  +\cdots+
|a_{i-1}|-i+1$ for $i\ge 1$. The degree on $B(A,A,A)$ is defined by 
$$
\deg (a[a_{1}, \cdots , a_{n}]b)=|a|+|b|+\sum_{i=1}^n |s(a_i)|=|a|+|b|+ \sum_{i=1}^n |a_i|-n,
$$ 
so that $\deg (d_0+d_1)=+1$. We recall that $sA$ stands for the suspension of $A$, i.e. the shift in degree by $-1$.

We equip $A$ and $A\otimes_\kk A$, or $A\otimes A$ for short, with the \emph{outer} $A$-bimodule structure that is $a(b_1\otimes b_2)c=(ab_1)\otimes (b_2c)$.
Similarly $B(A,A,A)$ is equipped with the outer $A$-bimodule structure. This is a free resolution of $A$ as an $A$-bimodule which allows us to define \emph{Hochschild chains and cochains} of $A$ with
coefficients in $M$.  Then \emph{(normalized) Hochschild chain complex} with coefficients in $M$ is
\begin{equation}
C_*(A,M):= M \otimes_{A^e} B(A,A,A) =M \otimes  T(s \bar{A})
\end{equation}
and comes equipped with a degree +1 differential $ D=d_{0}+d_{1}$.  We recall that $TV=\oplus_{n\geq 0} V^{\otimes n}$ denotes the tensor algebra of a $\kk$-module $V$.

The internal differential is given by
\begin{equation}
\begin{split}
d_{0}(m  [a_{1},\cdots , a_{n}])&= d_M m  [a_{1},\cdots, a_{n}]-\sum_{i=1}^{n} (-1)^{\epsilon_{i}}m  [a_{1},\cdots,d_A a_{i},\cdots, a_{n}]\\
\end{split}
\end{equation}
and the external differential is
\begin{equation}
\begin{split}
d_{1}(m [a_{1}, \cdots , a_{n}] )= & (-1)^{|m|} m a_{1} [a_{2}, \cdots , a_{n}]\\ 
&\qquad + \sum_{i=2}^{n} (-1)^{\epsilon_{i}} m [a_{1},
\cdots, a_{i-1}a_{i}, \cdots ,a_{n}]\\ 
&\qquad \qquad -(-1)^{\epsilon_{n}(|a
_n|+1)} a_{n}m [a_{1}, \cdots ,  a_{n-1}],
\end{split}
\end{equation}
with $\epsilon_0=|m|$ and $\epsilon_{i}=|m|+|a_{1}|  +\cdots |a_{i-1}|-i+1 $ for $i\geq 1$. Note that the degree of  $m [a_{1}, \cdots , a_{n}]$ is $|m|+\sum_{i=1}^{n}|a_i|-n$.

When $M=A$, by definition $(C_*(A), D=d_0+d_1):=(C_*(A,A),D=d_0+d_1)$ is the \emph{Hochschild chain complex of} $A$ and  $HH_*(A,A):=\ker
D/\im D$ is the \emph{Hochschild homology of} $A$.

Similarly we define the $M$-valued \emph{Hochschild cochain complex} of $A$ to be the dual complex $$ C^*(A,M):=\Hom_{A^e} (B(A,A,A),
M)=\Hom_{\kk}(T(s\bar{A}),M). $$ For a homogenous cochain $f\in C^n(A,M)$, the degree $|f|$ is defined to be the degree of the linear
map $f: (s\bar{A})^{\otimes n}\rightarrow M$. In the case of Hochschild cochains, the external differential of $f\in \Hom (s\bar{A}^{\otimes n},
M)$ is
\begin{equation}
\begin{split}
d_{1}(f)(a_{1}, \cdots , a_{n})&=-(-1)^{(|a_1|+1)|f|}a_{1}f(a_{2}, \cdots , a_{n})\\ 
&\quad -\sum_{i=2}^{n}  (-1)^{\epsilon_i} f(a_{1}, \cdots ,
a_{i-1}a_{i}, \cdots a_{n})\\
& \quad +(-1)^{\epsilon_n} f( a_{1}, \cdots , a_{n-1})a_n,
\end{split}
\end{equation}
where $\epsilon_i=|f|+|a_1|+\cdots +|a_{i-1}|-i+1$. The internal differential of $f\in C^*(A,M)$ is
\begin{equation}
\begin{split}
d_{0}f(a_{1}, \cdots , a_{n})&=d_Mf(a_{1}, \cdots , a_{n})-\sum_{i=1}^n (-1)^{\epsilon_i} f(a_{1}, \cdots d_A a_i \cdots , a_{n}).
\end{split}
\end{equation}

\subsubsection*{Gerstenhaber bracket and cup product:}
When $M=A$, for $x\in C^m(A,A)$ and $y\in C^n(A,A)$ one defines the \emph{cup product} $x\cup y\in C^{m+n}(A,A)$ and the \emph{Gerstenhaber
bracket}  $[x,y] \in C^{m+n-1}(A,A)$ by

\begin{equation}
(x\cup y)(a_{1},\cdots , a_{m+n}):=(-1)^{|y|(\sum_{i\leq m} |a_{i}|+1)}x(a_{1},\cdots , a_{m})y(a_{n+1}, \cdots , a_{m+n}),
\end{equation}
 and

 \begin{equation}
 [x,y]:= x\circ y - (-1)^{(|x|+1)(|y|+1)}y\circ x,
 \end{equation}
 where
 \begin{eqnarray*}
 \lefteqn{(x\circ_j y)( a_{1},\cdots ,a_{m+n-1})  = } \\
& = &   (-1)^{(|y|+1)\sum_{i\leq j} (|a_{i}|+1)} x(a_{1}, \cdots , a_{j},  y(a_{j+1}, \cdots , a_{j+m}),\cdots  ).
 \end{eqnarray*}
 and 
 \begin{equation}
 x\circ y=\sum_j x\circ_j y
 \end{equation}
 
Note that this is not an associative product. It turns out that the operations $\cup$ and $[-,-]$ are chain maps, hence they define operations on $HH^*(A,A)$. Moreover, the following holds.

\begin{theorem}\label{Gersten}(Gerstenhaber~\cite{Gers}) $(HH^*(A,A),\cup,[-,-])$  is a \emph{Gerstenhaber algebra} that is:
\begin{enumerate}
    \item $\cup$ induces an associative and graded commutative product,
    \item  $[x,y\cup z] = [x,y]\cup z + (-1)^{(|x|-1)|y|}y\cup[x,z] $ (Leibniz rule),
    \item $[x,y] = -(-1)^{(|x|-1)(|y|-1)} [y,x] $,
    \item $[[x,y],z] = [x,[y,z]] +(-1)^{(|x|-1)(|y|-1)}[y,[x,z]] $ (Jacobi identity).
\end{enumerate}
At chain level the cup product $\cup$ is commutative up to homotopy, with the homotopy being given by $\circ$.
\end{theorem}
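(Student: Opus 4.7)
The plan is to verify every identity at the chain level so that it descends to $HH^*(A,A)$. First I would check that $\cup$ and $\circ_j$ are degree-respecting $\kk$-linear operations on $C^*(A,A)$ and that a direct expansion of the definition yields \emph{strict} associativity of $\cup$ on cochains, not merely up to homotopy. Associativity on cohomology is then immediate, as is the statement that $\cup$ and $[-,-]$ are well-defined there once the homotopy formula below is established.

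The heart of the proof consists of two chain-level identities. The first is the \emph{pre-Lie identity} for the non-associative composition $\circ = \sum_j \circ_j$: for all $x,y,z \in C^*(A,A)$,
\begin{equation*}
(x \circ y) \circ z - x \circ (y \circ z) \;=\; (-1)^{(|y|+1)(|z|+1)}\bigl((x \circ z) \circ y - x \circ (z \circ y)\bigr).
\end{equation*}
This is proved by expanding each $\circ_j$ and pairing terms according to whether the insertion positions of $y$ and $z$ into $x$ are disjoint or nested; the disjoint contributions cancel pairwise between the two sides, while the nested contributions recombine into the associator. Antisymmetrizing this identity in $y$ and $z$ produces the graded Jacobi identity for $[-,-]$, and graded antisymmetry of the bracket follows directly from its definition.

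The second key identity is the \emph{homotopy formula}
\begin{equation*}
D(x \circ y) - Dx \circ y - (-1)^{|x|+1} x \circ Dy \;=\; (-1)^{|x|+1}\bigl(x \cup y - (-1)^{|x||y|} y \cup x\bigr),
\end{equation*}
which simultaneously shows that $[-,-]$ is a chain map and that $x \cup y$ and $(-1)^{|x||y|} y \cup x$ are cohomologous with explicit homotopy $x \circ y$. To prove it I expand $D = d_0 + d_1$ and sort the terms of $d_1(x \circ_j y)$ by what $d_1$ does at the insertion boundary: the multiplications that cross the boundary between $x$ and $y$ collapse, after a telescoping cancellation, to the two cup products on the right; the remaining interior multiplications and the contributions of $d_0$ reproduce $Dx \circ y$ and $\pm x \circ Dy$. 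The Leibniz rule for $[-,-]$ with respect to $\cup$ is established by an analogous but longer computation, splitting insertions of $y \cup z$ into $x$ according to whether they land inside the $y$-block, the $z$-block, or straddle the two; the straddling contributions cancel modulo a coboundary, yielding the stated identity on $HH^*(A,A)$.

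The main obstacle is entirely the bookkeeping of Koszul signs: the shifts $|a_i|+1$ coming from the suspension $s\bar A$, the signs $(-1)^{|y|(\cdots)}$ in $\cup$ and $\circ_j$, and the signs in $d_0, d_1$ must all conspire correctly, and a single miscounted sign destroys the pre-Lie or homotopy identity. I would handle this by systematically working with the shifted variables $sa_i$ of degree $|a_i|-1$, so that each insertion point contributes a uniform shift and the Koszul rule applies in its cleanest form; this is the standard device that renders both identities transparent.
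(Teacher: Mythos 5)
Your proposal is correct and is precisely the classical argument of Gerstenhaber, which the paper does not reproduce --- Theorem \ref{Gersten} is stated there with only a citation to \cite{Gers}. The two chain-level identities you isolate (the graded pre-Lie identity for $\circ$, whose antisymmetrization gives the Jacobi identity, and the homotopy formula exhibiting $x\circ y$ as a chain homotopy for the commutativity of $\cup$) are exactly the content of Gerstenhaber's original proof, and your device of working with the shifted variables $s a_i$ is the standard way to tame the Koszul signs; the only point worth adding is that descent of $\cup$ itself to cohomology also requires the routine check that $D$ is a derivation for $\cup$, which is a separate (easy) expansion from the homotopy formula.
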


In this article we show that under some kind of Poincar\'e duality condition this Gerstenhaber structure is part of a BV-structure.
\begin{definition}\label{BV-def}(Batalin-Vilkovisky algebra) A \emph{BV-algebra} is a Gerstenhaber algebra  $(A^*,\cdot,[-,-])$ with a degree $+1$ operator
$\Delta: A^*\rightarrow A^{*+1}$ whose deviation from being a derivation for the product $\cdot$ is the bracket $[-,-]$, i.e. 
$$ [a,b]:=(-1)^{|a|}\Delta(ab)-(-1)^{|a|}\Delta(a)b-a \Delta(b), $$
and such that $\Delta^2=0$.
\end{definition}

It follows from $\Delta^2=0$ that $\Delta$ is a derivation for the bracket.
In fact the Leibniz identity for $[-,-]$ is equivalent to the \emph{7-term relation}~\cite{Getz}
\begin{equation}\label{7term}
\begin{split}
\Delta(abc)&= \Delta (ab)c +(-1)^{|a|} a\Delta(bc)+ (-1)^{(|a|-1)|b|}b\Delta (ac)\\
  &-\Delta (a) bc - (-1)^{|a|}a \Delta(b) c-(-1)^{|a|+|b|}ab\Delta c.
\end{split}
\end{equation}

Definition~\ref{BV-def} is equivalent to the following one:

\begin{definition}
A \emph{BV-algebra} is a graded commutative associative algebra  $(A^*,\cdot)$ equipped with a degree $+1$ operator $\Delta: A^*\rightarrow A^{*+1}$ which satisfies the 7-term relation (\ref{7term}) and $\Delta^2=0$. (It follows from the 7-term relation that $[a,b]:=(-1)^{|a|}\Delta(ab)-(-1)^{|a|}\Delta(a)b-a \Delta(b)$ is a Gerstenhaber bracket for the graded commutative associative algebra $(A^*,\cdot)$.)
\end{definition}
As we said before the Leibniz identity is equivalent to the 7-term identity and the Jacobi identity follows from $\Delta^2=0$ and the 7-term identity. We refer the reader who is interested in the homotopic aspects of BV-algebras to~\cite{ColVal}.

For $M=A^\vee:=\Hom_\kk(A,\kk)$, by definition 
$$
(C^*(A), D=d_0+d_1):=(C^*(A,A^\vee),d_0+d_1)
$$ 
is the \emph{Hochschild cochain complex of}
$A$, and 
$$
HH^*(A):=\ker D/ \im D
$$ 
is the \emph{Hochschild cohomology} of $A$. It is clear that $C^*(A)$ and $\Hom_k(C_*(A),k)$ are isomorphic as $k$-complexes, therefore the Hochschild cohomology $A$ is the dual theory of the
Hochschild homology of $A$. The Hochschild homology and cohomology of an algebra have an extra feature, which is the existence of the \emph{Connes operators} $B$, respectively $B^\vee$ (\cite{connes}). On the chains we have
\begin{equation}\label{B1}
    B(a_0  [a_1, a_2 \cdots, a_n ])=\sum_{i=1}^{n+1}(-1)^{\epsilon_i} 1[a_{i+1} \cdots a_n,a_0,\cdots,a_i]
\end{equation}
and on the dual theory $C^*(A)=\Hom_\kk (T(s\bar{A}),A^\vee)=\Hom (A \otimes  T(s\bar{A}), \kk)$ we have 
$$ 
(B^\vee \phi)(a_0 [a_1, a_2 \cdots, a_n ])=(-1)^{|\phi|}\sum_{i=1}^{n+1}(-1)^{\epsilon_i} \phi (1[a_{i+1} \cdots a_n,a_0,\cdots,a_i] ), 
$$ 
where $\phi \in C^{n+1}(A)=\Hom(A\otimes (s\bar{A})^{\otimes n+1}  , \kk)$ and $\epsilon_i=(|a_0|+\dots |a_{i-1}|-i)(|a_i|+\dots |a_{n}|-n+i-1)$. In other words $$B^\vee (\phi)= (-1)^{|\phi|} \phi \circ B.$$
Note that $
\deg(B)=-1 \text{ and } \deg B^\vee=+1.
$

\subsubsection*{Warning:} The degree $k$ of a cycle $x\in HH_k(A,M)$ is not given by the number of terms in a tensor product, but by the total degree.

\begin{remark}  In this article we use normalized Hochschild chains and cochains. It turns out that they are quasi-isomorphic to the
non-normalized Hochschild chains and cochains. The proof is the same as the one for algebras given in Proposition~1.6.5 on page 46 of~\cite{Loday}. One only
has to modify the proof to the case of simplicial objects in the category of differential graded algebras. The proof of the Lemma 1.6.6 of~\cite{Loday} works in this setting since the degeneracy maps commute with the internal differential of a simplicial differential graded algebra.

\end{remark}

\subsubsection*{Chain and cochain pairings and noncommutative calculus.}  Here we borrow some definitions and facts from noncommutative calculus
\cite{CTS}. Roughly said, one should think of $HH^*(A,A)$ and $HH_*(A,A)$ respectively as multi-vector fields and differential forms, and of $B$ as the
de Rham differential.

\subsubsection*{1. Contraction or cap product:} The pairing between elements 
$$
a_0 [a_1,\cdots, a_n]\in C_n(A,A),\qquad f\in C^k(A,A), \qquad n\geq k
$$ 
is given by
\begin{eqnarray}\label{contr-1t}
\lefteqn{i_f(a_0 [a_1,\cdots, a_n])}Ê\\
& := & (-1)^{|f|(\sum_{i=1}^k (|a_i|+1) )}a_0 f(a_1,\cdots, a_k)[ a_{k+1},\cdots, a_n]\in C_{n-k}(A,A). \nonumber 
\end{eqnarray}
This is a chain map $C^*(A,A)\otimes C_*(A,A)\rightarrow  C_*(A,A)$ which induces a pairing at cohomology and homology level.

\subsubsection*{2. Lie derivative:} The next operation is the infinitesimal Lie algebra action of $HH^*(A,A)$  on $HH_*(A,A)$ and is given by Cartan's formula
\begin{equation}
L_{f}=[B,i_f].
\end{equation}
Note that the Gerstenhaber bracket on $HH^*(A,A)$ becomes a (graded) Lie bracket after a shift of degrees by one.
This explains also the sign convention below. The three maps $i_f$, $L_f$ and $ B$ form a \emph{calculus}~\cite{CTS}, i.e.
\begin{eqnarray}
L_f&=&[B,i_f]\\
i_{[f,g]}&=&[L_f,i_g]\\
i_{f\cup g}&=& i_f\circ i_g\\
L_{[f,g]}&=&[L_f,L_g]\\
L_{f\cup g}&=&[L_f,i_g]
\end{eqnarray}

As $HH^*(A,A)$ acts on $HH_*(A)=HH_*(A,A)$ by contraction, it also acts on the dual theory $HH^*(A)= HH^*(A,A^\vee)$. More explicitly, $i_f (\phi) \in
C^{n}(A,A^\vee)$ is given by
\begin{equation}\label{contr-2}
i_f(\phi)(a_0[a_1,\cdots, a_n])\!:=(-1)^{|f|(|\phi|+ \sum_{i=0}^k (|a_i|+1) )}\phi (a_0 [f(a_1,\cdots, a_k), a_{k+1},\cdots, a_n]),
\end{equation}
where $\phi\in C^{n-k}(A,A^\vee)$ and $f\in C^k (A,A)$. In other words
$$
i_f(\phi) :=(-1)^{|f||\phi|} \phi\circ i_f.
$$

\section{Derived category of DGA and derived functors} \label{sec:derived}

Now we try to present the Hochschild (co)homology in a more conceptual way, i.e. as a derived functor on the category of $A$-bimodules. We must first introduce an appropriate class of objects which can approximate all $A$-bimodules. This is done properly using the concept of model category introduced by Quillen~\cite{Qui}. It is also the right language for constructing homological invariants of homotopic categories. It will naturally lead us to the construction of  derived categories as well.

\subsection{A quick review of model categories and derived functors} 
The classical references for this subject are  Hovey's book~\cite{Hov} and the Dwyer-Spali\'nsky manuscript~\cite{DwSpa}.
The reader who gets to know the notion of model category for the first time should not worry about the word ``closed'', which now has only a historical bearing. From now on we drop the word ``closed'' from ``closed model category".

\begin{definition} A \emph{model category} is a category $\c$ endowed with three classes of morphisms $\cC$ (\emph{cofibrations}), $\cF$ (\emph{fibrations}) and $\cW$ (\emph{weak equivalences}) such that the following conditions hold:
\begin{itemize}
\item[(MC1)] $\c$ is closed under finite limits and colimits.

\item[(MC2)] Let $f,g\in Mor(\c)$ such that $fg$ is defined. If any two among  $f,g$ and $fg$ are in $\cW$, then the third one is in $\cW$.

\item[(MC3)] Let $f$ be a retract of $g$, meaning that there is a commutative diagram
$$
\xymatrix{A \ar[r]Ê\ar[d]_f & C \ar[r] \ar[d]_g & A \ar[d]_f \\
B \ar[r]Ê& D \ar[r] & B
} 
$$
in which the two horizontal compositions are identities. If $g\in \cC$ (resp. $\cF$ or $\cW)$,  then $f\in  \cC$ (resp. $\cF$ or $\cW)$.

\item[(MC4)] For a commutative diagram as below with $i \in \cC$  and $p\in \cF$, the morphism $f$ making the diagram commutative exists  if
\begin{numlist}
\item $i\in \cW$ (left lifting property (LLP) of fibrations $f\in\cF$ with respect to acyclic cofibrations $i\in \cW\cap \cC$).
\item $p\in \cW$ (right lifting property (RLP) of cofibrations $i\in\cC$ with respect to acyclic fibrations $p\in \cW\cap \cF$).
\end{numlist}
\begin{equation}
\xymatrix{A\ar[d]_-i\ar[r] & X\ar[d]^-p\\ B\ar@{-->}[ru]^-f\ar[r] & Y}
\end{equation}
In the above we call the elements of $ \cW\cap \cC$ (resp.  $\cW\cap \cF$) \emph{acyclic cofibrations} (resp. \emph{acyclic fibrations}).

\item[(MC5)]   Any morphism $f:A\rightarrow B$ can be written as one of the following:
\begin{numlist}
\item $f=pi$ where $p\in \cF$ and $i\in \cC\cap \cW $;
\item $f=pi$ where $p\in \cF\cap \cW $ and $i\in \cC $.
\end{numlist}
\end{itemize}
\end{definition}

In fact, in a model category the lifting properties characterize the fibrations and cofibrations.

\begin{proposition}
In a model category:
\begin{romlist}
\item The cofibrations are the morphisms which have the RLP with respect to
acyclic fibrations.
\item  The acyclic cofibrations are the morphisms which have the RLP with respect
to fibrations.
\item The fibrations are the morphisms which have the LLP with respect to acyclic
cofibrations.
\item The acyclic fibrations in C are the maps which have the LLP with respect
to cofibrations.
\end{romlist}
\end{proposition}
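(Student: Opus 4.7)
The plan is to use the standard retract argument. The two key ingredients are the factorization axiom (MC5) and the fact that all three distinguished classes $\cC$, $\cF$, $\cW$ are closed under retracts by (MC3).

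In every case, one direction of the characterization is just a direct restatement of (MC4). For the converse in (i), suppose $f : A \to B$ has the stated lifting property against acyclic fibrations. Apply (MC5)(ii) to factor $f = p i$ with $i : A \to C$ a cofibration and $p : C \to B$ an acyclic fibration. The square with verticals $f$ and $p$, top edge $i$, and bottom edge $\id_B$ commutes because $p i = f$, and the lifting hypothesis on $f$ produces a map $r : B \to C$ satisfying $r f = i$ and $p r = \id_B$. The diagram with identity maps on the top row and $(r, p)$ on the bottom row then exhibits $f$ as a retract of $i$, so (MC3) gives that $f$ is a cofibration.

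The other three statements follow by exactly the same recipe, with an appropriate choice of factorization from (MC5). For (ii) one factors via (MC5)(i), landing on an acyclic cofibration followed by a fibration, and uses the hypothesis to retract $f$ onto the acyclic cofibration piece. For (iii) one again uses (MC5)(i), but now constructs a retract of $f$ onto the fibration piece by setting up the square in which the given acyclic cofibration sits on the left and $f$ sits on the right and applying the assumed lifting property of $f$ to produce the required section. Statement (iv) is symmetric, factoring instead via (MC5)(ii) and retracting $f$ onto the acyclic fibration.

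I do not anticipate any substantive difficulty; the only point requiring a bit of care is to keep track of the author's convention, explicit in (MC4), in which the morphism on the \emph{left} of the lifting square is said to possess the ``RLP'' and the one on the \emph{right} the ``LLP''. Once the correct lifting square is written down, the map needed for the retract is forced by the hypothesis, and (MC3) closes the argument in each of the four cases.
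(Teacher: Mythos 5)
Your argument is correct and complete: it is the standard retract argument (factor via (MC5), lift against the resulting factorization using the hypothesis, exhibit $f$ as a retract of the appropriate factor, and conclude by (MC3)), and you have rightly flagged the paper's reversed LLP/RLP labelling in (MC4), which is the only trap here. The paper itself states this proposition without proof, deferring to the standard references such as \cite{DwSpa}, and the proof given there is exactly the one you propose.
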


It follows from (MC1) that a model category $\c$ has an initial object $\emptyset$ and a terminal object $\ast$. An object $A\in Obj(\c)$ is called \emph{cofibrant} if the morphism $\emptyset
 \rightarrow A$ is a cofibration and is said to be \emph{fibrant} if the morphism
$A\rightarrow \ast$ is a fibration.

\medskip

\noindent \textbf{Example 1:}\label{ex-CHR} For  any unital associative ring $ R$, let $\ch(R)$ be the category of non-negatively graded chain complexes of left $R$-modules.
The following three classes of morphisms endow $\ch(R)$ with a model category structure:
\begin{numlist}
\item Weak equivalences $\cW$: these are the quasi-isomorphims, i.e. maps of $R$-complexes $f=\{f_k\}_{k\geq 0}:\{M_k\}_{k\in \Z}\rightarrow \{N_k\}_{k\geq 0}$ inducing an isomorphism $f_*: H_*(M)\rightarrow H_*(N)$ in homology.
\item Fibrations $\cF$: $f$ is a fibration if it is (componentwise) surjective, i.e. for all $k\geq 0$, $f_k: M_k\rightarrow N_k$ is surjective.
\item Cofibrations $\cC$: $f=\{f_k\}$ is a cofibration if for all $k\geq 0$, $f_k: M_k\rightarrow N_k$ is injective with a projective $R$-module as its cokernel.
Here we use the standard definition of projective $R$-modules, i.e. modules which are \emph{direct summands of free $R$-modules}.
\end{numlist}

\medskip

\noindent \textbf{Example 2:}
The category \textbf{Top} of topological spaces can be given the structure
of a model category by defining a map $f : X \rightarrow Y$ to be
\begin{romlist}
\item a weak equivalence if $f$ is a homotopy equivalence;
\item a cofibration if $f$ is a Hurewicz cofibration;
\item a fibration if $f$ is a Hurewicz fibration.
\end{romlist}

Let $A$ be a closed subspace of a topological space $B$. We say that the inclusion $i:A\hookrightarrow B$ is a \emph{Hurewicz cofibration} if it has the homotopy extension property that is for all maps $f:B\rightarrow X$, any homotopy $F:A\times [0,1]\rightarrow X$ of $f|_A$ can be extended to a homotopy of $f:B\rightarrow X$.
\begin{equation*}
\xymatrix{B\cup (A\times [0,1]) \ar[r]^-{f\cup F} \ar[d]_-{ id\times {0}\cup (i\times id)}  & X \\ B \times[0,1]\ar@{-->}[ur]& }
\end{equation*}

A \emph{Hurewicz fibration}  is a continuous map $E\rightarrow B$ which has the homotopy lifting property with respect to all continuous maps $X\rightarrow B$, where $X\in$ \textbf{Top}.

\medskip

\noindent \textbf{Example 3:}  The category \textbf{Top} of topological spaces can be given the
structure of a model category by defining $f : X \rightarrow Y$ to be
\begin{romlist}
\item  a weak equivalence when it is a weak homotopy equivalence.
\item a cofibration if it is a retract of a map $X \rightarrow Y'$ in which $Y'$ is obtained
from $X$ by attaching cells,
\item a fibration if it is a Serre fibration.
\end{romlist}

We recall that a \emph{Serre fibration} is a continuous map $E\rightarrow B$ which has the homotopy lifting property with respect to all continuous maps $X\rightarrow B$ where $X$ is a CW-complex (or, equivalently, a cube).

\subsubsection*{Cylinder, path objects and homotopy relation.}  After setting up the general framework, we define the notion of homotopy. A \emph{cylinder object} for $A\in Obj(\c)$ is an object $A\wedge I \in Obj(\c)$ with a \emph{weak equivalence}
$\sim: A\wedge I \rightarrow A$ which factors the natural map  $id_A\sqcup id_A : A\coprod A \rightarrow A$:

$$
id_A\sqcup id_A:A \coprod A \overset{i}{\rightarrow}  A\wedge I \overset{\sim}{\rightarrow}  A
$$

Here  $A\coprod A\in Obj(\c) $ is the colimit, for which one has two structural maps $in_{0},in_{1}:A\to A\coprod A  $.  Let $i_{0}= i\circ in_{0}$ and $i_{1}= i\circ in_{1}$. A cylinder object $A\wedge I$ is said to be \emph{good} if $A \coprod A \rightarrow  A\wedge I$ is a cofibration. By (MC5), every $A \in Obj(\c)$ has a good cylinder object.

\begin{definition}
Two maps $f,g: A\rightarrow B $ are said to be \emph{left homotopic} $ f\overset{l}{\sim} g$ if there is a cylinder object $A\wedge I$  and  $H: A\wedge I\rightarrow B$ such that $f=H\circ i_{0}$ and $g=H\circ i_{1}$. A left homotopy is said to be \emph{good} if the cylinder object $A\wedge I$ is good. It turns out that every left homotopy relation can be realized by a good cylinder object. In addition one can prove that if $B$ is a fibrant object, then a left homotopy for $f$ and $g$ can be refined into a \emph{very good one}, i.e. $A\wedge I\rightarrow A$ is a fibration.

\end{definition}
It is easy to prove the following:

\begin{lemma}\label{equi-rel}
If $A$ is cofibrant, then left homotopy $\overset{l}{\sim}$ is an equivalence relation on $\Hom_{\c}(A,B)$.
\end{lemma}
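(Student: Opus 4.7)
My plan is to verify the three axioms of an equivalence relation in turn; reflexivity and symmetry will be formal, and transitivity is where the cofibrancy hypothesis on $A$ will enter in an essential way.

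For reflexivity, given $f:A\to B$, I will pick any cylinder object $\sigma: A\wedge I \to A$ (which exists by (MC5) applied to the fold map $A\coprod A \to A$) and set $H:=f\circ\sigma$. Since $\sigma\circ i_0 = \sigma\circ i_1 = \id_A$, one obtains $H\circ i_0 = H\circ i_1 = f$, so $f\overset{l}{\sim} f$. For symmetry, if $H: A\wedge I \to B$ realizes $f\overset{l}{\sim} g$ with structural maps $i_0, i_1$, the same object $A\wedge I$ with the roles of $i_0$ and $i_1$ swapped remains a cylinder object (the factorization of the fold map is symmetric in its two summands), and the same $H$ then realizes $g\overset{l}{\sim} f$.

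For transitivity, I would suppose $H: A\wedge I \to B$ is a good left homotopy from $f$ to $g$ and $H': A\wedge I' \to B$ a good left homotopy from $g$ to $h$, with structural maps $i_0, i_1$ and $i'_0, i'_1$ respectively. The strategy is to glue the two cylinders along the copy of $A$ where both homotopies equal $g$. Concretely I form the pushout
\[
\xymatrix{A \ar[r]^-{i'_0}\ar[d]_-{i_1} & A\wedge I'\ar[d]^-{j'} \\ A\wedge I \ar[r]_-{j} & A\wedge I''}
\]
and use its universal property to obtain a map $H'':A\wedge I''\to B$ induced by $H$ and $H'$ (consistent on the glued $A$ since both equal $g$ there), together with $\sigma'':A\wedge I''\to A$ induced by $\sigma$ and $\sigma'$. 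Declaring $\tilde{i}_0 := j\circ i_0$ and $\tilde{i}_1 := j'\circ i'_1$ as new structural maps, the identities $\sigma''\circ \tilde{i}_0 = \sigma''\circ \tilde{i}_1 = \id_A$ are immediate, so once $\sigma''$ is shown to be a weak equivalence, $A\wedge I''$ is a cylinder object and $H''$ witnesses $f\overset{l}{\sim} h$.

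The main (and only) obstacle is therefore to show that $\sigma''\in\cW$, and this is where the hypothesis enters. Since $A$ is cofibrant, $\emptyset \to A$ lies in $\cC$, and pushing it out along itself shows that each coproduct inclusion $A\to A\coprod A$ is a cofibration; composing with the cofibration $A\coprod A \to A\wedge I'$ coming from goodness, one sees that $i'_0: A\to A\wedge I'$ is a cofibration. The identity $\sigma'\circ i'_0 = \id_A$ combined with $\sigma'\in\cW$ forces $i'_0\in\cW$ by (MC2), so $i'_0$ is an acyclic cofibration. Because pushouts of acyclic cofibrations along arbitrary maps remain acyclic cofibrations in any model category, $j: A\wedge I \to A\wedge I''$ is a weak equivalence; applying (MC2) to the factorization $\sigma = \sigma''\circ j$ then yields $\sigma''\in\cW$, as desired.
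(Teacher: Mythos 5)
Your proof is correct and is precisely the standard argument (the one in Dwyer--Spalinski, which the paper cites and implicitly defers to when it states the lemma without proof): reflexivity and symmetry are formal, and transitivity glues two good cylinders along a pushout, using cofibrancy of $A$ to see that $i_1$ and $i_0'$ are acyclic cofibrations so that the glued object is again a cylinder. The only point worth making explicit is the (paper-stated) fact that any left homotopy can be upgraded to a good one before gluing, which you do invoke by assuming goodness at the outset.
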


Similary, we introduce the notion of path objects which will allow us to define right homotopy relation. A \emph{path object} for $A\in Obj(\c)$  is an object $A^{I}\in Obj(\c)$ with a weak equivalence $A\overset{\sim}{\rightarrow} A^I$ and a morphism $p:A^{I}\rightarrow A\times A$
which factors the diagonal map
$$
(id_{A},id_{A}): A\overset{\sim}{\rightarrow} A^{I}\overset{p}{\rightarrow}A\times A.
$$

Let $pr_{0}, pr_{1}: A\times A \rightarrow A$ be the structural projections. Define $p_{i}=pr_{i}\circ p$. A path object $A^ I$  is said to be \emph{good} if $A^{I}\rightarrow  A\times A$ is a fibration. By (MC5) every $A \in Obj(\c)$ has a good path object.

\begin{definition} Two maps $f,g: A\rightarrow B $ are said to be \emph{right homotopic} $ f\overset{r}{\sim} g$ if there is a path object $B^ I$  and  $H: A\rightarrow B^I$ such that $f=p_{0}\circ H$ and $g=p_{1}\circ H$. A right homotopy is said to be \emph{good} if the path object $P^I$ is good. It turns out that every right homotopy relation can be refined into a good one. In addition one can prove that if $B$ is a cofibrant object then a right homotopy for $f$ and $g$ can be refined into a \emph{very good one}, i.e. $B\rightarrow B^I$ is a cofibration.
\end{definition}

 \begin{lemma}\label{equi-rel-r}
If $B$ is fibrant, then right homotopy $\overset{r}{\sim}$ is an equivalence relation on $\Hom_{\c}(A,B)$.
\end{lemma}

One naturally asks  whether being right and left homotopic are related. The following result answers this question.

\begin{lemma}\label{r-l}  Let $f,g: A\rightarrow B$ be two morphisms in a model category $\c$.
\begin{numlist}
\item If $A$ is cofibrant then $f\overset{l}{\sim}g$ implies $f\overset{r}{\sim}g$.

\item If  $B$ is fibrant then $f\overset{r}{\sim}g$ implies $f\overset{l}{\sim}g$.
\end{numlist}
\end{lemma}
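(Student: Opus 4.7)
The two claims are formally dual under reversing arrows (cylinder $\leftrightarrow$ path, cofibration $\leftrightarrow$ fibration, cofibrant $\leftrightarrow$ fibrant), so my plan is to carry out (1) in detail and then indicate the dual adjustments for (2). First I would replace the given left homotopy by a \emph{good} one, using the fact already recorded in the chapter that every left-homotopy relation can be refined to a good one. So assume $H: A\wedge I \to B$ is a good left homotopy from $f$ to $g$: the natural map $j: A \coprod A \to A\wedge I$ is a cofibration and $\sigma: A\wedge I \to A$ is a weak equivalence with $\sigma i_0 = \sigma i_1 = \id_A$.

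The first key lemma to establish is that under the hypothesis that $A$ is cofibrant, the structural map $i_0: A \to A\wedge I$ is in fact an \emph{acyclic} cofibration. For this I would argue that $in_0: A \to A\coprod A$ is a cofibration (being the pushout of the cofibration $\emptyset \to A$ along $\emptyset \to A$, using that pushouts of cofibrations are cofibrations, which follows from (MC4) by a standard lifting argument), so $i_0 = j\circ in_0$ is a composition of cofibrations and hence itself a cofibration; then (MC2) applied to $\sigma\circ i_0 = \id_A$ shows $i_0$ is a weak equivalence. Dually, in (2) I would use that if $B$ is fibrant then $p_0: B^I \to B$ (for a good path object) is an acyclic fibration, by the same 2-out-of-3 argument applied to $p_0 \circ s = \id_B$ together with $(p_0,p_1)$ being a fibration and $pr_0: B\times B \to B$ being the pullback of $B \to \ast$.

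With these preparations, choose any good path object $s: B \xrightarrow{\sim} B^I$, $(p_0,p_1): B^I \to B\times B$ for $B$. I set up the lifting square
\begin{equation*}
\xymatrix{ A \ar[r]^-{s\circ f}\ar[d]_-{i_0} & B^I \ar[d]^-{(p_0,p_1)} \\ A\wedge I \ar[r]_-{(f\sigma,\, H)} \ar@{-->}[ru]^-{K} & B\times B }
\end{equation*}
which commutes because $(p_0,p_1)(sf) = (f,f)$ and $(f\sigma, H)\,i_0 = (f\sigma i_0, H i_0) = (f,f)$. Since $i_0$ is an acyclic cofibration and $(p_0,p_1)$ is a fibration, axiom (MC4)(i) furnishes a lift $K: A\wedge I \to B^I$. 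The composite $K\circ i_1: A \to B^I$ is then the desired right homotopy: $p_0 K i_1 = f\sigma i_1 = f$ and $p_1 K i_1 = H i_1 = g$.

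For (2), the completely dual construction uses a good right homotopy $H: A \to B^I$, a good cylinder $j: A\coprod A \hookrightarrow A\wedge I$, the acyclic fibration $p_0: B^I \to B$, and solves the lifting problem with top arrow $[sf, H]: A\coprod A \to B^I$ and bottom arrow $f\sigma: A\wedge I \to B$; then $p_1\circ(\text{lift})$ is a left homotopy from $f$ to $g$. The only real obstacle in either case is the first step — verifying that $i_0$ (respectively $p_0$) is acyclic cofibration (resp.\ fibration) — because this is where the cofibrancy (resp.\ fibrancy) hypothesis is consumed and where one must quietly invoke that pushouts of cofibrations are cofibrations (dually, pullbacks of fibrations are fibrations). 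Once this is in hand, the rest is a single application of (MC4).
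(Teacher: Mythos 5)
Your proof is correct and complete. The paper actually states this lemma without proof (it is the standard result from the Dwyer--Spalinski notes cited in the chapter), and your argument is precisely the classical one: the only genuinely nontrivial input is that cofibrancy of $A$ makes $i_0: A \to A\wedge I$ an acyclic cofibration for a good cylinder (via the cobase-change and composition stability of cofibrations plus (MC2) applied to $\sigma i_0 = \id_A$), after which a single application of (MC4) to the square against $(p_0,p_1): B^I \to B\times B$ produces the right homotopy $K i_1$; the dual half is handled correctly as well.
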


\subsubsection*{Cofibrant and Fibrant replacement and homotopy category.}  By applying  (MC5) to the canonical morphism
$\emptyset \rightarrow A$, there is a cofibrant object (not unique) $QA$ and an \emph{acyclic fibration} $p:QA\overset{\sim}{\rightarrow} A $
such that $\emptyset \rightarrow QA\overset{p}{\rightarrow} A$.  If $A$ is cofibrant we can choose $QA=A$.

\begin{lemma}\label{lift}
Given a morphism $f:A\rightarrow  B$ in $\c$, there is a morphism $\tilde{f}:QA\rightarrow QB$ such that the following diagram commutes:
\begin{equation}
\xymatrix{QA\ar[d]^-{p_A} \ar[r]^-{\tilde{f}} & QA \ar[d]^-{p_B}\\ A \ar[r]^-f & B  }
\end{equation}

The morphism $\tilde{f}$ depends on $f$ up to left and right homotopy, and is a weak equivalence if and only $f$ is. Moreover, if $B$ is fibrant then
the right or left homotopy class of $\tilde{f}$  depends only on the left homotopy class of $f$.
\end{lemma}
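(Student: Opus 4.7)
The plan is to invoke the lifting axiom (MC4) and the 2-out-of-3 axiom (MC2), essentially once per assertion. For existence, since $QA$ is cofibrant and $p_B\colon QB\to B$ is an acyclic fibration, the square
\begin{equation*}
\xymatrix{\emptyset \ar[d] \ar[r] & QB \ar[d]^-{p_B} \\ QA \ar[r]_-{f\circ p_A} \ar@{-->}[ur]^-{\tilde f} & B}
\end{equation*}
admits a lift $\tilde f$ by (MC4)(i). The weak-equivalence assertion follows immediately from $p_B\circ \tilde f=f\circ p_A$: since $p_A,p_B\in \cW$, two applications of (MC2) give $f\in\cW\Leftrightarrow \tilde f\in \cW$.

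For uniqueness of $\tilde f$ up to left homotopy, given two lifts $\tilde f,\tilde f'$, I would choose a good cylinder $QA\wedge I$ for $QA$, so that $QA\coprod QA\to QA\wedge I$ is a cofibration. The square
\begin{equation*}
\xymatrix{QA\coprod QA \ar[d] \ar[rr]^-{\tilde f\sqcup \tilde f'} && QB \ar[d]^-{p_B} \\ QA\wedge I \ar[r]_-{\sim} & QA \ar[r]_-{f\circ p_A} & B}
\end{equation*}
commutes because $p_B\tilde f=p_B\tilde f'=f\circ p_A$, and (MC4)(i) provides a good left homotopy. A symmetric argument using a good path object for $QB$ yields the right homotopy statement.

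The delicate step is the ``moreover'', where a left homotopy $H\colon A\wedge I\to B$ between $f$ and $g$ must be promoted to one between $\tilde f$ and $\tilde g$. My plan is to manufacture a good cylinder $X$ on $QA$ equipped with a compatible map $q\colon X\to A\wedge I$. First, apply (MC5)(i) to factor the composite $QA\coprod QA\to A\coprod A\hookrightarrow A\wedge I$ as a cofibration followed by an acyclic fibration $QA\coprod QA\hookrightarrow X\overset{q}{\to}A\wedge I$. Next, lift the fold map $QA\coprod QA\to QA$ along $p_A\colon QA\to A$ to a structural map $X\to QA$ using (MC4)(i); this map is a weak equivalence by (MC2), so $X$ is a good cylinder on $QA$. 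Finally, (MC4)(i) applied to
\begin{equation*}
\xymatrix{QA\coprod QA \ar[d] \ar[rr]^-{\tilde f\sqcup \tilde g} && QB \ar[d]^-{p_B} \\ X \ar[rr]_-{H\circ q} \ar@{-->}[urr] && B}
\end{equation*}
delivers the required homotopy in $QB$. The main obstacle is constructing this compatible cylinder $X$; the fibrancy hypothesis on $B$ enters through Lemma~\ref{r-l}(2) to translate the resulting left homotopy class into a right homotopy class when needed.
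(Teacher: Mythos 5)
The paper states Lemma \ref{lift} without proof (it is quoted from the standard references \cite{DwSpa,Hov}), so there is no internal proof to compare against; measured against the standard argument, your proposal is correct in all essentials. The existence, weak-equivalence, and uniqueness-up-to-homotopy steps are exactly the textbook route: lift against the acyclic fibration $p_B$ from the cofibrant object $QA$, apply (MC2) twice, and lift a good cylinder $QA\coprod QA\hookrightarrow QA\wedge I$ against $p_B$. Your treatment of the ``moreover'' clause is the one genuinely nonstandard (and nice) point: by factoring $QA\coprod QA\to A\wedge I$ as a cofibration followed by an acyclic fibration and then lifting the fold map against $p_A$, you build a good cylinder $X$ on $QA$ mapping \emph{down} to $A\wedge I$, and every square you need does commute; this transports the homotopy $H$ correctly and in fact never uses the fibrancy of $B$. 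The usual argument instead uses fibrancy of $B$ to refine $H$ to a \emph{very good} homotopy, so that $A\wedge I\to A$ becomes an acyclic fibration and a cylinder of $QA$ can be lifted \emph{up} to $A\wedge I$; your version is, if anything, slightly stronger. Three small corrections: the lifting case you invoke throughout is ``cofibration against acyclic fibration,'' which is clause (2) of (MC4) in this paper's numbering, not (i), and likewise the factorization you want is (MC5)(2); and your closing remark misattributes the role of fibrancy --- passing from the left to the right homotopy class of maps $QA\to QB$ uses Lemma \ref{r-l}(1) via the cofibrancy of $QA$, not Lemma \ref{r-l}(2), whose hypothesis would concern the target $QB$ rather than $B$.
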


Similarly one can introduce a fibrant replacement by applying (MC5) to the terminal morphism $A\rightarrow \ast$ and obtain a fibrant object $RA$  with an \emph{acyclic cofibration} $i_A:A\rightarrow RA$.

\begin{lemma}\label{desc}
Given a morphism $f:A\rightarrow  B$ in $\c$, there is a morphism $\tilde{f}:RA\rightarrow RB$ such that the following diagram  commutes:
\begin{equation}
\xymatrix{A\ar[d]^-{i_A} \ar[r]^-{f} & B \ar[d]^-{i_B}\\ RA \ar[r]^-{\tilde{f}} & RB  }
\end{equation}

The morphism $\tilde{f}$ depends on $f$ up to left and right homotopy, and is a weak equivalence if and only $f$ is. Moreover, if $A$ is cofibrant then  right or left homotopy class of $\tilde{f}$  depends only on the right homotopy class of $f$.

\end{lemma}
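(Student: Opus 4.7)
The plan is to dualize the proof of Lemma \ref{lift} throughout: where that argument used cofibrant replacement together with the right lifting property of acyclic fibrations, here I will use fibrant replacement together with the left lifting property of acyclic cofibrations, i.e.\ axiom (MC4)(i).

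First I produce the lift $\tilde f$. Since $i_A:A\to RA$ is an acyclic cofibration and $RB\to \ast$ is a fibration (because $RB$ is fibrant), applying (MC4)(i) to the square with top arrow $i_B\circ f$ and bottom arrow $RA\to \ast$ yields $\tilde f:RA\to RB$ with $\tilde f\circ i_A=i_B\circ f$. Since $i_A$ and $i_B$ are both weak equivalences, (MC2) applied to this identity shows that $\tilde f$ is a weak equivalence if and only if $f$ is.

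Next I address uniqueness up to homotopy. Given two such lifts $\tilde f_1,\tilde f_2$, choose via (MC5) a good path object $(RB)^I$ for $RB$, with weak equivalence $s:RB\overset{\sim}{\to}(RB)^I$ and fibration $(p_0,p_1):(RB)^I\to RB\times RB$. The map $h:=s\circ i_B\circ f:A\to (RB)^I$ satisfies $(p_0,p_1)\circ h=(i_B f,i_B f)=(\tilde f_1\circ i_A,\tilde f_2\circ i_A)$, so (MC4)(i) applied to the square with left arrow $i_A$, right arrow $(p_0,p_1)$, top $h$, and bottom $(\tilde f_1,\tilde f_2)$ produces a map $RA\to (RB)^I$ witnessing $\tilde f_1\overset{r}{\sim}\tilde f_2$. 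Since $RB$ is fibrant, Lemma \ref{r-l}(2) upgrades this to a left homotopy as well.

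The remaining and genuinely subtle step is to show that $\tilde f$ depends only on the right homotopy class of $f$. Suppose $f\overset{r}{\sim} f'$ via a good path object $B^I$ and a map $H:A\to B^I$. I will turn $B^I$ into a path object for $RB$ by factoring the composite $(i_B\times i_B)\circ(p_0,p_1):B^I\to RB\times RB$ through (MC5) as an acyclic cofibration $j:B^I\to P$ followed by a fibration $q:P\to RB\times RB$. To promote $P$ to a path object for $RB$, I lift the weak equivalence $B\to B^I\xrightarrow{j}P$ across $q$ against the acyclic cofibration $i_B:B\to RB$, with bottom arrow $\Delta_{RB}$; the resulting $\ell:RB\to P$ satisfies $q\circ \ell=\Delta_{RB}$ and is a weak equivalence by two-out-of-three. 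Now $j\circ H:A\to P$ satisfies $q\circ(j\circ H)=(i_B f,i_B f')=(\tilde f\circ i_A,\tilde f'\circ i_A)$, so one more application of (MC4)(i) produces $\widetilde H:RA\to P$ with $\widetilde H\circ i_A=j\circ H$ and $q\circ\widetilde H=(\tilde f,\tilde f')$, exhibiting $\tilde f\overset{r}{\sim}\tilde f'$. The hypothesis that $A$ is cofibrant forces $RA$ to be cofibrant as well (composition of cofibrations $\emptyset\to A\to RA$), so combined with fibrancy of $RB$ Lemma \ref{r-l} makes right and left homotopy agree on $\Hom_{\c}(RA,RB)$, which justifies the phrase ``right or left homotopy class''. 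The main obstacle is precisely this last step: producing a path object for $RB$ compatible with the given $B^I$ so that the homotopy between $f$ and $f'$ can be transported to a homotopy between $\tilde f$ and $\tilde f'$.
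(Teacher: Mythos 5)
The paper itself does not prove this lemma; it is quoted as part of the background review of model categories (the standard reference being Dwyer--Spalinski, where it appears as the dual of their cofibrant-replacement lemma). So I can only judge your argument on its own terms, and it is correct. The first two steps --- producing $\tilde f$ by lifting $i_B\circ f$ against the acyclic cofibration $i_A$ and the fibration $RB\to\ast$, and establishing uniqueness up to right (hence, by fibrancy of $RB$ and Lemma \ref{r-l}, left) homotopy via a good path object for $RB$ --- are exactly the standard dualization of the proof of Lemma \ref{lift}.

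Your treatment of the last step deviates from the textbook route in an interesting way. The usual argument first proves, as a separate lemma, that postcomposition by any map preserves right homotopy \emph{provided the source is cofibrant} (this is where one refines the right homotopy to a very good one, using cofibrancy of $A$), applies this to $i_B$ to get $i_Bf\overset{r}{\sim}i_Bf'$ via a path object of $RB$, and only then lifts against $i_A$. You instead manufacture the required path object $P$ for $RB$ directly from $B^I$, by factoring $(i_B\times i_B)\circ(p_0,p_1)$ and then lifting the diagonal through $q$ against the acyclic cofibration $i_B$; the two-out-of-three check that $\ell$ is a weak equivalence and the verification $q\circ j\circ H=(\tilde f\circ i_A,\tilde f'\circ i_A)$ are both correct. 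This exploits the special feature that $i_B$ is an acyclic cofibration (for a general postcomposition the analogous lift would not exist), and as a byproduct your transport step does not actually use the cofibrancy of $A$ at all --- that hypothesis only enters, as you correctly note, in making $RA$ cofibrant so that left and right homotopy coincide on $\Hom_{\c}(RA,RB)$ and the phrase ``right or left homotopy class'' is unambiguous. This is a legitimate and slightly sharper argument than the standard one.
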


\begin{remark}\label{remk-fib-cofib}
For a cofibrant object $A$, $RA$ is also cofibrant because the trivial morphism $(\emptyset \rightarrow RA)=(\emptyset \rightarrow A \overset{i_A}{\rightarrow}RA)$ can be written as the composition of two cofibrations, therefore is a cofibration.
In particular, for any object $A$,  $RQA$ is fibrant and cofibrant. Similarly, $QRA$ is  a fibrant and cofibrant object.
\end{remark}

\begin{lemma}\label{w-c-f}Suppose that $f : A \rightarrow X $ is a map in $\c$ between objects $A$ and $X$
which are both fibrant and cofibrant. Then $f$ is a weak equivalence if and only if $f$
has a homotopy inverse, i.e. if and only if there exists a map $g : X \rightarrow A$ such that
the composites $gf$ and $fg$ are homotopic to the respective identity maps.
\end{lemma}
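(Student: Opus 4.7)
The plan is to prove each direction separately, with the substantive content residing in the forward implication, where I would construct a homotopy inverse explicitly by factoring $f$ through an auxiliary object that is simultaneously fibrant and cofibrant.

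For the forward direction, first apply (MC5) to factor $f=pi$ with $i\colon A\to Z$ a trivial cofibration and $p\colon Z\to X$ a fibration; (MC2) applied to $f,i,p$ then promotes $p$ to a trivial fibration. The intermediate object $Z$ is simultaneously fibrant (since $Z\to *$ factors as $Z\xrightarrow{p}X\to *$, both fibrations) and cofibrant (since $\emptyset\to Z$ factors as $\emptyset\to A\xrightarrow{i}Z$, both cofibrations). I would then produce a retraction $r\colon Z\to A$ of $i$ by solving the lifting problem for $\id_A$ against the fibration $A\to *$, which is possible by (MC4)(1) since $i$ is a trivial cofibration, and a section $s\colon X\to Z$ of $p$ by solving the lifting problem for $\id_X$ against the cofibration $\emptyset\to X$, which is possible by (MC4)(2) since $p$ is a trivial fibration. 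Set $g:=rs\colon X\to A$.

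Verifying $fg\simeq\id_X$ and $gf\simeq\id_A$ reduces to the intermediate homotopies $ir\simeq\id_Z$ and $sp\simeq\id_Z$, for then $fg=p(ir)s\simeq ps=\id_X$ and $gf=r(sp)i\simeq ri=\id_A$ using that homotopy is preserved under pre- and post-composition. For $ir\simeq\id_Z$ I would take a good path object $Z\xrightarrow{\sim}Z^I\xrightarrow{(p_0,p_1)}Z\times Z$ and solve
\begin{equation*}
\xymatrix{A\ar[r]\ar[d]_-i & Z^I\ar[d]^-{(p_0,p_1)}\\ Z\ar[r]_-{(ir,\,\id_Z)} & Z\times Z}
\end{equation*}
with top map $A\xrightarrow{i}Z\xrightarrow{\sim}Z^I$; commutativity follows from $iri=i$, and a lift exists by (MC4)(1), realizing a right homotopy from $ir$ to $\id_Z$. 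Symmetrically, for $sp\simeq\id_Z$ I would take a good cylinder $Z\coprod Z\to Z\wedge I\xrightarrow{\sigma}Z$ and solve
\begin{equation*}
\xymatrix{Z\coprod Z\ar[r]^-{(sp,\,\id_Z)}\ar[d] & Z\ar[d]^-p\\ Z\wedge I\ar[r]_-{p\sigma} & X}
\end{equation*}
whose commutativity uses $psp=p$; a lift exists by (MC4)(2), realizing a left homotopy from $sp$ to $\id_Z$.

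For the converse, the key observation is that homotopic maps between fibrant-cofibrant objects share their weak-equivalence status: in a good cylinder factorization the inclusions $i_0,i_1$ both admit the weak equivalence $\sigma$ as a common left inverse and hence are themselves weak equivalences by (MC2), so a good left homotopy $H$ with $h_k=H i_k$ shows by (MC2) applied twice that $h_0$ is a weak equivalence iff $H$ is iff $h_1$ is. Applying this to $fg\simeq\id_X$ and $gf\simeq\id_A$ gives that both $fg$ and $gf$ are weak equivalences. Feeding these back through the already-established forward direction produces homotopy inverses of $fg$ and $gf$ which are themselves weak equivalences (by (MC2)), and a short chain of (MC2) applications then propagates the conclusion to $f$. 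The main obstacle is the bookkeeping in the lifts of the forward direction---assembling the squares so that commutativity is manifest (in particular the identities $iri=i$ and $psp=p$) and so that the lifts encode genuine left or right homotopies with the correct source and target---while the converse is conceptually easy but requires care in the final 2-out-of-3 manipulations.
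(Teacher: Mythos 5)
The paper does not actually supply a proof of Lemma \ref{w-c-f} (it is quoted as a standard fact, cf.\ Dwyer--Spalinski, Lemma 4.24), so there is nothing to compare line by line; judging your argument on its own terms, the forward direction is correct and is the standard one: factor $f=pi$, promote $p$ to an acyclic fibration by (MC2), check $Z$ is fibrant and cofibrant, produce the retraction $r$ and section $s$ by the two lifting axioms, and realize $ir\overset{r}{\sim}\id_Z$ and $sp\overset{l}{\sim}\id_Z$ by lifting into a good path object, resp.\ out of a good cylinder. The only gloss there is the appeal to preservation of homotopies under pre- and post-composition, which is legitimate because $A$, $X$, $Z$ are all fibrant and cofibrant so left and right homotopy coincide and compose.

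The converse, however, has a genuine gap at its last step. From $fg\simeq\id_X$ and $gf\simeq\id_A$ you correctly conclude that $fg$ and $gf$ are weak equivalences, but the claim that ``a short chain of (MC2) applications then propagates the conclusion to $f$'' is exactly the two-out-of-six property, which does \emph{not} follow from two-out-of-three by any formal chain of applications: every instance of (MC2) you can form ($(f,g)$, $(g,f)$, $(gf,f)$, \dots) requires already knowing that $f$ or $g$ individually is a weak equivalence. (And feeding $fg$, $gf$ back through the forward direction yields nothing, since these maps are already homotopic to identities, so their homotopy inverses are identities up to homotopy.) The standard repair uses the retract axiom (MC3), which your sketch never invokes: factor $f=pq$ with $q$ an acyclic cofibration and $p$ a fibration, so it suffices to show $p$ is a weak equivalence; lift a good homotopy $H:fg\simeq\id_X$ through the fibration $p$ starting from $qg$ to obtain $s:X\to C$ with $ps=\id_X$ and $s\overset{l}{\sim}qg$; deduce $sp\overset{l}{\sim}\id_C$ (using that $q$, being an acyclic cofibration between fibrant--cofibrant objects, already has a homotopy inverse by the forward direction), hence $sp$ is a weak equivalence; and finally observe that the identities $ps=\id_X$ and $p(sp)=(ps)p=p$ exhibit $p$ as a retract of $sp$, so $p$ is a weak equivalence by (MC3). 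Without this (or an equivalent) argument the converse is not established.
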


Putting the last three lemmas together, one can make the following definition:

\begin{definition}  The \emph{homotopy category} $\Ho(\c)$ of a model category $\c$  has the same objects as $\c$ and the morphism set $\Hom_{\Ho(\c)}(A,B)$ consists of the (right or left) homotopy classes of  the morphisms in $\Hom_{\c}(RQA,RQB)$. Note that since $RQA$ and $RQB$ are fibrant and cofibrant, the left and right homotopy relations are the same.  There is a natural functor
$H_{\c}: \c\rightarrow \Ho(\c)$ which is the identity on the objects and sends a morphism $f:A\rightarrow B$ to the homotopy class of the morphism
obtained in $\Hom_{\c}(RQ A,RQ B)$ by applying consecutively Lemma~\ref{lift} and Lemma~\ref{desc}.
\end{definition}

\subsubsection*{Localization functor.}  Here we give a brief conceptual description of the homotopy category of a model category. This description relies only on the class of weak equivalences and suggests that weak equivalences encode most of the homotopic properties of the category. Let $W$ be a subset of  the morphisms in a category $\c$. A functor $F:\c\rightarrow \mathbf{D}$ is said to be a \emph{localization of $\c$ with respect to $W$} if the elements of $W$ are sent to isomorphisms and if $F$ is universal for this property, i.e. if $G:\c \rightarrow \mathbf{D'}$ is any another localizing functor then $G$ factors through $F$ via a functor $G':\mathbf{D}\rightarrow \mathbf{D'}$ for which $G'F=G$. It follows from Lemma~\ref{w-c-f} and a little work that:

\begin{theorem} For a model category $\c$, the natural functor $H_{\c}: \c\rightarrow \Ho(\c)$ is a localization of $\c$ with respect to the weak equivalences.
\end{theorem}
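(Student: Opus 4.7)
The plan is to verify the two defining properties of a localization functor: first, that $H_\c$ sends weak equivalences to isomorphisms in $\Ho(\c)$; second, that every functor $G\colon\c\to\d$ inverting weak equivalences factors uniquely through $H_\c$.

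For the first property, I would start with a weak equivalence $f\colon A\to B$ and apply Lemma \ref{lift} followed by Lemma \ref{desc} to obtain a weak equivalence $\tilde f=R(Qf)\colon RQA\to RQB$ between objects that are both fibrant and cofibrant (by Remark \ref{remk-fib-cofib}). Lemma \ref{w-c-f} then furnishes a homotopy inverse $g\colon RQB\to RQA$, so $[g]$ is a two-sided inverse of $H_\c(f)=[\tilde f]$ in $\Ho(\c)$.

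For universality, I would set $\overline{G}(A):=G(A)$ on objects, and use that the weak equivalences $p_A\colon QA\to A$ and $i_{QA}\colon QA\to RQA$ yield an isomorphism $\eta_A:=G(i_{QA})\circ G(p_A)^{-1}\colon G(A)\to G(RQA)$ under $G$. For a morphism $[\phi]\in\Hom_{\Ho(\c)}(A,B)$ represented by $\phi\colon RQA\to RQB$, I would then define
$$
\overline{G}([\phi]):=\eta_B^{-1}\circ G(\phi)\circ\eta_A.
$$
To see this is well-defined on homotopy classes, I would use a good cylinder object with structural weak equivalence $s\colon RQA\wedge I\to RQA$: since $s\circ i_0=s\circ i_1=\id$ and $G(s)$ is invertible, any left homotopy $H$ from $\phi$ to $\phi'$ forces $G(\phi)=G(H)G(i_0)=G(H)G(i_1)=G(\phi')$. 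Functoriality of $\overline{G}$ is then immediate since the $\eta$'s telescope under composition, and the identity $\overline{G}\circ H_\c=G$ drops out by expanding $H_\c(f)=[R(Qf)]$ and applying the intertwining relations $G(R(Qf))\circ G(i_{QA})=G(i_{QB})\circ G(Qf)$ and $G(p_B)\circ G(Qf)=G(f)\circ G(p_A)$ from Lemmas \ref{lift} and \ref{desc}.

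For uniqueness, any other $\overline{G}'$ satisfying $\overline{G}'\circ H_\c=G$ must agree with $\overline{G}$ on morphisms $H_\c(f)$ and on inverses $H_\c(f)^{-1}$ when $f$ is a weak equivalence, and the zigzag
$$
A\xleftarrow{p_A}QA\xrightarrow{i_{QA}}RQA\xrightarrow{\phi}RQB\xleftarrow{i_{QB}}QB\xrightarrow{p_B}B
$$
expresses an arbitrary $[\phi]$ in $\Ho(\c)$ as $H_\c(p_B)\circ H_\c(i_{QB})^{-1}\circ H_\c(\phi)\circ H_\c(i_{QA})\circ H_\c(p_A)^{-1}$. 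The hardest step will be rigorously justifying this zigzag identity purely from the definition of $\Ho(\c)$: one must verify, by tracking the canonical choices $Q(QA)=QA$, $R(RQA)=RQA$ and the fact that a morphism between fibrant-cofibrant objects represents its own $R\circ Q$-lift up to homotopy, that the stated composite indeed equals $[\phi]$ in $\Ho(\c)$. Once this identity is in place, uniqueness of $\overline{G}$ and hence the universal property of $H_\c$ follow.
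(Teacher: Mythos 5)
Your argument is correct, and in fact it supplies a proof that the paper does not give: the text merely asserts that the theorem ``follows from Lemma \ref{w-c-f} and a little work,'' so there is no proof in the paper to compare against. What you have written is the standard Quillen/Dwyer--Spalinski argument, and all the essential points are in place: weak equivalences go to isomorphisms because $R(Qf)$ is a weak equivalence between fibrant--cofibrant objects (Lemmas \ref{lift}, \ref{desc} and Remark \ref{remk-fib-cofib}) and Lemma \ref{w-c-f} then gives a homotopy inverse; the factorization $\overline{G}$ is forced to be $\eta_B^{-1}G(\phi)\eta_A$, and your well-definedness argument via a good cylinder object (using $s\circ i_0=s\circ i_1=\id$ and invertibility of $G(s)$) is exactly right. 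The intertwining computation showing $\overline{G}\circ H_{\c}=G$ also shows, as a byproduct, that $G(Qf)=G(p_B)^{-1}G(f)G(p_A)$ is independent of the choice of lift, which quietly disposes of the ambiguity in Lemma \ref{lift}. The one step you leave open --- the zigzag identity expressing an arbitrary $[\phi]$ as a composite of images of $H_{\c}$ and inverses of such --- is indeed where the remaining bookkeeping lives, and you have correctly identified what it reduces to: with the canonical choices $Q(QA)=QA$ and $R$ of a fibrant object equal to itself, each of $H_{\c}(p_A)$, $H_{\c}(i_{QA})$, $H_{\c}(i_{QB})$, $H_{\c}(p_B)$ becomes the class of a map homotopic to the identity of the relevant $RQ$-object, so the composite collapses to $[\phi]$. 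Carrying that check out explicitly would complete the proof; nothing in your outline would fail.
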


\subsubsection*{Derived and total derived functors.}

In this section we introduce the notions of \emph{left derived functor} $LF$  and  \emph{right derived functor} $RF$ of a functor $F:\c\rightarrow \d$ on a model category $\c$. In particular, we spell out sufficient conditions for the existence of $LF$ and $RF$ which provide us a factorization of $F$ via the homotopy categories. If $\d$ happens to be a model category,  then we also introduce the notion of \emph{total derived functor} and provide some sufficient conditions for its existence. 

All functors considered here are covariant, however see Remark~\ref{rem-op}.

\begin{definition} For a functor $F:\c\rightarrow \d$ on a model category $\c$, we consider all pairs $(G,s)$ where $G:\Ho(\c)\rightarrow\d$ is a functor and $s:G H_{\c}\rightarrow F$ is a natural transformation. The \emph{left derived functor} of $F$ is such a pair $(LF,t)$ which is universal from the left, i.e. for any other such pair $(G,s)$ there is a unique natural transformation $t': G\rightarrow LF$ such that $t(t'H_\c):GH_{\c}\rightarrow F$ is $s$.

Similarly one can define the right derived functor $RF: \Ho(\c) \rightarrow \d$ which provides a factorization of $F$ and satisfies the usual universal property from the right. A \emph{right derived functor} of $F$ is a pair $(RF,t)$ where $RF:\Ho(\c)\rightarrow \d$ and $t$ is a natural transformation $t: F \rightarrow RFH_{\c} $ such that for any pair $(G,s)$ there is a unique natural transformation $t':RF\rightarrow G$ such that $(t'H_\c) t:F \rightarrow GH_{\c}$ is $s$.
\end{definition}

The reader can easily check that the derived functors of $F$ are unique up to canonical equivalence.
The following result tells us when do derived functors exist.

\begin{proposition}\label{prop-derived}
\begin{numlist}
\item Suppose that $F:\c\rightarrow \d$ is a functor from a model category $\c$ to a category $\d$, which transforms acyclic cofibrations between cofibrant objects into isomorphims. Then $(LF,t)$, the left derived functor of $F$, exists. Moreover, for any cofibrant object $X$ the map $t_X:LF(X) \rightarrow F(X)$ is an isomorphism.

\item Suppose that $F:\c\rightarrow \d$ is a functor from a model category $\c$ to a category $\d$, which transforms acyclic fibrations between fibrant objects into isomorphisms. Then $(RF,t)$, the right derived functor of $F$, exists. Moreover, for all fibrant object $X$ the map $t_X: RF(X) \rightarrow F(X)  $ is an isomorphism.

\end{numlist}
\end{proposition}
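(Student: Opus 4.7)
The plan is to prove the left derived functor case, with the right derived functor case following by a completely dual argument.

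The first step, and the key technical input, is to establish \emph{Ken Brown's Lemma}: $F$ actually sends \emph{every} weak equivalence between cofibrant objects to an isomorphism, not just the acyclic cofibrations. Given a weak equivalence $f\colon A \rightarrow B$ with $A, B$ cofibrant, I would apply (MC5) to the map $(f,\id_B)\colon A\coprod B \rightarrow B$ to factor it as a cofibration $j\colon A\coprod B \rightarrow C$ followed by an acyclic fibration $q\colon C\rightarrow B$. Composing $j$ with the two coproduct inclusions yields maps $A\rightarrow C$ and $B\rightarrow C$ which, by (MC2), are weak equivalences, and which are cofibrations since $A,B$ are cofibrant (so $A\coprod B$ is cofibrant, and cofibrations are stable under pushout along $A\rightarrow A\coprod B$, $B\rightarrow A\coprod B$). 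Thus both are acyclic cofibrations between cofibrant objects, so $F$ sends them to isomorphisms; since $q$ post-composed with the inclusion $B\rightarrow C$ is the identity on $B$, we get $F(q)$ invertible as well, and $F(f)$ factors as a composition of isomorphisms.

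Next I would define $LF$ on objects by $LF(X):=F(QX)$ using the cofibrant replacement $p_X\colon QX\xrightarrow{\sim} X$, and on morphisms by $LF(f):=F(\tilde f)$ where $\tilde f\colon QX\rightarrow QY$ is the lift produced by Lemma \ref{lift}. The main thing to verify is that this is well-defined on the homotopy category. By Lemma \ref{lift}, $\tilde f$ is unique up to left homotopy, so I need $F$ to identify left-homotopic maps between cofibrant objects. Given a good cylinder object $QX\coprod QX\rightarrow QX\wedge I \xrightarrow{\sim} QX$, the object $QX\wedge I$ is cofibrant (as a cofibration from the cofibrant $QX\coprod QX$), and the two inclusions $i_0,i_1\colon QX\rightarrow QX\wedge I$ are weak equivalences between cofibrant objects by (MC2). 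Ken Brown's lemma then gives $F(i_0)=F(i_1)$ (both are right inverses to the isomorphism $F(QX\wedge I\xrightarrow{\sim}QX)$), so any left homotopy $H\colon QX\wedge I\rightarrow QY$ yields $F(H\circ i_0)=F(H\circ i_1)$. Functoriality of $LF$ then follows because a composition of lifts of $f$ and $g$ is a lift of $g\circ f$, unique up to homotopy.

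To exhibit the natural transformation, define $t_X\colon LF(X)=F(QX)\rightarrow F(X)$ by $t_X:=F(p_X)$; naturality follows from the commutative square of Lemma \ref{lift}. When $X$ is cofibrant we may take $QX=X$, or more canonically note that $p_X$ is a weak equivalence between cofibrant objects, and Ken Brown's lemma gives that $t_X$ is an isomorphism, proving the final claim. The universal property is then formal: given any pair $(G,s)$ with $s\colon GH_\c\rightarrow F$, the required natural transformation $t'\colon G\rightarrow LF$ must satisfy, on an object $X$, $t_X\circ t'_X = s_X$; because $QX$ is cofibrant the component $s_{QX}\colon G(QX)\rightarrow F(QX)=LF(X)$ is already the candidate, and composition with $G(p_X)^{-1}$ (which exists in $\d$ only after noticing that $H_\c(p_X)$ is an isomorphism in $\Ho(\c)$, hence $G(p_X)$ is an isomorphism in $\d$) gives the unique $t'_X$; naturality and uniqueness are then straightforward diagram chases.

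The main obstacle is the well-definedness of $LF$ on morphisms, which hinges entirely on promoting the hypothesis (isomorphisms on acyclic cofibrations between cofibrant objects) to the statement of Ken Brown's lemma (isomorphisms on all weak equivalences between cofibrant objects); once that is in hand, the rest is routine bookkeeping with the cofibrant replacement and the homotopy relations developed in Lemmas \ref{equi-rel}--\ref{w-c-f}. The right derived case is entirely dual, using path objects and fibrant replacements in place of cylinders and cofibrant replacements.
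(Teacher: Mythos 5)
The paper itself gives no proof of this proposition; it is quoted as a standard result from the references (\cite{Qui}, \cite{Hov}, \cite{DwSpa}), and your argument is precisely the classical proof of that result. The structure is correct: promoting the hypothesis to Ken Brown's lemma via the factorization of $(f,\id_B)\colon A\coprod B\rightarrow B$, setting $LF(X)=F(QX)$, checking that $F$ identifies left-homotopic maps out of cofibrant objects by means of a good cylinder object, and then the formal verification of the natural transformation and its universal property. Two small points. First, for a \emph{good} cylinder object the maps $i_0,i_1\colon QX\rightarrow QX\wedge I$ are themselves acyclic cofibrations between cofibrant objects (since $in_0\colon QX\rightarrow QX\coprod QX$ is a pushout of the cofibration $\emptyset\rightarrow QX$), so the hypothesis applies to them directly and Ken Brown is not strictly needed there; where Ken Brown is genuinely indispensable is in observing that $X\mapsto F(QX)$, $f\mapsto F(\tilde f)$ sends \emph{every} weak equivalence of $\c$ to an isomorphism, which is what allows this functor to descend along the localization $H_{\c}\colon\c\rightarrow\Ho(\c)$ — you should state that descent explicitly, since with the paper's definition of $\Ho(\c)$ a morphism $X\rightarrow Y$ is a homotopy class of maps $RQX\rightarrow RQY$ rather than a morphism of $\c$, and the cleanest way to get $LF$ as a functor on $\Ho(\c)$ is via the universal property of the localization recorded in the paper. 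Second, in the Ken Brown step the relevant identity is $q\circ j_B=\id_B$ (the inclusion of $B$ followed by $q$), not the other composition order; this is only a wording slip, not a gap.
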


\begin{definition} Let $F:\c\rightarrow \d$ be a functor between two model categories.  The \emph{total left derived functor} $\L F:\Ho(\c)\rightarrow \Ho(\d)$ is the left derived functor of $H_{\d} F: \c\rightarrow \Ho(\d)$. Similarly one defines the \emph{total right derived functor} $\R F:\c\rightarrow \d$ to be the right derived functor of $H_{\d} F: \c\rightarrow \Ho(\d)$.

\end{definition}

\begin{remark}\label{rem-op}
Till now we have defined and discussed the derived functor for covariant functors. We can define the derived functors for contravariant functors as well, for that we only have to work with the opposite category of the source of the functor. A morphism $A\rightarrow B$ in the opposite category is a cofibration (resp. fibration, weak equivalence) if and only if the corresponding morphism $B\rightarrow A$ is a fibration (resp. cofibration, weak equivalence).
\end{remark}

We finish this section with an example.

\medskip

\noindent \textbf{Example 4:} Consider the model category $\ch(R)$ of Example 1 in Section~\ref{sec:derived} and let $M$ be a fixed $R$-module. One defines the functor $F_M:\ch(R)\rightarrow \ch(\Z)$ given by $F_M(N_*)=M\otimes_R N_*$ where $N_*\in \ch(R)$ is a complex of $R$-modules. Let us check that $F=H_{\ch(R)}F_M:\ch(R)\rightarrow \ch(\Z)$ satisfies the conditions of Proposition~\ref{prop-derived}.

Note that in $\ch(R)$ every object is fibrant and a complex $A_*$ is cofibrant if for all $k$, $A_k$ is a projective $R$-module. We have to show that
an acyclic cofibration $f: A_*\rightarrow B_*$ between cofibrant objects $A$ and $B$ is sent by $F$ to an isomorphism. So for all $k$, we have a short exact sequence $0\rightarrow A_*\rightarrow B_*\rightarrow B_*/A_*\rightarrow 0$ where for all $k$, $B_k/A_k$ is also projective.
Since $f$ is a quasi-isomorphism the homology long exact sequence of this short exact sequence tells us that the complex $B_*/A_*$ is acyclic.
The lemma below shows that $ B_*/A_*$ is in fact a projective complex. Therefore we have $B_*\simeq A_*\oplus B_*/A_*$. So $F_M(B_*)\simeq F_M(A_*)\oplus F_M(B_*/A_*)\simeq F_M(A_*)\oplus \bigoplus_n F_M(D(Z_{n-1}(B_*/A_*),n))$. Here $Z_*(X_*):= \ker (d: X_*\to X_{*+1})$ stands for the graded module of the cycles in a given complex $X_*$, and the complex $D(X,n)_*$ is defined as follows: To any $R$-module $X$ and a positive integer $n$, one can associate a complex $\{D(X,n)_k\}_{k\geq 0}$,
\begin{equation*}
D(X,n)_k=\begin{cases} 0, \text{ if } k\neq n,n-1,\\
X, \text{ if } k=n,n-1,
\end{cases}
\end{equation*}
where the only nontrivial differential is the identity map.

It is a direct check that each $F_M(D(Z_{n-1}(B_*/A_*),n))$ is acyclic,
and therefore $H_{\ch(\Z)}(F_M(B))$ is isomorphic to $H_{\ch(\Z)}(F_M(A))$ in the homotopy category $\Ho(\ch(\Z))$.

\begin{lemma} Let $\{C_k\}_{k\geq 0}$ be an acyclic complex where each $C_k$ is a projective $R$-module. Then $\{C_k\}_{k\geq 0}$ is a projective complex, i.e. any level-wise surjective chain complex map $D_*\rightarrow E_*$ can be lifted via any chain complex map $C_*\rightarrow E_*$.

\end{lemma}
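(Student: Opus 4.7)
The plan is to establish a structural decomposition of $C_*$ and reduce projectivity of $C_*$ to projectivity of its summands. Specifically, I would show that $C_*$ is isomorphic as a chain complex to a direct sum of disk complexes $\bigoplus_{n\geq 1} D(Z_{n-1},n)$, where $Z_n := \ker(d_n)$ denotes the cycles in degree $n$. Once this is known, it suffices to verify that each $D(X,n)$ with $X$ a projective $R$-module is projective in $\ch(R)$, since arbitrary direct sums of projective chain complexes are again projective.

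First I would prove by induction on $n\geq 0$ that the cycle module $Z_n$ is a projective $R$-module. The base case is immediate: in the non-negatively graded setting $Z_0 = C_0$, which is projective by hypothesis. For the inductive step, acyclicity of $C_*$ gives the identification $\im(d_n) = Z_{n-1}$, so that the natural short exact sequence
\[
0 \to Z_n \to C_n \xrightarrow{d_n} Z_{n-1} \to 0
\]
has projective right-hand term (by the inductive hypothesis) and therefore splits. Consequently $Z_n$ appears as a direct summand of the projective module $C_n$ and is itself projective.

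Next, a choice of splitting $\sigma_n : Z_{n-1} \to C_n$ of the sequence above determines a chain map $D(Z_{n-1},n) \to C_*$ given by $\sigma_n$ in degree $n$ and by $d_n \circ \sigma_n = \id_{Z_{n-1}}$ in degree $n-1$. Assembling these maps for all $n \geq 1$ yields the desired isomorphism $C_* \cong \bigoplus_{n\geq 1} D(Z_{n-1},n)$; in degree $k$ both sides are naturally isomorphic to $Z_k \oplus Z_{k-1}$, and the differentials match by construction of the splitting. To finish, I would observe that a chain map $D(X,n) \to E_*$ is completely determined by a single $R$-linear map $X \to E_n$ (its degree-$(n-1)$ component being forced by the differential); given any level-wise surjection $p : D_* \to E_*$, such a map lifts through $p_n$ by projectivity of $X$. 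Hence $D(X,n)$ is a projective object of $\ch(R)$, and the lemma follows.

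The only genuinely delicate point is the inductive step, where acyclicity must be invoked precisely to identify $C_n/Z_n$ with $Z_{n-1}$ so that the inductive hypothesis applies; every other step is purely formal bookkeeping with the splittings.
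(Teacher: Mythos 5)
Your proof is correct and follows essentially the same route as the paper: both decompose $C_*$ as the direct sum $\bigoplus_{n\geq 1} D(Z_{n-1},n)$ of disk complexes on the cycle modules, using acyclicity to identify $C_n/Z_n$ with $Z_{n-1}$ and then splitting to show each $Z_n$ is projective. Your write-up merely makes explicit the induction on projectivity of the $Z_n$ and the splittings $\sigma_n$, which the paper handles by iteratively peeling off $D(Z_{m-1},m)$ from the truncated subcomplexes $C^{(m)}$.
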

\begin{proof}
It is easy to check that if $X$ is a projective $R$-module then $ D_n(X)$ is a projective complex. Let $C_*^{(m)}$ be the complex

\begin{equation*}
C_k^{(m)}=\begin{cases} C_k, \text{ if } k\geq m,\\
Z_k(C), \text{ if } k=m-1,\\
0 \text{ otherwise. }
\end{cases}
\end{equation*}
Here $Z_k(C)$ denotes the space of cycles in $C_k$, and $B_k(C)$ is the space of boundary elements in $C_k$. The acyclicity condition implies that we have an isomorphism $C_*^{(m)}/C_*^{(m+1)}\simeq D(Z_{m-1}(C),m)$. Note that $Z_0(C)=C_0$ is a projective $R$-module and $C_*=C^{(1)}=C^{(2)}\oplus D_1(Z_0(C))$.
Now $D_1(Z_0(C))$ is a projective complex and $C^{(2)}$ also satisfies the assumption of the lemma and vanishes in degree zero. Therefore by applying the same argument one sees that $C^{(2)}=C^{(3)}\oplus D(Z_1(C),2)$. Continuing this process one obtains $C_*=D(Z_0(C),1)\oplus D(Z_1(C),2)\cdots \oplus D(Z_{k-1},k)\oplus\cdots $ where each factor is a projective complex, thus proving the statement.
\end{proof}

We finish this example by computing the left derived functor. For any $R$-module $N$ let $K(N,0)$ be the chain complex concentrated in degree zero where there is a copy of $N$. Since every object is fibrant, a fibrant-cofibrant replacement of $K(N,0)$ is simply a cofibrant replacement. A cofibrant replacement $P_*$ of $K(N,0)$ is exactly a projective resolution (in the usual sense) of $N$ in the category of $R$-modules.  In the homotopy category of $\ch(R)$, $K(N,0)$ and $P$ are isomorphic because by definition
$
\Hom_{Ho(\ch(R))}(K(N,0),P)
$
consists of  the homotopy classes of $\Hom_{\ch(R)}(RQK(N,0),RQP_*)=\Hom_{\ch(R)}(P_*,P_*)$ which contains the identity map. Therefore $\L F(K(N,0))\simeq \L F (P_*)$  and $ \L F (P_*)$ by Proposition~\ref{prop-derived} and the definition of total derived functor is isomorphic to $H_{\ch(R)}F(P_*)=M\otimes_R P_*$.
In particular,
$$
H_* (\L F(K(N,0))= \Tor_*^R(N,M),
$$
where $\Tor^R_*$ is the usual $\Tor_R$ in homological algebra. We usually denote the derived functor $\L F(N)=N\otimes_R^L M$. 
Similarly one can prove that the contravariant functor $N_*\mapsto \Hom_R(N_*, M)$ has a total right derived functor, denoted by $\RHom_R (N_*,M)$, and
$$
H^*(\RHom_R(K(N,0),M))\simeq \Ext^*_R(N,M)
$$
is just the usual $\Ext$ functor (see Remark~\ref{rem-op}).

\bigskip

\subsection{Hinich's theorem and Derived category of DG modules}
The purpose of this section is to introduce a model category and derived functors of DG-modules over a fixed differential graded $\kk$-algebra. From now on we assume that $\kk$ is a field. The main result is essentially due to Hinich~\cite{Hinich}, who introduced a model category structure for algebras over a vast class of operads.

Let $C(\kk)$ be the category of (unbounded) complexes over $\kk$. For $d\in \Z$ let  $M_d\in C(\kk)$ be the complex
$$
\cdots \rightarrow 0\rightarrow \kk=\kk\rightarrow  0\rightarrow 0\cdots
$$
concentrated in degrees $d$ and $d+1$.

\begin{theorem}(Hinich)\label{hinich}  Let $\c$ be a category which admits finite limits and arbitrary colimits and is endowed with
two right and left adjoint functors $(\#,F)$
\begin{equation}
\#: C\rightleftarrows C(\kk):F
\end{equation}
such that for all $A \in Obj(A)$ the canonical map $A\rightarrow A\coprod F(M_d)$ induces a quasi-isomorphism $A^\#
\rightarrow (A
\coprod F(M_d))^{\#}$. Then there is a model category structure on $\c$ where the three distinct classes of morphisms are:
\begin{numlist}
\item Weak equivalences $\cW$: $f\in Mor(\c)$ is in $\cW$ if $f^\#$ is a quasi-isomorphism.
\item Fibrations $\cF$: $f\in Mor(\c)$ is in $\cF$ if $f^\#$ is (componentwise) surjective.
\item Cofibrations $\cC$: $f\in Mor(\c)$ is a cofibration if it satisfies the LLP property with respect to all acyclic fibrations $\cW\cap \cF$.
\end{numlist}
\end{theorem}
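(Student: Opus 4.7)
The plan is to transfer the projective model structure on $C(\kk)$ to $\c$ via a variant of Quillen's small object argument, taking as generating (trivial) cofibrations the images under $F$ of those of $C(\kk)$. Recall that the projective structure on $C(\kk)$ is cofibrantly generated by the inclusions $I = \{S^{n-1} \hookrightarrow M_{n-1} : n \in \Z\}$, where $S^{n-1}$ denotes $\kk$ concentrated in degree $n-1$, together with $J = \{0 \to M_d : d \in \Z\}$. Put $FI := \{F(S^{n-1}) \to F(M_{n-1})\}$ and $FJ := \{F(0) \to F(M_d)\}$. The easy axioms are immediate: (MC1) is assumed; (MC2) follows since $(\cdot)^\#$ preserves composition and quasi-isomorphisms in $C(\kk)$ satisfy 2-out-of-3; (MC3) transports to $\cF, \cW$ directly and holds for $\cC$ by the standard retract-closure of classes defined by a left lifting property.

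By the adjunction $F \dashv \#$, a morphism $p$ in $\c$ has RLP against $FI$ (resp.\ $FJ$) iff $p^\#$ has RLP against $I$ (resp.\ $J$), which is equivalent to $p^\#$ being a trivial fibration (resp.\ a fibration) in $C(\kk)$. Hence the class of maps with RLP against $FI$ is $\cW \cap \cF$ and the class with RLP against $FJ$ is $\cF$. Quillen's small object argument then factors any $f: X \to Y$ as $X \to Z \to Y$ with $X \to Z$ a transfinite composition of pushouts of maps in $FI$ (resp.\ $FJ$) and $Z \to Y$ in $\cW \cap \cF$ (resp.\ $\cF$); the required smallness is clear since the $M_d$ are bounded, finite-dimensional complexes. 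The cell complexes $X \to Z$ automatically have LLP against the corresponding right class, and in particular $FI$-cells are cofibrations.

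The crucial point, and the main obstacle, is to show that an $FJ$-cell $X \to Z$ is a weak equivalence. This is exactly where the hypothesis enters: since $F$ is a left adjoint, $F(0)$ is initial in $\c$, so every pushout of $F(0) \to F(M_d)$ along an attaching map $F(0) \to A$ has the form $A \to A \coprod F(M_d)$, which is a weak equivalence by hypothesis. Transfinite compositions of such pushouts remain weak equivalences because $\#$ preserves the filtered colimits in question (a mild additional assumption, automatic in the operadic examples of interest) and quasi-isomorphisms in $C(\kk)$ are stable under filtered colimits since homology commutes with them. This yields the $(\cC \cap \cW, \cF)$ factorization; the $(\cC, \cF \cap \cW)$ factorization follows at once from the $FI$-side of the argument.

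For (MC4), one lifting direction is built into the definition of cofibrations. For the other, given a trivial cofibration $i: A \to B$, factor $i = qj$ with $j$ an $FJ$-cell and $q \in \cF$; by (MC2), $q$ is a trivial fibration, so $i$ lifts against $q$, exhibiting $i$ as a retract of $j$. Since $j$ has LLP against all fibrations by construction, so does $i$, completing the verification of the model category axioms.
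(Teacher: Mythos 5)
The paper gives no proof of this theorem --- it is simply quoted from Hinich's article --- so there is nothing internal to compare against; but your argument is precisely the standard transfer-of-model-structure proof that Hinich himself gives, and it is correct in all essentials: define $\cW$ and $\cF$ by pulling back along $\#$, take $FI$ and $FJ$ as generating (trivial) cofibrations, use the adjunction $F\dashv\#$ to identify the $FI$- and $FJ$-injectives with $\cW\cap\cF$ and $\cF$ respectively, run the small object argument, invoke the hypothesis on $A\to A\coprod F(M_d)$ (together with the fact that $F(0)$ is initial) to see that $FJ$-cells are weak equivalences, and finish (MC4) by the retract trick. Two small points deserve mention. First, an indexing slip: with the paper's convention that $M_{n-1}$ is concentrated in degrees $n-1$ and $n$ with identity differential, the only chain map from the sphere $S^{n-1}$ to $M_{n-1}$ is zero (the differential of $M_{n-1}$ has no nonzero cycles in degree $n-1$), so the generating cofibrations should be written $S^{n}\hookrightarrow M_{n-1}$. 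Second, you are right to flag that passing from single pushouts to transfinite compositions requires $\#$ to commute with the relevant filtered colimits --- both for the smallness of the domains $F(S^n)$, $F(0)$ and to conclude that a transfinite composition of quasi-isomorphisms is one; this hypothesis is part of Hinich's actual setting (the forgetful functor from operad algebras preserves filtered colimits) but is missing from the statement as reproduced here, so your parenthetical caveat points to a gap in the theorem as stated rather than a defect of your proof.
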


As an application of Hinich's theorem, one obtains a model category structure on the category $Mod(A)$ of
(left) differential graded modules over a differential graded algebra $A$. Here $\#$ is the forgetful functor and $F$ is given by tensoring $F(M)= A\otimes_\kk M$.

\begin{corollary}
The category $Mod(A)$ of DG $A$-modules is endowed with a model category structure where
\begin{romlist}
\item weak equivalences are the quasi-isomorphisms.
\item fibrations are level-wise surjections. Therefore all objects are fibrant.
\item cofibrations are the maps that have the left lifting property with respect to all acyclic fibrations.
\end{romlist}

\end{corollary}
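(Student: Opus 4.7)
The plan is to deduce the corollary directly from Hinich's theorem by exhibiting the required adjunction and verifying its single non-formal hypothesis.

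First I would set up the adjoint pair. Let $\# \colon Mod(A) \to C(\kk)$ be the forgetful functor, and let $F \colon C(\kk) \to Mod(A)$ be the extension of scalars $F(V) = A \otimes_\kk V$ with the outer $A$-action. The usual natural bijection
\begin{equation*}
\Hom_{Mod(A)}(A \otimes_\kk V,\, N) \;\cong\; \Hom_{C(\kk)}(V,\, N^{\#}),
\end{equation*}
sending $\varphi$ to $v \mapsto \varphi(1 \otimes v)$, exhibits $F$ as left adjoint to $\#$, and both functors are obviously compatible with the internal differentials. I would also note, as a preliminary, that $Mod(A)$ admits arbitrary colimits and finite limits: they are constructed at the level of underlying $\kk$-complexes (the forgetful functor $\#$ preserves limits automatically as a right adjoint, and colimits in $Mod(A)$ are formed by taking the colimit of underlying complexes with the module structure induced by universality).

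Second, the key point is to verify Hinich's acyclicity hypothesis, namely that for every $N \in Mod(A)$ and every $d \in \Z$, the canonical map $N \to N \coprod F(M_d)$ becomes a quasi-isomorphism after applying $\#$. Since coproducts in $Mod(A)$ are direct sums, this coproduct is just $N \oplus (A \otimes_\kk M_d)$, and the map in question is the inclusion of the first summand. After forgetting the $A$-action we obtain
\begin{equation*}
N^{\#} \;\longhookrightarrow\; N^{\#} \oplus \bigl(A^{\#} \otimes_\kk M_d\bigr).
\end{equation*}
Now $M_d$ is the acyclic two-term complex $\kk \xrightarrow{\ \id\ } \kk$ concentrated in degrees $d, d+1$, and tensoring over a field preserves acyclicity (Künneth, or directly: $A^\# \otimes_\kk M_d$ is the mapping cone of $\id_{A^\#}$ up to shift). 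Hence the second summand has vanishing homology and the inclusion is a quasi-isomorphism.

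With both hypotheses in place, Theorem \ref{hinich} applies and endows $Mod(A)$ with a model structure whose weak equivalences are morphisms $f$ with $f^\#$ a quasi-isomorphism, and whose fibrations are morphisms $f$ with $f^\#$ componentwise surjective; by definition of $\#$, these are precisely the quasi-isomorphisms of $A$-modules and the componentwise surjections of $A$-modules. The cofibrations are then forced, via (MC4)(ii), to be the maps with the left lifting property against acyclic fibrations. Since every object $N$ maps surjectively to the zero module, every object is fibrant, completing (i)--(iii). I do not foresee any serious obstacle: the only non-tautological step is the acyclicity check above, which is immediate because $\kk$ is a field and so tensoring with $A^\#$ is exact.
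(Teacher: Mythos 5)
Your proposal is correct and follows exactly the route the paper takes: the corollary is obtained by applying Hinich's theorem to the adjunction between the forgetful functor $\#$ and extension of scalars $F(V)=A\otimes_\kk V$, the paper simply leaving implicit the acyclicity check that you spell out (which is immediate since $M_d$ is acyclic and $\kk$ is a field). No gaps.
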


In what follows we give a description of cofibrations and cofibrant objects. An excellent reference for this part is~\cite{FT4}.

\begin{definition} An $A$-module $P$ is called a \emph{semi-free extension} of $M$ if $P$ is a union of an increasing family of $A$-modules
$M=P(-1)\subset P(0) \subset\cdots $ where each $P(k)/P(k-1)$ is a free $A$-modules generated by cycles. In particular $P$ is said to be a \emph{semi-free $A$-module} if it is a semi-free extension of the trivial module $0$.
A \emph{semi-free resolution of an $A$-module morphism} $f:M\rightarrow N $ is a semi-free extension $P$ of $M$
with a quasi-isomorphism $P\rightarrow N$ which extends $f$.

In particular a \emph{semi-free resolution of an $A$-module}  $M$ is a semi-free resolution of the trivial map $0\rightarrow M$.
\end{definition}

The notion of a semi-free module can be traced back to~\cite{GugMay}, and~\cite{Drinfeld} is another nice reference for the subject. A $\kk$-complex $(M,d)$ is called a \emph{semi-free complex} if it is semi-free as a differential $\kk$-module. Here $\kk$ is equipped with the trivial differential.  In the case of  a field $\kk$, every positively graded $\kk$-complex is semi-free. It is clear from the definition that a finitely generated semi-free $A$-module is obtained through a finite sequence of extensions of some free $A$-modules of the form $A[n]$, $n\in\Z$.  Here $A[n]$ is $A$ after a shift in degree by $-n$.

\begin{lemma}\label{lemm-filt}Let $M$ be an $A$-module with a filtration $F_0\subset F_1\subset F_2\cdots$ such that $F_0$ and all $F_{i+1}/F_i$ are semifree $A$-modules.   Then $M$ is semifree.
\end{lemma}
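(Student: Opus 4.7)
The plan is to concatenate the given semifree filtration of $F_0$ with lifts of the semifree filtrations of the successive quotients $F_{i+1}/F_i$, and then reindex the result to obtain an $\N$-indexed semifree filtration of $M$.

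First, for each $i\geq 0$ I would lift the given semifree filtration $\{H_i^{(j)}\}_{j\geq -1}$ of $F_{i+1}/F_i$ — whose consecutive quotients $H_i^{(j)}/H_i^{(j-1)}$ are free $A$-modules on cycles — to a filtration $\{G_i^{(j)}\}_{j\geq -1}$ of $F_{i+1}$ by taking preimages $G_i^{(j)} := \pi_i^{-1}(H_i^{(j)})$ under the projection $\pi_i \colon F_{i+1}\to F_{i+1}/F_i$. Then $G_i^{(-1)} = F_i$, $\bigcup_j G_i^{(j)} = F_{i+1}$, and the chain-level isomorphism $G_i^{(j)}/G_i^{(j-1)}\cong H_i^{(j)}/H_i^{(j-1)}$ shows each quotient is free on cycles; in particular every inclusion $F_i\subset F_{i+1}$ is a semifree extension. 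Stitching these together with the given semifree filtration $\{F_0^{(j)}\}$ of $F_0$ produces a totally ordered chain
$$0 = F_0^{(-1)} \subset F_0^{(0)} \subset \cdots \subset F_0 = G_0^{(-1)} \subset G_0^{(0)} \subset \cdots \subset F_1 = G_1^{(-1)} \subset G_1^{(0)} \subset \cdots$$
of sub-$A$-modules of $M$ with union $\bigcup_i F_i = M$ and each successive quotient a free $A$-module generated by cycles.

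Second, to recast this as an $\N$-indexed filtration $\{P(n)\}_{n\geq -1}$ as required by the definition, I would refine each free extension to adjoin one generator at a time, producing a well-ordered list of free generators of $M$. Since the differential of each lifted generator in $G_i^{(j)}/G_i^{(j-1)}$ lies in $G_i^{(j-1)}$, it involves only finitely many strictly earlier generators from the above chain. Choosing an enumeration $g_1, g_2, \ldots$ in which the finitely many generators appearing in $d g_n$ precede $g_n$ — done inductively by inserting any missing dependencies just before $g_n$ — and setting $P(n)$ to be the sub-$A$-module generated by $g_1, \ldots, g_n$ gives $dg_n \in P(n-1)$, so that $P(n)/P(n-1) = A\cdot \bar g_n$ is a free $A$-module on the cycle $\bar g_n$, and $\bigcup_n P(n) = M$.

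The main obstacle is combinatorial: the chain in the first step has natural order type $\omega^2$ rather than $\omega$, and the $\N$-reindexing in the second step must be arranged to respect the dependency structure imposed by the differentials. The point that makes it work is the finiteness of each generator's set of immediate dependencies, which is built into the notion of an $A$-module (finite linear combinations). In the (standard) case of countably generated modules this yields the required $\N$-indexed filtration; for the uncountably generated case one adopts the more flexible definition of semifree allowing a well-ordered filtration of any ordinal length, which is standard in the DG-algebra literature and suffices for all the homological arguments in the sequel.
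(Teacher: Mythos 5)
Your proposal is correct in substance and its first step coincides with the paper's: lift the semifree filtrations of the quotients $F_{i+1}/F_i$ to $F_{i+1}$, obtaining a chain of order type $\omega^2$ whose successive quotients are free on cycles, equivalently a system of free generators $\{z_\alpha\}$ each of whose differentials involves only finitely many strictly earlier generators. Where you diverge is the reindexing. You refine to a one-generator-at-a-time enumeration, which forces the countability discussion in your last paragraph and the appeal to a transfinite variant of the definition. The paper avoids this entirely by regrouping the generators by \emph{depth} rather than enumerating them: set $P_0$ to be the sub-$A$-module generated by those $z_\alpha$ with $dz_\alpha=0$, and inductively let $P_n$ be generated by those $z_\alpha$ with $dz_\alpha\in A\cdot P_{n-1}$. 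Each quotient $P_n/P_{n-1}$ is then free on the (possibly infinite) set of cycles given by the depth-$n$ generators --- the definition of semifree permits arbitrarily many generators per stage, so there is no need to adjoin them one at a time --- and $M=\bigcup_n P_n$ follows by transfinite induction along the $\omega^2$-ordered index set, using exactly the finiteness of dependencies you identify as the crux. This yields an $\N$-indexed filtration for modules of arbitrary cardinality, so your closing caveat about uncountably generated modules (and the change of definition it would require) is unnecessary; otherwise the two arguments rest on the same idea.
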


\begin{proof}
Since $F_k/F_{k-1}$ is semifree, it has a filtration $   \cdots P^k_l\subset P^k_{l+1}\cdots $ such that $P^k_l/ P^k_{l+1}$  is generated as an $(A,d)$-module by cycles. So one can write  $F_k/F_{k-1}=\oplus_l (A\otimes  Z'_k(l))$ where $Z'_k(l)$ are free (graded) $\kk$-modules such that $d(Z_k(l))\subset \oplus_{j\leq l} Z_k(j)$. Therefore there are free $\kk$-modules  $Z_k(l)$ such that $$F_k=F_{k-1}\bigoplus_{l\geq 0} Z'_k(l)$$ 
and
$$d(Z_k(l))\subset F_{k-1}\bigoplus_{j<l} A\otimes Z_k(j).$$

In particular $M$ is the free $\kk$-module generated by the union of all basis elements $\{z_\alpha\}$  of $Z_k(l)$'s. Now consider the filtration  $P_0\subset P_1 \cdots $ of free $\kk$-modules constructed inductively as follows:  $P_0$ is generates as $\kk$-module by the $z_\alpha$'s which are cycles, i.e.  $dz_\alpha=0$.  Then $P_k$ is generated by those $z_\alpha$'s such that $dz_\alpha \in A\cdot P_{k-1}$. This is clearly a semifree resolution if we prove that $M=\cup_k P_k$. For that, we show by induction on degree that for all $\alpha$, $z_\alpha$ belongs to some $P_k$. Suppose that $z_\alpha \in Z_k(l)$. Then $dz_\alpha \in \oplus A. Z_i(j)$ where $i<k$ or $i=k$ and $j<l$. By the induction hypothesis all $z_\beta$'s in the sum $dz_\alpha$ are in some $P_{m_\beta}$. Therefore $z_\alpha\in P_{m}$ where $m= \max_\beta m_\beta$ and this finishes the proof.
\end{proof}

\begin{remark}\label{rmk-mod}
If we had not assumed that $\kk$ is a field but only a commutative ring then we could still have put a model category structure on $Mod(A)$. This is a special case of the Schwede-Shipley theorem~\cite[Theorem 4.1]{SchShip}.  More details are provided on pages 503-504 of~\cite{SchShip}.
\end{remark}
\begin{proposition} In the model category of $A$-modules, a map$f:M\rightarrow N$  is a cofibration if and only if it is a retract of a semi-free extension $M\hookrightarrow P$. In particular, an $A$-module $M$ is cofibrant if and only if it is a retract of a semi-free $A$-module, i.e. if and only if it is a direct summand of a semi-free $A$-module.
\end{proposition}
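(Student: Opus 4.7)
The plan is to prove both directions using lifting arguments and the retract trick.

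For the ``if'' direction, I would first show that every semi-free extension $i \colon M \hookrightarrow P$ satisfies the LLP with respect to any acyclic fibration $p \colon X \to Y$, and hence is itself a cofibration. Given a commutative square with $\mu \colon M \to X$ and $\nu \colon P \to Y$ satisfying $\nu \circ i = p \circ \mu$, I would construct the lift $\phi \colon P \to X$ inductively along the filtration $M = P(-1) \subset P(0) \subset \cdots$. At stage $k$ one has $P(k) = P(k-1) \oplus (A \otimes V_k)$ as graded $A$-modules, where $V_k$ is a free graded $\kk$-module with basis elements $e_j$ whose differentials $de_j$ all lie in $P(k-1)$. Then $\phi(de_j) \in X$ is a cycle with $p(\phi(de_j)) = d\nu(e_j)$; since $p$ is a surjective quasi-isomorphism, its kernel is acyclic, and one can produce $x_j \in X$ with $dx_j = \phi(de_j)$ and $p(x_j) = \nu(e_j)$ (start from any surjective preimage of $\nu(e_j)$ and correct by a boundary in $\ker p$). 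Setting $\phi(e_j) := x_j$ and extending $A$-linearly finishes the induction. By (MC3), any retract of such an $i$ is also a cofibration.

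For the ``only if'' direction, I would apply (MC5)(ii) to factor a given cofibration $f \colon M \to N$ as $f = p \circ i$, with $i \colon M \hookrightarrow P$ a cofibration arising from Hinich's small-object construction and $p \colon P \to N$ an acyclic fibration. That construction builds $P$ from $M$ as a countable sequence of pushouts attaching free $A$-cells, each whose specified differential lies in the previously built module; combined with Lemma \ref{lemm-filt} this identifies $i$ as a semi-free extension of $M$. The LLP of $f$ against $p$ then supplies a section $s \colon N \to P$ with $s \circ f = i$ and $p \circ s = \id_N$, exhibiting $f$ as a retract of $i$ in the arrow category via
\[
\xymatrix{M \ar[r]^-{\id_M} \ar[d]_-f & M \ar[d]^-i \ar[r]^-{\id_M} & M \ar[d]^-f \\
N \ar[r]_-s & P \ar[r]_-p & N.}
\]

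The main obstacle is the bookkeeping in the second direction, namely verifying that the morphism $i$ produced by the small-object argument is genuinely a semi-free extension and not merely an abstract cofibration. This amounts to inspecting Hinich's generating cofibrations --- attachments of free $A$-cells with prescribed differential into the previous stage --- and reorganizing the resulting transfinite telescope into a filtration of the form required by Lemma \ref{lemm-filt}, at which point every subquotient is a free $A$-module generated by cycles. The ``in particular'' statement about cofibrant objects then follows by specializing to $M = 0$.
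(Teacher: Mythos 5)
The paper states this proposition without proof, deferring to \cite{FT4}, so there is no in-text argument to compare against; your proof is the standard one and is correct. The ``if'' direction (inductive lifting of a semi-free extension against a surjective quasi-isomorphism, using acyclicity of $\ker p$ to correct a preimage by a boundary, then (MC3) for retracts) and the ``only if'' direction (the retract argument applied to a factorization $f=p\circ i$ with $i$ a semi-free extension and $p$ an acyclic fibration) are exactly how the cited reference proceeds. The only point worth noting is that you do not need the full Hinich small-object machinery for the factorization: one can directly build a semi-free extension $M\hookrightarrow P$ with a surjective quasi-isomorphism $P\to N$ extending $f$ by attaching free $A$-cells, first to surject onto cycles of $N$ and then to kill the relative homology, which is the explicit construction underlying Proposition \ref{prop-ess}(i) and avoids the bookkeeping you flagged as the main obstacle.
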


Here is a list of properties of semi-free modules which allow us to define the derived functor by means of semi-free resolutions.
\begin{proposition}\label{prop-ess}
\begin{romlist}
\item Any morphism $f: M\rightarrow N$ of $A$-modules has a semi-free resolution. In particular every $A$-module has a semi-free resolution.

\item If $P$ is a semi-free $A$-module,  $\Hom_A(P,-)$ preserves quasi-isomorphisms.

\item Let $P$ and $Q$ be semi-free $A$-modules and $f:P\rightarrow Q$ be a quasi-isomorphism. Then
$$
g\otimes f  :M\otimes _A P\rightarrow N\otimes_A Q
$$
is a quasi-isomorphism if $g:M\rightarrow N$ is a quasi-isomorphism.
\item Let $P$ and $Q$ be semi-free $A$-modules and $f:P\rightarrow Q$ be a quasi-isomorphism. Then
$$
\Hom_R(g, f)  :\Hom_A(Q,M)\rightarrow \Hom_A(P,N)
$$
is a quasi-isomorphism if $g:M\rightarrow N$ is a quasi-isomorphism.
\end{romlist}
\end{proposition}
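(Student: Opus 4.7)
The plan is to exploit the very definition of semi-free: $P$ is the union of a filtration $P(-1)=0\subset P(0)\subset\cdots$ whose successive quotients $P(k)/P(k-1)$ are free $A$-modules on cycles, i.e.\ of the form $A\otimes V_k$ with $V_k$ a graded $\kk$-module carrying the zero differential. Thus every property that is obvious for free modules $A\otimes V$ can be propagated to arbitrary semi-free modules by induction along the filtration together with a colimit argument, using that $\kk$ is a field (so every $V_k$ is free as a graded $\kk$-module).

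For part (i), I would build the semi-free resolution stage by stage in the classical Cartan--Eilenberg manner. Given $f:M\to N$, set $P(0)=M$ and choose a free graded $\kk$-module $V_0$ together with a map $V_0\to N$ such that the induced map $H_*(M)\oplus H_*(A\otimes V_0)\to H_*(N)$ is surjective; define $P(1)=P(0)\oplus A\otimes V_0$ with an appropriate extension of $f$. Having constructed $P(k)\to N$, pick a free $\kk$-module $V_k$ mapping onto the kernel of $H_*(P(k))\to H_*(N)$ and attach $A\otimes V_k$ with differential landing in $P(k)$ so as to kill exactly those cycles; simultaneously adjoin generators to hit any new homology classes of $N$ not yet reached. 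The colimit $P=\bigcup_k P(k)$ is semi-free over $M$ by construction, and the induced map $P\to N$ is a quasi-isomorphism because homology commutes with filtered colimits.

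For part (ii), the base case is clear: if $P=A\otimes V$ with $V$ free over $\kk$ (with zero differential), then $\Hom_A(P,M)\cong\Hom_\kk(V,M)$, which is a product of shifts of $M$ and hence sends quasi-isomorphisms of $M$'s to quasi-isomorphisms. For the inductive step I would apply $\Hom_A(-,M)$ and $\Hom_A(-,M')$ to the short exact sequence
\begin{equation*}
0\longrightarrow P(k-1)\longrightarrow P(k)\longrightarrow A\otimes V_k\longrightarrow 0,
\end{equation*}
which is split as a sequence of graded $A$-modules (so remains short exact after $\Hom_A$). A five-lemma comparison of the two resulting long exact sequences gives the inductive step, and the colimit case follows from $\Hom_A(\operatorname{colim}P(k),M)=\lim\Hom_A(P(k),M)$ and the fact that the induced tower satisfies the Mittag--Leffler condition (the splittings make the transition maps surjective), so passing to the limit preserves quasi-isomorphisms.

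Parts (iii) and (iv) follow by the same two-step strategy: verify the statement for free modules $A\otimes V$, where $M\otimes_A(A\otimes V)\cong M\otimes_\kk V$ and $\Hom_A(A\otimes V,M)\cong\Hom_\kk(V,M)$ so the claim is immediate, then extend by induction along the semi-free filtration using the split short exact sequence above and the five lemma. To handle the simultaneous variation of both variables in $g\otimes f$ and $\Hom_A(g,f)$, factor each of these as a composition $g\otimes\id$ followed by $\id\otimes f$ (respectively $\Hom_A(\id,f)$ followed by $\Hom_A(g,\id)$): the first factor is handled by the preceding inductive argument on the semi-free variable, while the second is handled by observing that any quasi-isomorphism between semi-free modules is a homotopy equivalence (because $\Hom_A(P,-)$ preserves quasi-isomorphisms by part (ii), applied to the mapping cone of $f$), and homotopy equivalences are preserved by any additive functor. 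The main subtlety, and the one point that needs genuine care rather than routine bookkeeping, is the colimit step in the unbounded case: one must ensure the filtration arguments pass to the limit, which is where the Mittag--Leffler or splitness observation does the real work.
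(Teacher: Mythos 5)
Your proposal is correct and follows essentially the same route the paper indicates: the paper gives no detailed proof of (i) and (ii) (deferring to the standard filtration arguments you spell out), and for (iii) and (iv) it relies on exactly the observation you use, namely that part (ii) implies a quasi-isomorphism between semi-free modules is a homotopy equivalence, from which (iii) and (iv) follow since additive functors preserve homotopy equivalences. Your write-up simply supplies the inductive, five-lemma, and Mittag--Leffler details that the paper leaves implicit.
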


The second statement in proposition~\ref{prop-ess} implies that a quasi-isomorphism  $f:M\rightarrow N$ between semi-free $A$-modules is a homotopy equivalence, i.e. there is a map $f':N\rightarrow M$ such that $ ff'-id_N= [d_N,h']$ and $f'f-id_M=[d_M,h]$ for some $h:M\rightarrow N$  and $h':N\rightarrow M$. In fact part (iii) and (iv) follow easily from this observation.

The properties listed above  imply that the functors $-\otimes _AM $ and $\Hom_A(-,M)$ preserve enough weak equivalences, ensuring that the derived functors $\otimes^L_A $ and $\RHom_A(-,M)$ exist for all $A$-modules $M$.

Since we are interested  in Hochschild and cyclic (co) homology, we switch to the category of DG $A$-bimodules.  This category is the same as  the category of DG $A^e$-modules. Therefore one can endow  $A$-bimodules with a model category structure and define the derived functors  $-\otimes_{A^e} ^L M$ and $\RHom_{A^e} (-,M)$ by means of fibrant-cofibrant replacements.

More precisely, for two $A$-bimodules  $M$ and $N$ we have
$$
\Tor_*^{A^{e}}(M,N)= H_*(P\otimes_{A^e} N)
$$
and
$$
\Ext^*_{A^{e}}(M,N)=  H^*(\Hom_{A^{e}}(P, N))
$$
where $P$ is cofibrant replacement for $M$.

By Proposition~\ref{prop-ess} every $A^e$-module has a semi-free resolution. There is an explicit construction of the latter using the
two-sided bar construction. For right and left $A$-modules $P$ and $M$, let
\begin{equation}
B(P,A,M)=\bigoplus_{k\geq 0} P\otimes (s\bar{A})^{\otimes k} \otimes M
\end{equation}
equipped with the following differential:
\begin{itemize}
\item if $k=0$,
\begin{equation*}
D(p[\quad] m)= dp [\quad] n+(-1)^{|p|} p[\quad] dm
\end{equation*}
\item if $ k>0$
\begin{eqnarray*}
\lefteqn{D(  p[a_{1},\cdots , a_{k}]m)}\\
& = & d_0( p[a_{1},\cdots , a_{k}]m)+ d_1( p[a_{1},\cdots , a_{k}]m) \\
& = & dp[a_{1},\cdots , a_{k}]m-\sum_{i=1}^{k} (-1)^{\epsilon_{i}}p  [a_{1},\cdots,d a_{i},\dots a_{k}]m \\
& & \qquad \qquad \qquad \qquad \qquad \qquad \qquad \qquad +(-1)^{\epsilon_{k+1}}p  [a_{1},\cdots, a_{k}]dm \\
& & +(-1)^{|p|} pa_{1}[a_{2},\cdots , a_{k}]m +\sum_{i=2}^{k} (-1)^{\epsilon_{i}}p  [a_{1},\cdots,a_{i-1}a_{i},\dots a_{k}]m \\
& & \qquad \qquad \qquad \qquad \qquad \qquad \qquad \qquad - (-1)^{\epsilon_{k}}p[a_{1},\cdots ,a_{k-1} ]a_{k}m,
\end{eqnarray*}
where
$$
\epsilon_{i}=|p|+ |a_{1}|  +\cdots |a_{i-1}|-i+1.
$$
\end{itemize}

Let $P=A$ and  $\epsilon_M:B(A,A,M)\rightarrow M$ be defined by

\begin{equation}
\epsilon_M(a [a_{1}, \cdots , a_{k}])m)=\begin{cases}0, \text{ if }  k\geq 1,\\ am, \text{ if } k=0.
\end{cases}
\end{equation}
It is clear that $\epsilon_M$ is a map of left $A$-modules if $M$.
\begin{lemma} In the category of left $A$-modules,  $\epsilon_M:B(A,A,M)\rightarrow M$ is a semi-free resolution.
\end{lemma}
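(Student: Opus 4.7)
Being a semi-free resolution means being a semi-free $A$-module together with a quasi-isomorphism of $A$-modules onto the target, so the proof separates into two independent parts: (a) showing $\epsilon_M$ is a quasi-isomorphism, and (b) showing $B(A,A,M)$ is semi-free as a left $A$-module. The $A$-linearity of $\epsilon_M$ is immediate from the outer bimodule structure of $B(A,A,M)$.

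For (a), I would construct an explicit contracting homotopy. Define the $\kk$-linear map $\eta: M \to B(A,A,M)$ by $\eta(m) = 1[\,]m$; this is not $A$-linear, but that is irrelevant since being a quasi-isomorphism is a statement about underlying $\kk$-complexes, and $\epsilon_M \circ \eta = \id_M$. On the non-normalized two-sided bar complex, introduce the classical extra degeneracy
$$
s(a[a_1,\ldots,a_k]m) \;=\; \pm\,1[a, a_1, \ldots, a_k]m,
$$
with signs dictated by the suspension conventions of Section \ref{section-CH}. A direct bookkeeping with the formulas for $d_{0}$ and $d_{1}$ yields the identity $[D,s] = \id - \eta\epsilon_M$. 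The identity descends to the normalized version because any bar word containing a $1$ in a bar slot is identified with zero, and terms of the form $1[a,\ldots]m$ fall cleanly into the normalized quotient. Combined with $\epsilon_M\eta = \id_M$, this proves that $\epsilon_M$ induces an isomorphism on homology.

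For (b), I would use the length filtration
$$
F_p B(A,A,M) \;=\; \bigoplus_{k \leq p} A \otimes (s\bar A)^{\otimes k} \otimes M,
$$
which is a filtration by left $A$-submodules because $A$ acts only on the leftmost tensor factor. Since $d_{1}$ strictly decreases $p$ while $d_{0}$ preserves it, the associated graded is
$$
F_p / F_{p-1} \;\cong\; A \otimes_\kk V_p, \qquad V_p := (s\bar A)^{\otimes p} \otimes_\kk M,
$$
as a left $A$-module with tensor differential $d_A \otimes \id + \id \otimes d_{V_p}$. Because $\kk$ is a field, every $\kk$-complex admits a semi-free $\kk$-filtration, so $V_p = \bigcup_j W_j$ with each $W_j/W_{j-1}$ free over $\kk$ on cycles of the quotient. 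Tensoring on the left with $A$ produces a filtration of $A \otimes V_p$ whose successive quotients $A \otimes (W_j/W_{j-1})$ are free $A$-modules generated by the elements $1 \otimes w$, which are cycles because $d_A(1) = 0$. An application of Lemma \ref{lemm-filt} shows that each $A \otimes V_p$ is semi-free, and a second application of Lemma \ref{lemm-filt} to the length filtration $F_\bullet$ concludes that $B(A,A,M)$ itself is semi-free.

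The main technical obstacle is the sign bookkeeping in the homotopy identity $[D,s] = \id - \eta\epsilon_M$: the suspension signs implicit in the $\epsilon_i$'s and the interaction with the normalization quotient need to be tracked carefully. The cleanest route is to verify the identity first on the non-normalized bar complex, where the extra degeneracy is classical (see \cite{CarEilen, MacLane}), and then invoke the standard quasi-isomorphism between the normalized and non-normalized bar complexes noted after the Hochschild (co)chain formulas in Section \ref{section-CH}.
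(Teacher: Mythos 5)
Your proposal is correct and follows essentially the same route as the paper: the contracting homotopy $s(a[a_1,\ldots,a_k]m)=\pm 1[a,a_1,\ldots,a_k]m$ is exactly the map the paper uses (stated there as $[D,h]=\id$ on $\ker\epsilon_M$ rather than $[D,s]=\id-\eta\epsilon_M$, which is an equivalent formulation), and the semi-freeness argument via the length filtration, the identification of $F_p/F_{p-1}$ with $A\otimes_\kk (s\bar A)^{\otimes p}\otimes M$, and the appeal to Lemma \ref{lemm-filt} is the same. The only cosmetic difference is that the paper exhibits the two-step filtration $0\subset\ker d\subset V_p$ explicitly instead of invoking general semi-freeness of $\kk$-complexes over a field.
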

\begin{proof}
We first prove that this is a resolution. Let $h:B(A,A,M)\rightarrow B(A,A,M)$ be defined by
\begin{equation}
h(a[a_1,a_2,\cdots a_k]m)= \begin{cases}[a,a_1,\cdots a_k]m, \text{ if }  k\geq 1,\\
[a]m,\text{ if } k=0.
\end{cases}
\end{equation}
On can easily check that $ [D,h]= id$  on $\ker \epsilon_M$, which implies $H_*(\ker (\epsilon_M))=0$. Since $\epsilon_M$ is surjective,  $\epsilon_M$ is a quasi-isomorphism.
Now we prove that $B(A,A,M)$ is a semifree $A$-module.  Let
$F_k= \bigoplus_{i\leq k} A\otimes T(s\bar{A})^{\otimes i} \otimes M$.  Since $d_1(F_{k+1})\subset F_k$, then $F_{k+1}/F_k$ is isomorphic as a differential graded $A$-module to $ (A\otimes (sA)^{\otimes k}\otimes M, d_0)=(A,d)\otimes_\kk ( (sA)^{\otimes k},d)\otimes (M,d)$. The latter is a semifree $(A,d)$-module since $( (sA)^{\otimes k},d)\otimes_\kk(M,d)$ is a semifree $\kk$-module via the filtration
$$
0 \hookrightarrow \ker (d\otimes 1+1\otimes d) \hookrightarrow ( (sA)^{\otimes k},d)\otimes_\kk(M,d).
$$
Therefore $B(A,A,M)$ is semi-free by Lemma~\ref{lemm-filt}.

\end{proof}

\begin{corollary}\label{cor-reso}  The map $\epsilon_A: B(A,A):=B(A,A,\kk)\rightarrow \kk$ given by
\begin{equation*}
\epsilon_k(a[a_1,a_2\cdots a_n])=\begin{cases} \epsilon(a), \text{ if } n=0,\\ 0 \text{ otherwise}
 \end{cases}
 \end{equation*}
 is a resolution. Here $\epsilon:A\rightarrow \kk$ is the augmentation of $A$. In other words $ B(A,A)$ is acyclic. 
\end{corollary}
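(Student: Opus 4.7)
The plan is to obtain the corollary as a direct specialization of the preceding lemma. Taking $M = \kk$ with the left $A$-module structure given by the augmentation $\epsilon: A \to \kk$, I would first check that the map $\epsilon_\kk: B(A,A,\kk) \to \kk$ produced by the lemma agrees with the map $\epsilon_A$ in the statement: on a generator with $k=0$ one has $\epsilon_\kk(a[\,] \otimes 1) = a \cdot 1 = \epsilon(a)$, while both maps vanish for $k \geq 1$. Once this identification is made, the lemma immediately yields that $\epsilon_A$ is a quasi-isomorphism, so the homology of $B(A,A) = B(A,A,\kk)$ is concentrated in degree zero and equal to $\kk$.

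To unpack the ``acyclicity'' assertion, I would spell out what the contracting homotopy $h$ from the proof of the lemma specializes to: set
\[
h(a[a_1,\ldots,a_k]) = [a, a_1, \ldots, a_k] \quad (k \geq 1), \qquad h(a[\,]) = [a].
\]
A direct computation, identical in form to the one in the lemma, gives $[D,h] = \id$ on $\ker \epsilon_A$. Hence $\ker \epsilon_A$ is acyclic; since $\epsilon_A$ is surjective (any $\lambda \in \kk$ is the image of $1 \cdot [\,] \otimes \lambda$), the short exact sequence $0 \to \ker \epsilon_A \to B(A,A) \to \kk \to 0$ shows that $B(A,A)$ has the homology of $\kk$. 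In the loose sense meant by the author, the augmented complex $\ker \epsilon_A$ is acyclic.

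There is no genuine obstacle here: all the substance is already contained in the lemma, and the only thing to verify is the compatibility of $\epsilon_\kk$ with the map $\epsilon_A$ stated in the corollary. I would therefore present the proof in a sentence or two, either by pointing back to the lemma with $M = \kk$ or by recording the explicit homotopy $h$ above for the reader's convenience.
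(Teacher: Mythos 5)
Your proposal is correct and follows exactly the paper's own route: the paper proves the corollary in one line by taking $M=\kk$ with the $A$-module structure $a\cdot k=\epsilon(a)k$ in the preceding lemma. Your extra verification that the specialized map agrees with $\epsilon_A$ and your explicit contracting homotopy are just the lemma's argument unwound, so nothing differs in substance.
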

\begin{proof}   In the previous lemma, let $M=\kk$ be the differential $A$-module with trivial differential and the module structure
$a.k:= \epsilon(a)k$.
\end{proof}

\begin{lemma} In the category $Mod(A^e)$,  $\epsilon_A:B(A,A,A)\rightarrow A$ is a semifree resolution.
\end{lemma}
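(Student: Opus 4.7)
The plan is to adapt, almost verbatim, the argument used for $\epsilon_M : B(A,A,M) \to M$ in the previous lemma to the current $A^e$-module setting. Two things need checking: that $\epsilon_A$ is a quasi-isomorphism, and that $B(A,A,A)$ is semi-free as a DG $A^e$-module (equivalently, as an $A$-bimodule with the outer structure).

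For the quasi-isomorphism, I would invoke the previous lemma directly with $M = A$ regarded as a left $A$-module: the explicit contracting homotopy $h$ defined there still satisfies $[D,h] = \id$ on $\ker \epsilon_A$, so $H_*(\ker \epsilon_A) = 0$, and surjectivity of $\epsilon_A$ finishes this half. Equivalently, being a quasi-isomorphism is a property of the underlying $\kk$-complex and is insensitive to the ambient module category, so the previous lemma already supplies it.

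For semi-freeness, I would use the same filtration $F_k = \bigoplus_{i \leq k} A \otimes (s\bar A)^{\otimes i} \otimes A$ of $B(A,A,A)$. Since $d_1$ strictly lowers the length of the bracketed part, $d_1(F_{k+1}) \subset F_k$, and therefore the associated graded pieces satisfy
$$
F_{k+1}/F_k \;\cong\; (A \otimes (s\bar A)^{\otimes k} \otimes A,\, d_0)
$$
as DG $A^e$-modules with the outer bimodule structure. The key observation is that, with the outer structure, $A \otimes_\kk V \otimes_\kk A \cong A^e \otimes_\kk V$ canonically as graded $A^e$-modules, for any $\kk$-complex $V$. Taking $V = (s\bar A)^{\otimes k}$ with its tensor-power differential, together with the internal differentials of the two flanking copies of $A$, recovers $d_0$. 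Since $\kk$ is a field, $(s\bar A)^{\otimes k}$ is semi-free over $\kk$ via the two-step filtration $0 \subset \ker d \subset (s\bar A)^{\otimes k}$, and extension of scalars shows that each $F_{k+1}/F_k$ is a semi-free $A^e$-module.

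Finally, I would apply Lemma \ref{lemm-filt} to the filtration $\{F_k\}$ to conclude that $B(A,A,A)$ is semi-free over $A^e$, completing the proof. There is no serious obstacle; the only point that deserves explicit mention is the canonical identification of the outer $A$-bimodule $A \otimes_\kk V \otimes_\kk A$ with the free $A^e$-module $A^e \otimes_\kk V$, which is what promotes the $\kk$-level semi-freeness of each $(s\bar A)^{\otimes k}$ to $A^e$-level semi-freeness of the associated graded.
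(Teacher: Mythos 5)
Your proposal is correct and follows essentially the same route as the paper: the same filtration $F_k=\bigoplus_{i\leq k}A\otimes (s\bar A)^{\otimes i}\otimes A$, the same identification of the associated graded with $(A,d)\otimes_\kk((s\bar A)^{\otimes k},d)\otimes_\kk(A,d)$, semi-freeness of $(s\bar A)^{\otimes k}$ over $\kk$ via the $\ker d$ filtration, and Lemma \ref{lemm-filt} to conclude, with the quasi-isomorphism inherited from the previous lemma. Your explicit remark that the outer bimodule $A\otimes_\kk V\otimes_\kk A$ is the free $A^e$-module $A^e\otimes_\kk V$ is a useful elaboration of a step the paper leaves implicit, but it is not a different argument.
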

\begin{proof}
The proof is similar to  the proof of the previous lemma. First of all, it is obvious that this is a map of  $A^e$-modules. Let
$F_k= \bigoplus_{i\leq k} A\otimes T(s\bar{A})^{\otimes i} \otimes A$. Then  $F_{k+1}/F_k$ is isomorphic as a differential graded $A$-module to $ (A\otimes (sA)^{\otimes k}\otimes A, d_0)=(A,d)\otimes_\kk ( (sA)^{\otimes k},d)\otimes (A,d)$.  The latter is semi-free as $A^e$-module since $( (sA)^{\otimes k},d)$ is a semi-free $\kk$-module via the filtration $\ker d \hookrightarrow   (sA)^{\otimes k}$.
\end{proof}

Since  the two-sided bar construction $B(A,A,A)$ provides us with a semi-free resolution of $A$  we have that
$$
HH_*(A,M)=H_* (B(A,A,A)\otimes_{A^e}M)= \Tor_*^{A^{e}}(A,M)
$$
and
$$
HH^*(A,M)=H^* (Hom_{A^e}(B(A,A,A), M))= \Ext^*_{A^{e}}(A,M).
$$

In some special situations, for instance that of  Calabi-Yau algebras, one can choose smaller resolutions to compute Hochschild homology or cohomology.

The following result will be useful.

\begin{lemma}\label{lem-util} If $H^*(A)$ is finite dimensional then for all finitely generated semi-free $A$-bimodules $P$ and $Q$, $H^*(P)$, $H^*(Q)$ and $H^*(\Hom_{A^e}(P,Q))$ are also finite dimensional.

\end{lemma}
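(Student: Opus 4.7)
The plan is to exploit the finite filtration coming from the semi-free structure on $P$ and $Q$. By definition of a finitely generated semi-free $A^e$-module, $P$ admits a finite filtration $0 = P(-1) \subset P(0) \subset \cdots \subset P(k) = P$ in which each subquotient $P(i)/P(i-1)$ is a free $A^e$-module on finitely many cycle generators, i.e.\ a finite direct sum of shifts $\bigoplus_j A^e[n_{ij}]$ of $A^e = A \otimes A^{op}$. Crucially, the extension $P(i) = P(i-1) \oplus (P(i)/P(i-1))$ is split as a sequence of graded $A^e$-modules (only the differential is twisted), so the associated short exact sequence of complexes $0 \to P(i-1) \to P(i) \to P(i)/P(i-1) \to 0$ remains short exact after applying $\Hom_{A^e}(-,Q)$.

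First I would establish finite-dimensionality of $H^*(P)$ (and $H^*(Q)$, by the same argument). Since $\kk$ is a field, the K\"unneth formula gives $H^*(A^e) = H^*(A) \otimes H^*(A)^{op}$, which is finite dimensional by hypothesis. Hence $H^*(P(i)/P(i-1))$, a finite sum of shifts of $H^*(A^e)$, is finite dimensional. Induction on $i$ using the long exact sequence in cohomology associated to $0 \to P(i-1) \to P(i) \to P(i)/P(i-1) \to 0$ then shows $H^*(P(i))$ is finite dimensional, and in particular so is $H^*(P) = H^*(P(k))$.

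Next I would treat $H^*(\Hom_{A^e}(P,Q))$. For a single free generator, $\Hom_{A^e}(A^e[n], Q) \cong Q[-n]$ as a $\kk$-complex, whose cohomology is merely a shift of $H^*(Q)$ and thus finite dimensional by the previous step. By additivity, $H^*(\Hom_{A^e}(P(i)/P(i-1), Q))$ is finite dimensional for each $i$. Applying $\Hom_{A^e}(-,Q)$ to the split-as-graded sequences above produces short exact sequences of $\kk$-complexes
\begin{equation*}
0 \to \Hom_{A^e}(P(i)/P(i-1), Q) \to \Hom_{A^e}(P(i), Q) \to \Hom_{A^e}(P(i-1), Q) \to 0,
\end{equation*}
and a second induction on $i$ using their long exact sequences in cohomology yields that $H^*(\Hom_{A^e}(P,Q))$ is finite dimensional.

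I do not expect a serious obstacle. The only point requiring care is to verify that the filtration splits as a filtration of graded (non-differential) $A^e$-modules so that $\Hom_{A^e}(-,Q)$ preserves exactness; this is built into the definition of a semi-free extension, where each stage is obtained by adjoining a free graded module with a differential landing in the previous stage. Everything else is standard K\"unneth plus long-exact-sequence bookkeeping.
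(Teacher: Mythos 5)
Your proof is correct and follows essentially the same route as the paper: both arguments reduce to the finite semi-free filtration, the finite-dimensionality of $H^*(A^e)$ over the field $\kk$, and an induction over the stages of the filtration (the paper phrases this as "a finite sequence of extensions of shifted free bimodules"). Your write-up is in fact slightly more careful than the paper's, since you make explicit why the filtration splits as graded modules and hence why $\Hom_{A^e}(-,Q)$ preserves the short exact sequences needed for the induction.
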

\begin{proof}
Since $A$ has finite dimensional cohomology, we see that $H^*(A\otimes A^{op})$ is finite dimensional. Similarly $P$ (or $Q$) has finite cohomological dimension since it is obtained via a finite sequence of extensions of free bimodules of the form $(A\otimes A^{op})[n]$. We also have $\Hom_{A^{e}}(A\otimes A^{op},A\otimes A^{op})\simeq A\otimes A^{op}$, and $A\otimes A^{op}$ is a free $A$-bimodule of finite cohomological dimension. Since $\Hom_{A^e}(P,Q)$ is obtained through a finite sequence of extensions of shifted free $A$-bimodules, we obtain that it has  finite cohomological dimension.
\end{proof}

\section{Calabi-Yau DG algebras} \label{sec:CY}

Throughout this section $(A,d)$ is a differential graded algebra, and by an $A$-bimodule we mean a differential graded $(A,d)$-bimodule.

In this section we essentially explain how an isomorphism
\begin{equation}\label{vdb}HH^*(A,A)\simeq HH_*(A,A)\end{equation}
(of $HH^*(A,A)$-modules) gives rise to a BV-structure on $HH^*(A,A)$ extending its canonical Gerstenhaber structure. For a Calabi-Yau DG algebra one does have such an isomorphism (\ref{vdb}) and this is a special case of a more general statement due to Van den Bergh~\cite{VdBergh}. The main idea is due to Ginzburg~\cite{Ginz} who proved that, for a Calabi-Yau algebra $A$,  $HH^*(A,A)$ is a BV-algebra. However he works with ordinary algebras rather than DG algebras. Here we have adapted his result to the case of Calabi-Yau DG algebras. For this purpose one has to work in the correct derived category of $A$-bimodules, and this is the derived category of perfect $A$-bimodules as formulated below. All this can be extended to the case of $A_\infty$ but for simplicity we refrain from doing so.

\subsection{Calabi-Yau algebras}

The notion of a Calabi-Yau algebra was first introduced by Ginzburg~\cite{Ginz} in the context of graded algebras, and then generalized by Kontsevich-Soibelman~\cite{KS}  to the context of differential graded algebras.

\begin{definition} (Kontsevich-Soibelman~\cite{KS})
\begin{numlist}
\item An $A$-bimodule is \emph{perfect} if it is quasi-isomorphic to a direct summand of a finitely generated  semifree $A$-bimodules.
\item $A$ is said to be \emph{homologically smooth} if it is perfect as an $A$-bimodule.
\end{numlist}
\end{definition}

\begin{remark} In~\cite{KS}, the definition of perfectness uses the notion of \emph{extension}~\cite{Keller} and it is essentially the same as ours.
\end{remark}
We define \emph{DG-projective $A$-modules} to be the direct summands of semifree $A$-modules. As a consequence, an $A$-bimodule is perfect if and only if it is quasi-isomorphic to a finitely generated DG-projective $A$-bimodule. We call the latter a finitely generated \emph{DG-projective $A$-module resolution}. This is analogous to having a bounded projective resolution in the case of ordinary modules (without differential). By Proposition~\ref{prop-ess}, DG-projectives have all the nice homotopy theoretic properties that one expects.

The content of the next lemma is that $A^!:=\RHom_{A^{e}}(A,A^e)$, called the \emph{derived dual} of $A$, is also a perfect $A$-bimodule. The $A$-bimodule structure of $A^!$ is induced by the right action of $A^e$ on itself. Recall that for an $A$-bimodule $M$, $\RHom_{A^{e}}(-,M)$ is the right derived functor of $\Hom_{A^{e}}(-,M)$, i.e.  for an $A$-bimodule $N$, $\RHom_{A^{e}}(N,M)$ is the complex $\Hom _{A^{e}}(P,M)$ where $P$ is a DG-projective $A$-bimodule quasi-isomorphic to $N$. In general, $M^!=\RHom_{A^{e}}^*(M,A^{e})$ is different from the usual  dual $$M^\vee=\Hom_{A^{e}} (M,A^{e}).$$ 
\begin{lemma}\label{dual-perfect}\cite{KS}
If A is homologically smooth then $A^!$ is a perfect $A$-bimodule.
\end{lemma}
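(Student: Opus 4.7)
The plan is to use a finitely generated DG-projective resolution of $A$ and compute $A^!$ explicitly from it, showing that the resulting complex is built out of finitely many shifted copies of $A^e$ up to a direct summand.

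First, since $A$ is homologically smooth, there is a finitely generated DG-projective $A$-bimodule $P$ with a quasi-isomorphism $P \xrightarrow{\sim} A$, and by definition $P$ is a direct summand of some finitely generated semifree $A$-bimodule $Q$. By Proposition \ref{prop-ess} and the definition of the derived functor, one has a quasi-isomorphism of $A$-bimodules
\begin{equation*}
A^! = \RHom_{A^e}(A, A^e) \;\simeq\; \Hom_{A^e}(P, A^e),
\end{equation*}
where the $A$-bimodule structure on the right hand side comes from the \emph{remaining} (``inner'') $A^e$-action on the target copy of $A^e$, the Hom itself being taken with respect to the outer $A^e$-action on both sides.

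Next I would handle the basic building block: for the free $A$-bimodule $A^e[n]$ one checks directly that $\Hom_{A^e}(A^e[n], A^e) \cong A^e[-n]$ as $A$-bimodules (this is the DG analogue of $\Hom_R(R,R)\cong R$ for any ring $R$, with the shift flipping sign). Then for any finite filtration $0 = F_{-1} \subset F_0 \subset \cdots \subset F_N = Q$ whose successive quotients are finite direct sums of shifted free bimodules $A^e[n_i]$, I would argue by induction on $N$ that $\Hom_{A^e}(Q,A^e)$ carries a finite filtration with successive quotients of the form $A^e[-n_i]$. The key point at each step is that the short exact sequences of the filtration have DG-projective terms, so applying $\Hom_{A^e}(-,A^e)$ preserves exactness (sending the sub- and quotient-modules to the quotient- and submodules of the new filtration, respectively). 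Hence $\Hom_{A^e}(Q,A^e)$ is itself finitely generated semifree.

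Finally, because $P$ is a direct summand of $Q$, the functor $\Hom_{A^e}(-,A^e)$ exhibits $\Hom_{A^e}(P,A^e)$ as a direct summand of $\Hom_{A^e}(Q,A^e)$. Combining this with the quasi-isomorphism $A^! \simeq \Hom_{A^e}(P,A^e)$ shows that $A^!$ is quasi-isomorphic to a direct summand of a finitely generated semifree $A$-bimodule, which is exactly the definition of perfectness.

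The main obstacle I anticipate is purely bookkeeping rather than conceptual: one has to keep track of \emph{which} of the two commuting $A$-bimodule structures on $A^e = A \otimes A^{op}$ is being used for the Hom (the outer one, along which $P$ is resolved) and which survives to give the bimodule structure on $A^!$ (the inner one); once the conventions are fixed, the inductive argument on the length of the semifree filtration of $Q$ is straightforward and the use of Proposition \ref{prop-ess} (iv) guarantees that the passage from $A$ to a DG-projective replacement $P$ does not alter the quasi-isomorphism class of $\Hom_{A^e}(-,A^e)$.
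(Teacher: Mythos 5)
Your proof is correct and follows essentially the same route as the paper: replace $A$ by a finitely generated DG-projective $P$ sitting inside a finitely generated semifree $Q$, observe that $\Hom_{A^e}(Q,A^e)$ is again finitely generated semifree because $Q$ is a finite iterated extension of shifted free bimodules, and conclude that $\Hom_{A^e}(P,A^e)\simeq A^!$ is a direct summand of it. The only difference is that you spell out the induction on the filtration of $Q$ and the inner/outer bimodule bookkeeping, which the paper leaves implicit.
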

\begin{proof} Let $P=P_i  \twoheadrightarrow A$ be a finitely generated DG-projective resolution. Note that $A^!= \RHom_{A^e}(A, A^e)$ is quasi-isomorphic to the complex $\Hom_{A^e}(P_i, A^e)$. Each $P_i$ being a direct summand of a semi-free module $Q_i$.  Since $Q_i$ is obtained through  a finite sequence of extensions of a free $A^e$-modules, 
$\Hom(Q_i, A^e)$  is also a semi-free module. Clearly $\Hom_{A^e}(P_i, A^e)$ is a direct summand of the semi-free module $\Hom(Q_i, A^e)$, therefore DG-projective.
This proves the lemma.

\end{proof}

We say that a DG algebra $A$ is \emph{compact} if the cohomology $H^*(A)$ is finite dimensional.

\begin{lemma} \label{bad-lemma} A compact homologically smooth DG algebra $A$ has finite dimensional Hochschild cohomology $HH^*(A,A)$.
\end{lemma}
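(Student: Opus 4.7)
The plan is to compute $HH^*(A,A)$ via a perfect resolution and then invoke Lemma \ref{lem-util}. Since $A$ is homologically smooth, by definition $A$ is perfect as an $A^e$-module, so there exists a quasi-isomorphism $P \xrightarrow{\sim} A$ of $A$-bimodules where $P$ is a direct summand of some finitely generated semi-free $A^e$-module $Q$; write $Q = P \oplus P'$. In particular $P$ is DG-projective, so it can be used to compute $\Ext^*_{A^e}$.

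First I would identify $HH^*(A,A)$ with $H^*(\Hom_{A^e}(P, A))$, using the description of Hochschild cohomology as $\Ext^*_{A^e}(A,A)$ together with the existence of the DG-projective replacement $P \to A$. Next, applying Proposition \ref{prop-ess}(iv) to the quasi-isomorphism $P \xrightarrow{\sim} A$ in the second argument (with $P$ itself in the first argument, which is DG-projective hence behaves well), I get a quasi-isomorphism
\begin{equation*}
\Hom_{A^e}(P, P) \xrightarrow{\sim} \Hom_{A^e}(P, A).
\end{equation*}
Hence $HH^*(A,A) \cong H^*(\Hom_{A^e}(P, P))$.

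Now observe that $\Hom_{A^e}(P, P)$ is a direct summand of $\Hom_{A^e}(Q, Q)$ through the natural inclusion-projection maps coming from $Q = P \oplus P'$ applied in both arguments. Since $Q$ is a finitely generated semi-free $A^e$-module and $H^*(A)$ is finite dimensional by compactness, Lemma \ref{lem-util} tells us that $H^*(\Hom_{A^e}(Q, Q))$ is finite dimensional. Being a direct summand of a finite dimensional graded vector space, $H^*(\Hom_{A^e}(P, P))$ is finite dimensional, and hence so is $HH^*(A,A)$.

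The only delicate point is the transfer of the finite-dimensionality statement from semi-free bimodules (as stated in Lemma \ref{lem-util}) to their direct summands; this is harmless because cohomology commutes with direct sums and a summand of a finite-dimensional space is finite-dimensional. Everything else is a formal manipulation of the derived category of $A$-bimodules.
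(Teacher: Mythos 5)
Your proposal is correct and follows essentially the same route as the paper: replace $A$ by a finitely generated DG-projective resolution $P$, identify $HH^*(A,A)$ with $H^*(\Hom_{A^e}(P,P))$, and conclude by Lemma \ref{lem-util}. You are in fact slightly more careful than the paper at one point, since Lemma \ref{lem-util} is stated for finitely generated \emph{semi-free} bimodules while $P$ is only a direct summand of one; your step realizing $\Hom_{A^e}(P,P)$ as a direct summand of $\Hom_{A^e}(Q,Q)$ for $Q=P\oplus P'$ finitely generated semi-free cleanly closes that small gap.
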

\begin{proof}By assumption $A$ has finite dimensional cohomology and so does $A^e=A\otimes A^{op}$. Now let $P\twoheadrightarrow A$ be a finitely generated DG-projective resolution of $A$-bimodules. We have a quasi-isomorphism of complexes 
$$
C^*(A,A)\simeq \RHom_{A^{e}} (A,A)\simeq Hom_{A^{e}} (P,P),
$$ 
and the last term has finite dimensional cohomology by Lemma~\ref{lem-util}.
\end{proof}

\begin{definition}(Ginzburg~\cite{Ginz}, Kontsevich-Soibelman~\cite{KS})
A $d$-dimensional \emph{Calabi-Yau differential graded algebra} is a homologically smooth DG-algebra  endowed with an $A$-bimodule quasi-isomorphism
\begin{equation}\label{cond1}
\psi:A\overset{\simeq}{\longrightarrow} A^{!}[d]
\end{equation}
such that 
\begin{equation}\label{cond2}
\psi ^!=\psi [d].
\end{equation}
\end{definition}

The main reason to call such algebras ``Calabi-Yau'' is that the algebra $\mathrm{End}(\mathcal{E})$ of endomorphisms of a tilting generator $\mathcal{E}\in D^b(Coh (X))$ of the bounded derived category of coherent sheaves on a smooth algebraic variety $X$ is a Calabi-Yau algebra if and only if  $X$  is a Calabi-Yau (see~\cite{Ginz} Proposition 3.3.1).  

There are many other examples provided by representation theory. For instance most of the three dimensional Calabi-Yau algebras are obtained as quotients of the free associative algebras $F=\mathbb{C}\langle x_1,\cdots x_n\rangle$ on $n$ generators. An element $\Phi$ of $F_{cyc}:=F/[F,F]$ is called a \emph{cyclic potential}.  One can define the partial derivatives $\frac{\partial}{\partial x_i}: F_{cyc} \rightarrow F$ in this setting. Many of the 3-dimensional Calabi-Yau  algebras are obtained as a quotient $ \mathcal{U}(F, \phi)= F/\{ \frac{\partial \Phi}{\partial x_i=0}_{i=0,\cdots n}\} $. For instance, for $\Phi(x,y,z)= xyz-yzx$ we obtain $\mathcal{U}(F, \phi)=\mathbb{C}[x,y,z]$, the polynomial algebra in $3$ variables. The details of this discussion are not relevant to the context of this chapter, which is that of algebraic models of free loop spaces. We therefore refer the reader to~\cite{Ginz} for further details.

\medskip

In the above definition $A^!=\RHom_{A^{e}}(A,A^{e})$ is called the \emph{dualizing bimodule}. This is an $A$-bimodule using outer multiplication. Condition (\ref{cond1}) amounts to the following.
\begin{proposition}\label{ext-tor}(Van den Bergh Isomorphism~\cite{VdBergh}) Let $A$ be a Calabi-Yau DG algebra  of dimension $d$.  Then for all $A$-bimodules $N$ we have
\begin{equation} \label{iso-VdB}
HH_{d-*}(A, N) \simeq  HH^*(A,N).
\end{equation}
\end{proposition}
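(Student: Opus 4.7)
The plan is to chain together three isomorphisms: the standard derived tensor-Hom duality for perfect bimodules, the Calabi--Yau identification $A \simeq A^![d]$, and the definition of Hochschild homology/cohomology as $\Tor$/$\Ext$ over $A^e$.

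First, since $A$ is homologically smooth, I would fix a finitely generated DG-projective resolution $P \twoheadrightarrow A$ in $Mod(A^e)$, so that both Hochschild theories are computed from $P$:
\begin{equation*}
HH^*(A,N) \;\simeq\; H^*\bigl(\Hom_{A^e}(P,N)\bigr), \qquad HH_*(A,N) \;\simeq\; H_*\bigl(P \otimes_{A^e} N\bigr).
\end{equation*}
This reduces the problem to a statement about a perfect complex $P$ of $A^e$-modules.

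Next, the key technical step is the duality
\begin{equation*}
\Hom_{A^e}(P, A^e) \otimes_{A^e} N \;\xrightarrow{\simeq}\; \Hom_{A^e}(P, N),
\end{equation*}
which I would first verify for $P = A^e$ (where both sides are canonically $N$), then extend by taking shifts, direct sums, cones, and retracts to all perfect $A^e$-modules. Since $P$ is DG-projective, $\Hom_{A^e}(P, A^e)$ is DG-projective as well (by Lemma \ref{dual-perfect}), so the left-hand side computes the derived tensor product $A^! \otimes^L_{A^e} N$ and the right-hand side computes $\RHom_{A^e}(A, N)$. This gives
\begin{equation*}
\RHom_{A^e}(A,A^e) \otimes^L_{A^e} N \;\simeq\; \RHom_{A^e}(A, N).
\end{equation*}

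Finally, plug in the Calabi--Yau quasi-isomorphism $\psi : A \xrightarrow{\simeq} A^![d]$, equivalently $A^! \simeq A[-d]$ in $D(A^e)$. Substituting,
\begin{equation*}
\RHom_{A^e}(A,N) \;\simeq\; A^!\otimes^L_{A^e} N \;\simeq\; A[-d] \otimes^L_{A^e} N,
\end{equation*}
and taking cohomology in degree $*$ yields
\begin{equation*}
HH^*(A,N) \;\simeq\; H^*\bigl(A[-d] \otimes^L_{A^e} N\bigr) \;\simeq\; HH_{d-*}(A,N),
\end{equation*}
which is the desired Van den Bergh isomorphism.

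The main obstacle will be the derived tensor-Hom duality step: one must check that the natural map $\Hom_{A^e}(P, A^e) \otimes_{A^e} N \to \Hom_{A^e}(P,N)$ is a quasi-isomorphism for perfect $P$, which requires a devissage through finitely generated semi-free bimodules (built from shifts of $A^e$ by finitely many cone attachments, as in the proof of Lemma \ref{dual-perfect}) together with the fact (Proposition \ref{prop-ess}) that the functors $-\otimes_{A^e} N$ and $\Hom_{A^e}(-,N)$ preserve quasi-isomorphisms between DG-projective bimodules. Once this duality is in hand, the rest of the argument is formal.
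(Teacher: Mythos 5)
Your proposal follows essentially the same route as the paper: the paper's proof is precisely the chain $HH^*(A,N)\simeq H^*(\RHom_{A^e}(A,N))\simeq H^*(A^!\otimes^L_{A^e}N)\simeq H_*(A[-d]\otimes^L_{A^e}N)\simeq HH_{d-*}(A,N)$. The only difference is that you spell out the tensor-Hom duality step $\RHom_{A^e}(A,A^e)\otimes^L_{A^e}N\simeq\RHom_{A^e}(A,N)$ via devissage through perfect complexes, which the paper asserts without comment; your justification of that step is correct.
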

\begin{proof} We compute 
\begin{equation}
\begin{split}
HH^*(A,N)&\simeq \Ext^*_{A^e}(A,N)\simeq H^*(\RHom_{A^e}(A,N)) \\
& \simeq H^*(\RHom_{A^e}(A,A^e)\otimes^L_{A^e} N) \\ & \simeq H^* (A^!\otimes^L_{A^e}N) \simeq H_*(A[-d]\otimes_{A^e}^L  N) \\
& \simeq \Tor_*^{A^e}(A,N)\simeq HH_{d-*}(A,N) \\
\end{split}
\end{equation}
\end{proof}

Note that the choice of the $A$-bimodule isomorphism $\psi$ is important and it is characterized by the image of the unit  $\pi=\psi (1_A)\in A^!$. By definition, $\pi$ is the \emph{volume of the Calabi-Yau algebra $A$}. For a Calabi-Yau algebra $A$  with a volume $\pi$ and $N=A$, we obtain an isomorphism

\begin{equation}
D=D_\pi: HH_{d-*}(A, A) \rightarrow  HH^*(A,A).
\end{equation}

\noindent One can use $D$ to transfer the Connes operator $B$ from $ HH_{*}(A, A)$ to $HH^*(A,A)$, via
$$
\Delta=\Delta_{\pi}:= D\circ B \circ D^{-1}
$$

In the following lemmas $A$ is a Calabi-Yau algebra with a fixed volume $\pi$ and the associated isomorphism $D$.

\begin{lemma}\label{lem0} $f \in HH^*(A,A)$ and $a\in HH_*(A,A)$
$$
D(i_fa)=f\cup D a.
$$

\end{lemma}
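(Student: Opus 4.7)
The identity $D(i_f a) = f \cup Da$ is the statement that the Van den Bergh isomorphism $D$ is a morphism of $HH^*(A,A)$-modules, where $HH^*(A,A)$ acts on $HH_*(A,A)$ by cap product and on itself by cup product. My plan is to verify this by interpreting both sides in the derived category of $A^e$-modules and reducing the identity to the naturality of $\psi \otimes -$.

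Following the proof of Proposition \ref{ext-tor}, $D: HH_{d-*}(A,A) \to HH^*(A,A)$ is the composition
$$
A \otimes^L_{A^e} A \xrightarrow{\psi \otimes \id} A^![d] \otimes^L_{A^e} A \xrightarrow{\sim} \RHom_{A^e}(A,A)[d],
$$
the last arrow being the duality quasi-isomorphism $\RHom_{A^e}(A,A^e) \otimes^L_{A^e} A \to \RHom_{A^e}(A,A)$, which is available because $A$ is homologically smooth (Lemma \ref{dual-perfect}). I would then identify the two actions of a class $f \in HH^k(A,A) = \Ext^k_{A^e}(A,A)$ at the level of the bar resolution $B(A,A,A) \to A$: a cocycle representative $\tilde f: B(A,A,A) \to A$ of degree $k$ lifts, by cofibrancy, to an $A^e$-linear chain endomorphism $\hat f: B(A,A,A) \to B(A,A,A)[k]$ covering $\tilde f$. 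A direct comparison with formula (\ref{contr-1t}) shows that $\id_A \otimes_{A^e} \hat f$ realizes the cap product $i_f$ on $C_*(A,A) = A \otimes_{A^e} B(A,A,A)$, while under $HH^*(A,A) \simeq H^*(\RHom_{A^e}(A,A))$ the cup product $f \cup -$ becomes pre-composition with $\hat f$.

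With these identifications in place the proof becomes formal. The square
$$
\xymatrix{
A \otimes^L_{A^e} A \ar[r]^-{\id \otimes \hat f} \ar[d]_-{\psi \otimes \id} & A \otimes^L_{A^e} A[k] \ar[d]^-{\psi \otimes \id} \\
A^![d] \otimes^L_{A^e} A \ar[r]^-{\id \otimes \hat f} & A^![d] \otimes^L_{A^e} A[k]
}
$$
commutes by naturality of $\psi \otimes -$, and the duality quasi-isomorphism carries the bottom arrow to pre-composition with $\hat f$, i.e.\ to $f \cup -$. Chasing the class of $a$ along the two routes from the upper-left to the lower-right corner then gives exactly $D(i_f a) = f \cup Da$. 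The main technical obstacle is the compatibility check of the previous paragraph: matching the derived-categorical operation $\id_A \otimes^L_{A^e} \hat f$ with the explicit contraction formula (\ref{contr-1t}), which amounts to a careful bookkeeping of signs against the semi-free resolution $B(A,A,A)$; once done, everything else is pure naturality.
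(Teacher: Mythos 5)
Your proposal is correct and follows essentially the same route as the paper: the paper also passes to a projective resolution $P$ of $A$, models the cap product as the evaluation pairing of $\End(P)$ on $P\otimes_{A^e}P$, models the cup product as composition in $\End(P)\simeq P^\vee\otimes_{A^e}P$, and deduces the identity from the commutativity of the square built from the duality map $\phi\colon P\to P^\vee$ induced by $\psi$ — which is exactly your naturality square, with your lift $\hat f$ playing the role of the $\End(P)$-action. The only cosmetic difference is that you fix the bar resolution and phrase the module actions via a lifted endomorphism $\hat f$, while the paper keeps $P$ abstract and displays the actions as pairings; the technical compatibility of the explicit formula (\ref{contr-1t}) with the derived description, which you flag as the remaining check, is likewise asserted rather than verified in the paper.
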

\begin{proof}
To prove the lemma we use the derived description of Hochschild homology and cohomology, and of the cap and cup product. Let $(P,d)$ be a projective resolution of $A$. Then $HH_*(A,A)$ is computed by the complex $(P\otimes_{A^{e}} P, d)$, and similarly $HH^*(A,A)$ is computed by the complex $(\End (P), \ad(d)= [-,d])$. Here $\End (P)=\oplus_{r\in \Z} \Hom_{A^{e}}(P, P[r])$. Then the cap product corresponds to the natural pairing
\begin{equation}
ev: (P\otimes_{ A^e} P ) \otimes_\kk \End (P) \rightarrow P\otimes_{A^e} P
\end{equation}
given by $ev: (p_1\otimes p_2)\otimes f\mapsto p_1\otimes f(p_2)$. The quasi-isomorphism $\psi: A\rightarrow A^![d]$ yields a morphism $\phi: P\rightarrow P^\vee$.  Let us explain this in detail.

Using the natural identification $P^\vee \otimes _{A^e}P = \End(P)$, we have a commutative diagram

\begin{equation}\label{diagcom1}
\xymatrix{  \End (P) \otimes_\kk  (P\otimes_{ A^e} P ) \ar[rr]^-{ev}\ar[d]^{id \otimes(\phi\otimes id)}&&  P\otimes_{A^e} P \ar[d]^-{\phi\otimes\id}\\
\End (P) \otimes_\kk \End (P)\ar[rr]^-{\text{composition}} & & \End (P)=P^\vee \otimes _{A^e}P}
\end{equation}
The evaluation map  is defined by $ev (\psi\boxtimes (x\otimes y)):= x\otimes \psi (y)$. After passing to (co)homology, $\phi\otimes\id$ becomes $D$, the composition induces the cup product and $ev$ is the contraction (cap product), hence
$$
D(i_f a)= f\cup Da
$$
which proves the lemma.
\end{proof}

\begin{lemma}\label{lem1}  For $f, g\in HH^*(A,A)$ and $a\in HH_*(A,A)$ we have
\begin{equation*}
\begin{split}
[f,g]\cdot a&= (-1)^{|f|} B((f\cup g)a)-f\cdot  B(g\cdot a)+ (-1)^{(|f|+1)(|g|+1)}g\cdot B(f\cdot a)\\& +(-1)^{|g|} (f\cup g)\cdot B(a).
\end{split}
\end{equation*}
\end{lemma}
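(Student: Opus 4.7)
The plan is to derive this identity directly from the noncommutative calculus relations recalled just before the statement, namely
\[
L_f = [B, i_f], \qquad i_{f\cup g} = i_f \circ i_g, \qquad i_{[f,g]} = [L_f, i_g],
\]
together with the graded commutativity of the cup product on $HH^*(A,A)$. Since $[f,g]$ is a cohomology class and $B$ is well defined on $HH_*(A,A)$, it suffices to establish the equality at the level of (co)homology.

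The first step is to expand the outer graded commutator:
\[
i_{[f,g]}(a) = L_f\bigl(i_g(a)\bigr) - (-1)^{(|f|-1)|g|}\, i_g\bigl(L_f(a)\bigr).
\]
The second step is to substitute Cartan's formula $L_f = B\circ i_f - (-1)^{|f|}\, i_f \circ B$ into both terms. This produces exactly four monomials,
\[
B\, i_f\, i_g (a), \qquad i_f\, B\, i_g (a), \qquad i_g\, B\, i_f (a), \qquad i_g\, i_f\, B (a),
\]
each with a definite prefactor coming from the Koszul rule. The third step is to collapse $i_f \circ i_g = i_{f\cup g}$ in the first term and, in the last term, use $i_g \circ i_f = i_{g\cup f} = (-1)^{|f||g|}\, i_{f\cup g}$, the second equality holding because $\cup$ is graded commutative on $HH^*(A,A)$. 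Each of the four terms then matches, up to sign, one of the four terms on the right-hand side of the asserted identity.

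The main obstacle is not conceptual but purely bookkeeping: one has to verify that the sign $(-1)^{(|f|-1)|g|}$ from expanding $[L_f, i_g]$, combined with the two signs coming from $L_f = Bi_f - (-1)^{|f|} i_f B$ and with $(-1)^{|f||g|}$ from the reordering of the cup product, reduces exactly to the coefficients $(-1)^{|f|}$, $-1$, $(-1)^{(|f|+1)(|g|+1)}$, and $(-1)^{|g|}$ appearing in the statement. This is a routine arithmetic check using $|i_f| = |f|$, $|B|=-1$, and $|L_f| = |f|-1$, after which the identity follows as an equality of classes in $HH_*(A,A)$. Notice also that this proof is conceptual in nature and does not depend on the Calabi--Yau hypothesis at all; it is a property of the calculus structure $(i_f, L_f, B)$ alone, which will be relevant for the next step where this identity is transferred through the isomorphism $D$ to produce the BV identity on $HH^*(A,A)$.
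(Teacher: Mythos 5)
Your proposal is correct and follows essentially the same route as the paper's own proof: expand $i_{[f,g]}=[L_f,i_g]$, substitute Cartan's formula $L_f=[B,i_f]$, and collapse the resulting four monomials using $i_{f\cup g}=i_f\circ i_g$ together with graded commutativity of the cup product. The paper's computation is exactly this (with signs left implicit under the Koszul convention), so the only thing you add is the explicit sign bookkeeping and the explicit remark that the argument uses nothing beyond the calculus structure.
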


\begin{proof} We compute     
\begin{equation}\label{BV1}
    \begin{split}
       i_{[f,g]}&=L_f i_g - i_gL_f   \\
        &= i_fBi_g-Bi_fi_g -i_gBi_f-i_gi_fB\\
        &=i_fBi_g-i_gBi_f-Bi_{f\cup g}-i_gi_fB
    \end{split}
\end{equation}
\end{proof}

\begin{corollary}\label{cor1}
For all $f,g\in  HH^*(A,A)$ and $a\in HH_*(A,A)$, we have
\begin{equation*}
\begin{split}
[f,g]\cup D(a)&=(-1)^{|f|} \Delta(f\cup g\cup D a) -f\cup \Delta (g\cup D(a))\\& +(-1)^{(|f|+1)(|g|+1)}g\cup \Delta (f\cup D(a))- f \cup g \cup  DB(a).
\end{split}
\end{equation*}

\end{corollary}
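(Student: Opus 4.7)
The proof of Corollary \ref{cor1} is essentially a translation of the Hochschild chain identity in Lemma \ref{lem1} into the language of the Hochschild cochains via the Van den Bergh isomorphism $D : HH_{d-*}(A,A) \to HH^*(A,A)$ and the transferred Connes operator $\Delta = D \circ B \circ D^{-1}$.

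My plan is to apply the isomorphism $D$ to both sides of the identity
\begin{equation*}
\begin{split}
[f,g]\cdot a &= (-1)^{|f|} B((f\cup g)\cdot a) - f\cdot B(g\cdot a) \\
&\quad + (-1)^{(|f|+1)(|g|+1)} g\cdot B(f\cdot a) + (-1)^{|g|} (f\cup g)\cdot B(a),
\end{split}
\end{equation*}
and then exploit Lemma \ref{lem0} to commute $D$ past each cap product. Concretely, on the left one gets $D([f,g]\cdot a) = [f,g]\cup D(a)$. On the right, the first term contains $B((f\cup g)\cdot a)$; applying $D$ and using $\Delta D = D B$ yields $(-1)^{|f|}\Delta(D((f\cup g)\cdot a)) = (-1)^{|f|}\Delta((f\cup g)\cup D(a))$ after one more application of Lemma \ref{lem0}. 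The second and third terms are treated symmetrically: $D(f\cdot B(g\cdot a)) = f\cup D(B(g\cdot a)) = f\cup \Delta(D(g\cdot a)) = f\cup\Delta(g\cup D(a))$, and similarly for the term with $f$ and $g$ interchanged.

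The fourth term is slightly different because it has the bare $B(a)$ sitting inside: $D((f\cup g)\cdot B(a)) = (f\cup g)\cup D(B(a))$, and here one simply leaves the composite $DB(a)$ as a single symbol (it equals $\Delta(D(a))$, but the statement records it in the form $DB(a)$). Assembling these four computations produces exactly the right-hand side of the corollary, modulo a careful tracking of the Koszul signs coming from the associativity of $\cup$ and from the fact that $D$ has degree $d$ and therefore twists signs predictably as it moves through the expression.

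The only real work is sign bookkeeping; there is no new geometric or homological content beyond what is already encoded in Lemma \ref{lem0} (the intertwining $D \circ i_f = f \cup D(-)$) and Lemma \ref{lem1} (the Cartan-type identity for $i_{[f,g]}$). The main potential obstacle is making sure the sign $(-1)^{|g|}$ in the last term of Lemma \ref{lem1} is compatible with the sign that $D$ introduces when passing through the cap product, so that the surviving sign on $(f\cup g)\cup DB(a)$ matches the one stated in the corollary.
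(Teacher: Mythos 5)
Your proposal is correct and follows exactly the paper's own argument: apply $D$ to the identity of Lemma \ref{lem1}, use Lemma \ref{lem0} to rewrite each $D(i_h(-))$ as $h\cup D(-)$, and use $DB=\Delta D$ to convert the Connes operator into $\Delta$, leaving the last term in the form $f\cup g\cup DB(a)$. The only caveat, which you correctly flag, is the Koszul sign bookkeeping in the final term, where the paper itself is slightly loose between its intermediate display and the stated conclusion.
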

\begin{proof}

After applying $D$ to the identity of the previous lemma we obtain
\begin{equation*}
\begin{split}
D(i_{[f,g]}a)& =(-1)^{|f|} D(Bi_{f\cup g}a)-D(i_fBi_g(a))+(-1)^{(|f|+1)(|g|+1)}D(i_gBi_f(a)) \\ &+(-1)^{|g|}D(i_fi_gB(a)).
\end{split}
\end{equation*}
By Lemma~\ref{lem0} and $DB= \Delta D $, this reads
\begin{equation*}
\begin{split}
[f,g]\cup D(a)= & \, (-1)^{|f|} \Delta D(i_{f\cup g}a) -f\cup \Delta D (i_g(a))\\
&  +(-1)^{(|f|+1)(|g|+1)}g\cup \Delta D(i_f(a)) - g\cup f \cup  DB(a).
\end{split}
\end{equation*}
Once again by Lemma~\ref{lem0} we get
\begin{equation*}
\begin{split}
[f,g]\cup D(a)=&\, (-1)^{|f|} \Delta(f\cup g\cup D a)  -f\cup \Delta (g\cup D(a))\\&+(-1)^{(|f|+1)(|g|+1)}g\cup \Delta (f\cup D(a))+(-1)^{|g|} f\cup g \cup  DB(a).
\end{split}
\end{equation*}

\end{proof}

\begin{theorem} \label{main-theo1} Let $A$ be a Calabi-Yau algebra with volume $\pi\in A^!$. Then $(HH^*(A,A), \cup, \Delta)$ is a BV-algebra, i.e. 
\begin{equation} \label{BV-cond-bis}
[f,g]=(-1)^{|f|}\Delta(f\cup g)-(-1)^{|f|}\Delta(f)\cup g-f\cup \Delta(g).
\end{equation}

\end{theorem}

\begin{proof}
In the statement of the previous lemma choose  $a\in HH_d(A,A)$ such that $D(a)=1\in HH^0(A,A)$. The identity (\ref{BV-cond-bis})  follows since $B(a)=0$ for  obvious degree reasons.
\end{proof}

\subsection{Poincar\'e duality groups and chains on the Moore based loop space}\label{secmoore}

Let us finish this section with some interesting examples of  DG Calabi-Yau algebras.  We will also discuss chains on the Moore based loop space. This example plays an important role in symplectic geometry in relation with a generation result for a particular type of Fukaya category  called \emph{the wrapped Fukaya category} (see~\cite{abou} for more details). One can then compute the Hochschild homology of wrapped Fukaya categories using the Burghelea-Fiedorowicz-Goodwillie theorem. This theorem implies that the Hochschild homology of the Fukaya category of a closed oriented manifold is isomorphic to the homology of the free loop space of the manifold.   
 
We start with a more elementary example, namely that of \emph{Poincar\'e duality groups}, which include fundamental groups of closed oriented aspherical manifolds. Closed oriented irreducible 3-manifolds are aspherical, hence they provide us with a large and interesting class of Poincar\'e duality groups.

\begin{proposition} \label{pro-PDG}Let $G$ be finitely generated oriented Poincar\'e duality group of dimension $d$. Then
$\kk[G]$ is a Calabi-Yau algebra of dimension $d$, therefore $HH^*(\kk[G],\kk[G])$ is a BV-algebra.
\end{proposition}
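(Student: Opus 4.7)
The plan is to verify directly that $\kk[G]$ satisfies the two defining conditions of a $d$-dimensional Calabi--Yau DG algebra (concentrated in degree zero): homological smoothness, and the existence of a self-dual bimodule quasi-isomorphism $\psi:\kk[G]\overset{\simeq}{\longrightarrow}\kk[G]^![d]$. Once both are established, Theorem \ref{main-theo1} immediately produces the BV structure on $HH^*(\kk[G],\kk[G])$.

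For homological smoothness, I would exploit that a finitely generated oriented $\mathrm{PD}_d$ group is of type $\mathrm{FP}_d$: the trivial $\kk[G]$-module $\kk$ admits a length-$d$ resolution $P_\bullet\twoheadrightarrow \kk$ by finitely generated projective $\kk[G]$-modules. Induction along the diagonal algebra map $\iota:\kk[G]\hookrightarrow \kk[G]^e$, $g\mapsto g\otimes g^{-1}$ (for which $\kk[G]^e\otimes_{\iota(\kk[G])}\kk\cong \kk[G]$ as bimodules) converts $P_\bullet$ into a length-$d$ resolution of $\kk[G]$ by finitely generated projective $\kk[G]$-bimodules. Hence $\kk[G]$ is perfect over $\kk[G]^e$.

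For the Van den Bergh data, tensor--Hom adjunction along $\iota$ gives
$$\kk[G]^! \;=\; \RHom_{\kk[G]^e}(\kk[G],\kk[G]^e) \;\simeq\; \RHom_{\kk[G]}(\kk,\,\kk[G]^e_\iota),$$
where $\kk[G]^e_\iota$ denotes $\kk[G]^e$ viewed as a left $\kk[G]$-module through $\iota$. Poincar\'e duality for the oriented $\mathrm{PD}_d$ group $G$ provides a canonical natural isomorphism $H^*(G;M)\cong H_{d-*}(G;M)$ for all $\kk[G]$-modules $M$; applied to $M=\kk[G]^e_\iota$ it yields
$$H^*(\kk[G]^!)\cong H_{d-*}(G;\kk[G]^e_\iota) \;\cong\; \Tor^{\kk[G]^e}_{d-*}(\kk[G],\kk[G]^e),$$
which is $\kk[G]$ concentrated in degree $d$, since $\kk[G]^e$ is free over itself. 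Cap product with a cycle representative of the fundamental class $[G]\in H_d(G;\kk)$ then lifts this abstract isomorphism to a concrete bimodule quasi-isomorphism $\psi:\kk[G]\to\kk[G]^![d]$.

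The main obstacle is the self-duality $\psi^!=\psi[d]$, which does not follow for free from Poincar\'e duality. It amounts to checking that cap product with $[G]$ commutes with derived dualization on $\kk[G]$-bimodules up to the expected sign; this is the algebraic shadow of the classical symmetry of the Poincar\'e pairing on a closed oriented $d$-manifold, and can be arranged by choosing the bar-type chain representative of $[G]$ to be invariant under the canonical anti-involution. Once this compatibility is unwound, the hypotheses of Theorem \ref{main-theo1} are met and the BV structure on $HH^*(\kk[G],\kk[G])$ follows.
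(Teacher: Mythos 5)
Your overall route coincides with the paper's: both proofs take the finite projective resolution $P_\bullet \twoheadrightarrow \kk$ supplied by the finiteness and duality hypotheses on $G$, induce it up along the diagonal map $g \mapsto g \otimes g^{-1}$ to obtain a finitely generated projective bimodule resolution of $\kk[G]$ (hence homological smoothness), and then use $\Ext^*_{\kk[G]}(\kk,\kk[G]) \cong \kk$ concentrated in degree $d$ to identify $\kk[G]^! \simeq \Hom_{\kk[G]}(P,\kk[G]^e) \cong P^\vee \otimes_{\kk[G]} \kk[G]^e$ with a shifted resolution of $\kk[G]$. Your packaging of this last step differs slightly: you compute $H^*(\kk[G]^!)$ abstractly as $H^*(G;\kk[G]^e_\iota)$ and then invoke cap product with the fundamental class, whereas the paper compares the two explicit bimodule resolutions $\kk[G]^e \otimes_{\kk[G]} P$ and $\Hom_{\kk[G]}(P,\kk[G]) \otimes_{\kk[G]} \kk[G]^e$ of $\kk[G]$ and takes $\phi$ to be the resulting homotopy equivalence. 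These are the same argument in different clothing: the statement that $P^\vee[d]$ again resolves $\kk$ \emph{is} the duality you are invoking.

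The one genuine gap is the self-duality condition $\psi^! = \psi[d]$. You correctly flag it as the main obstacle, but ``choosing the bar-type chain representative of $[G]$ to be invariant under the canonical anti-involution'' is a gesture, not an argument: you do not say which anti-involution acts on which complex, why an invariant representative exists, or why invariance of a cycle implies the identity $\psi^! = \psi[d]$ in the derived category of bimodules. The paper closes this point by an explicit computation: writing $Q = \kk[G]^e \otimes_{\kk[G]} P$, it exhibits the canonical biduality map $i: Q \to Q^{!!}$ and checks directly that $\phi^! \circ i = \phi$ after the shift by $d$, which is the precise meaning of the required symmetry. You should either reproduce that biduality argument or observe that for the stated consequence --- the BV structure on $HH^*(\kk[G],\kk[G])$ --- the condition $\psi^!=\psi[d]$ is never used: as the paper notes after Theorem \ref{main-theo1}, only the derived equivalence $A \simeq A^![d]$ enters that proof. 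So your argument as written does yield the BV algebra structure, but it does not yet establish the full Calabi--Yau property asserted in the proposition.
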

\begin{proof}

First note that $\kk[G]$ is only an ordinary algebra without grading and differential. The hypothesis that
$G$ is a finitely generated oriented Poincar\'e duality group of dimension $d$ means that $\kk$ has a bounded finite projective resolution $P=\{P_d\rightarrow \cdots P_1\rightarrow P_0\} \twoheadrightarrow \kk$ as a left $\kk[G]$-module, and $H^d(G,\kk[G])\simeq \kk$ as a $\kk[G]$-module. Here $\kk$ is equipped with the trivial action and $\kk[G]$ acts on $H^d(G,\kk[G])$ from the left via the coefficient module. In particular we have
\begin{equation}
 \Ext_{\kk[G]}^i (\kk,\kk[G]) \simeq  \left\{
      \begin{array}{ll}
          \kk , & i=d,\\
         0 , & \hbox{otherwise.}
      \end{array}
    \right.
\end{equation}
In other words the resolution $P$ has the property that $P^\vee:= \Hom_{\kk[G]} (P,\kk[G])$, after a shift in degree by $d$, is also a resolution of $\kk$ as a right $\kk[G]$-module (see~\cite{Brown} for more details).

Note that using the map $g\to (g,g^{-1})$ we can turn $\kk[G]^e$ into a right $\kk[G]$-module.  More precisely $(g_1\otimes g_2)g:=g_1g\otimes g^{-1}g_2$. The tensor product $\kk[G]^e\otimes_{\kk[G]} \kk$ is isomorphic to  $\kk[G]$  as a left $\kk[G]^e$-module. The isomorphism is given by $(g_1\otimes g_2)\boxtimes 1\mapsto g_1g_2$. Similarly $\kk[G]^e$ can be considered as a left $\kk[G]$-module using the action $g(g_1\otimes g_2):=gg_1\otimes g_2g^{-1}$ and once again 
 $\kk\otimes_{\kk[G]} \kk[G]^e\simeq \kk[G]$ as a  right $\kk[G]^e$-module using the isomorphism  $1 \boxtimes (g_1\otimes g_2)\mapsto g_2g_1$.
 
It is now clear that  $\kk[G]^e \otimes_{\kk[G]}P_d\rightarrow \cdots \kk[G]^e \otimes_{\kk[G]}P_1 \rightarrow \kk[G]^e \otimes_{\kk[G]}P_0 \twoheadrightarrow \kk[G]\otimes_{\kk[G]} \kk \simeq \kk[G]$ is a projective resolution of $\kk[G]$ as a $\kk[G]$-bimodule, proving that $\kk[G]$ is homologically smooth.  Similarly $\Hom_{\kk[G]} (P,A)\otimes _{\kk[G]}\kk[G]^e$ is a projective resolution of $\kk[G]$ as $\kk[G]$-bimodule. Therefore we have a homotopy equivalence
$$
\phi: \kk[G]^e\otimes_{\kk[G]}P\to\Hom_{\kk[G]} (P,A)\otimes _{\kk[G]}\kk[G]^e  .  
$$

Now if we take $Q= \kk[G]^e\otimes_{\kk[G]}P$ as a DG-projective resolution of  $\kk[G]$ as $\kk[G]$-bimodule, then
$ \kk[G]^!= \Hom_{\kk[G]^e}( \kk[G]^e\otimes_{\kk[G]}P, \kk[G]^e ) \simeq \Hom_{\kk[G]}( P,\kk[G]^e)$, where the right $\kk[G]$-module structure of $\kk[G]^e$ was described above. Since $P$ is $\kk[G]$-projective, the natural map  
$$ 
\Hom_{\kk[G]}( P,\kk[G])\otimes _{\kk[G]} \kk[G]^e \overset{\simeq}{\longrightarrow }\Hom_{\kk[G]}( P,\kk[G]^e)
$$ 
is an isomorphism. Therefore $\phi$ is nothing but an equivalence $\kk[G]\overset{\phi}{\simeq} \kk[G]^!$ in the derived category of $\kk[G]$-bimodules. It remains to prove that $\phi^!\simeq \phi[d]$ in the derived category of $\kk[G]$-bimodules.  We have
\begin{equation}
\begin{split}
\phi^!=\phi^\vee: \Hom_{\kk[G]^e}( \Hom_{\kk[G]}&(P,\kk[G])\otimes_{\kk[G]}  \kk[G]^e,   \kk[G]^e) \\
& \rightarrow  \Hom_{\kk[G]^e}(\kk[G]^e\otimes_{\kk[G]}P,\kk[G]^e)\simeq \kk[G]^!.
\end{split}
\end{equation}

On the other hand we have the natural inclusion map
\begin{equation}
\begin{split}
i: \kk[G]^e\otimes_{\kk[G]}P\to &\Hom_{\kk[G]^e}(\Hom_{\kk[G]^e}(\kk[G]^e\otimes_{\kk[G]}P, \kk[G]^e))\\ &\simeq  \Hom_{\kk[G]^e}( \Hom_{\kk[G]}(P,\kk[G])\otimes_{\kk[G]}  \kk[G]^e,   \kk[G]^e)
\end{split}
\end{equation}
and one can easily check that $\phi^!\circ i=\phi$ after a shift in degree by $d$. This proves that $\phi^! \simeq \phi[d] $ in the derived category.

\end{proof}

\begin{remark}
In the case when $G= \pi_1(M)$ is the fundamental group of an aspherical manifold $M$, Vaintrob~\cite{Vaint} has proved that the BV-structure on the Hochschild (co)homology $HH^{*+d}(\kk[G],\kk[G]) \simeq  HH_*(\kk[G],\kk[G])$ corresponds to the Chas-Sullivan BV-structure on $H_*(LM,\kk) $.
\end{remark}

Let $(X,\ast)$ be a pointed finite CW complex with Poincar\'e duality. The Moore loop space of $X$,  $\Omega X=\{\gamma:[0,s]\to X \, | \, \gamma(0)=\gamma(s)=*, \, s\in \R^{>0}\}$ is equipped with the standard concatenation, which is strictly associative. Therefore the cubic chains $C_*(\Omega X)$ can be made into a strictly associative algebra using the Eilenberg-Zilber map and the concatenation.
In~\cite{Ginz} there is a  sketch of proof that $C_*(\Omega X)$ is homologically smooth. Here we prove more using a totally different method.  

\begin{proposition} For a Poincar\'e duality finite CW-complex $X$, $C_*(\Omega X)$ is a Calabi-Yau DG algebra.
\end{proposition}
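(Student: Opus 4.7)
My plan is to follow the template of Proposition \ref{pro-PDG}, with the Moore based path space $P_*X := \{\gamma : [0,s] \to X \mid \gamma(0) = *\}$ taking the place of the projective $\kk[G]$-resolution of $\kk$. The space $P_*X$ is contractible and carries a strictly free left $\Omega X$-action by prepending loops, fibering over $X$ via endpoint evaluation. A finite CW-structure on $X$ lifts to an $\Omega X$-equivariant cellular structure on $P_*X$, so that the cellular chains $C^{\cW}_*(P_*X)$ form a finitely generated semifree left $A$-module, quasi-isomorphic (since $P_*X$ is contractible) to the trivial module $\kk$.

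To obtain an $A$-bimodule resolution I would pass to the fibered product $Y := P_*X \times_X P_*X$; this is the homotopy fibre of the diagonal $\Delta: X \to X \times X$, so $Y \simeq \Omega X$, and the equivalence $(\gamma_1, \gamma_2) \mapsto \gamma_1 \cdot \overline{\gamma}_2$ identifies the natural left $\Omega X \times \Omega X$-action on $Y$ with the $A^e$-module structure on the diagonal bimodule $A$ (the second factor acting as $A^{op}$). Since $Y \to X$ is a fibration with fibre $\Omega X \times \Omega X$ and the CW-structure on $X$ lifts equivariantly, one obtains an isomorphism $P := C^{\cW}_*(Y) \cong A^e \otimes C^{\cW}_*(X)$ of graded $A^e$-modules with a twisted differential. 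Finiteness of the CW-structure on $X$ makes $P$ a finitely generated semifree $A^e$-module, and the map $P \to A$ induced by concatenation is a quasi-isomorphism; this establishes homological smoothness.

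For the Calabi-Yau isomorphism I would dualize: the identification above gives $A^! \simeq \Hom_{A^e}(P, A^e) \cong A^e \otimes C^{*}_{\cW}(X)$, and Poincar\'e duality on $X$ provides a quasi-isomorphism $C^{\cW}_*(X) \to C^{d-*}_{\cW}(X)$ by capping with a fundamental class $[X]$; tensoring with $A^e$ produces the required $A$-bimodule quasi-isomorphism $\psi: A \to A^![d]$. The self-duality condition $\psi^! = \psi[d]$ then reduces to the graded symmetry of the intersection pairing $\cup [X] : C^*(X) \otimes C^*(X) \to \kk$ on the oriented Poincar\'e duality space $X$, which follows from graded commutativity of the cup product.

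The main technical obstacle lies in the bimodule-equivariant realization of step two: constructing the isomorphism $C^{\cW}_*(Y) \cong A^e \otimes C^{\cW}_*(X)$ compatibly with the differentials, and showing that the Poincar\'e duality map $\cap [X]$ genuinely upgrades, after tensoring with $A^e$, to an $A$-bimodule quasi-isomorphism whose derived dual satisfies $\psi^! = \psi[d]$. The sign bookkeeping required here is delicate, but the structural picture closely parallels the group-algebra case of Proposition \ref{pro-PDG}.
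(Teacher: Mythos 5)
Your overall strategy --- model the diagonal bimodule by a path-space construction over $X$, induct over the finite cell structure to get a finitely generated semifree resolution, then feed Poincar\'e duality of $X$ through the duality $\Hom_{A^e}(-,A^e)$ --- is the same as the paper's, but the two arguments diverge in how they manufacture the bimodule resolution. The paper first rigidifies $\Omega X$ to a topological group $G$ so that $A=C_*(G)$ becomes a strict differential graded Hopf algebra, builds a one-sided semifree resolution $V\otimes A\to C_*(E)\simeq\kk$ of the trivial module by induction on the skeleta of $X$ (following F\'elix--Thomas), and then converts it into a bimodule resolution by tensoring with $A^e$ along $Ad_0=(\id\otimes S)\delta$, exactly as in the group-ring case; the derived equivalence $A\simeq A^![d]$ is then produced from the F\'elix--Halperin--Thomas equivalence $C_*(X)\simeq B(\kk,A,\kk)$ together with an explicit evaluation map against a cycle representing the fundamental class. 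Your route through the fibered product $P_*X\times_X P_*X$ avoids the antipode, but it must then reprove at the chain level that this space models the diagonal bimodule compatibly with the Eilenberg--Zilber products, which is a comparable amount of strictification work.

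The one place where your sketch has a genuine gap is the assertion that ``a finite CW-structure on $X$ lifts to an $\Omega X$-equivariant cellular structure on $P_*X$.'' Since $\Omega X$ is a topological monoid rather than a discrete group, there is no literal equivariant CW structure to lift; what is true, and what the paper actually proves, is that $C_*(P_*X)$ admits a finitely generated semifree model $V\otimes A$ with one generator per cell of $X$, built cell by cell using the local triviality of the path fibration over each cell, a choice of lifts $w^n_\alpha$ of the relative cycles, and a five-lemma argument. That inductive construction is the technical core of the proposition and cannot be absorbed into the word ``lifts.'' Separately, your reduction of the self-duality condition $\psi^!=\psi[d]$ to graded symmetry of the intersection pairing is plausible but unverified; note that the paper itself only establishes the equivalence $A\simeq A^![d]$ in this proof and remarks afterwards that the second Calabi--Yau condition is not used in deriving the BV structure on $HH^*(A,A)$.
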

\begin{proof} The main idea of the proof  is essentially taken from~\cite{FT4}. Let $A=C_*(\Omega X)$ be the complex of cubic singular chains on  the Moore loop space. By composing the Eilenberg-Zilber and concatenation maps $C_*( \Omega X) \otimes C_*(\Omega X) \overset{EZ}{\longrightarrow}  C_*(\Omega X\times \Omega X)
\overset{concaten.}{\longrightarrow} C_*(\Omega X)$ one can define an associative product on $A$ . This product is often called \emph{the Pontryagin product}.
One could switch to the simplicial singular chain complex of the standard based loop space $\{\gamma:[0,1]\to X\, | \, \gamma(0)=\gamma(1)=*\}$,
but then one would have to work with $A_\infty$-algebras, $A_\infty$-bimodules and their derived category. All these work nicely~\cite{KS,Malm} and the reader may wish to write down the details in this setting. 

Note that $A$ has additional structure. First of all, the composition of the Alexander-Whitney and of the diagonal maps $C_*(\Omega X)\overset{diagonal}{\longrightarrow}  C_*(\Omega X\times \Omega X)
\overset{A-W}{\longrightarrow} C_*(\Omega X)\otimes C_*(\Omega X)$ endows $A$ with a coassociative coproduct, which together with the Pontryagin product makes $C_*(\Omega X)$ into a bialgebra.
One can consider the inverse map on $\Omega X$ which makes the bialgebra $C_*(\Omega X)$ into a differential graded Hopf algebra up to homotopy. In order to get a strict differential graded Hopf algebra, one constructs a topological group $G$ which is homotopy equivalent to $\Omega X$ (see~\cite{Kan,HessTonks}).  This can be done, and one can even find a simplicial topological group homotopy equivalent to $\Omega X$. Therefore from now on we assume that $\Omega  X=G $ is a topological group and $C_*(\Omega X) $ is a differential graded Hopf algebra $(A, \cdot, \delta, S)$ with coproduct $\delta$ and antipode map $S$.  

First we prove that $A$ has a finitely generated semifree resolution as  an $A$-bimodule.  The proof which is essentially taken from~\cite{FT4}  (Proposition 5.3) relies on the cellular structure of $X$.
Consider the path space $E=\{\gamma:[0,s]\rightarrow X \, |Ê\, \gamma(s)=\ast\}$.  Using the concatenation of paths and loops one can define an action of $\Omega X$ on $E$, and thus $C_*(E)$  becomes a $C_*(\Omega X)$-module. This action translates to an action of $A$ on $E$ which is from now on an $A$-module. 
Let $ G=\Omega X \rightarrow E\rightarrow X$ be the path space fibration of $X$.  We will construct a finitely generated semifree resolution  of $C_*(E)$ as an $A$-module  which,  since $E$ is contractible, provides us with a finitely generated semifree resolution of  $\kk\simeq C_*(E)$.  Now  by tensoring this resolution with $A^e$ over $A$ we obtain a finitely generated semifree resolution of $A$ as $A$-bimodule.  Here the $A$-module structure of $A^e$  is defined via the composite $Ad_0:= (A\otimes S) \delta : A\to A^e$, similar to the case of Poincar\'e duality groups (see the proof Proposition~\ref{pro-PDG}).

The semifree resolution of $ C_*(E)$ is constructed as follows. Let $X_1\subset X_2 \subset \cdots \subset X_m$ be the skeleta of $X$,  let $D_n=\coprod D_\alpha^n$ be the disjoint union of its $n$-cells, and let $\Sigma_n=\coprod S_\alpha^{n-1}$ be the disjoint union of their boundaries.
Let $V_n=H_*(X_n,X_{n-1})$ be the free $\kk$-module on the basis $v_\alpha^n$. Using the cellular structure of $X$  we construct an $A=C_*(G)$-linear quasi-isomorphism $\phi: (V\otimes A)\rightarrow C_*(E)$ where
$V=\oplus_n V_n$, inductively from the restrictions $\phi_n=\phi | \oplus_{i\leq n} V_i \otimes A\rightarrow C_*(E_n)$. The induction step $n-1$ to $n$  goes as follows. Let $f: (D_n,\Sigma_n )\rightarrow  (X_n,X_{n-1})$ be the characteristic map. Since the (homotopy) $G$-fibration $E$ can be trivialized over $D^n$, one has a homotopy equivalence of pairs
$$
\Phi: (D_n,\Sigma_n)\times G\rightarrow (E_n,E_{n-1}),
$$
where $E_i= \pi^{-1}(X_i)$. We have a commutative diagram 
\begin{equation}
\xymatrix{  &   C_*(E_n)\ar[d]_q \\ C_*(D_n,\Sigma_n)\otimes C_*(G) \ar[d]\ar[r]^-{ \Phi_*\circ EZ} & C_*(E_n,E_{n-1})\ar[d]_-{\pi_*} \\
                 C_*(D_n,\Sigma_n) \ar[r]_-{f_*} & C_*(X_n,X_{n-1}) }
\end{equation}
whose horizontal arrows are quasi-isomorphisms. Here $q$ is the standard projection map and $\pi$ is the fibration map. Since $q$ is surjective there is an element $w^n_\alpha \in C_*(E_n)$ such that
$q_*(w_n^\alpha)= \Phi_*\circ EZ(v^\alpha_n\otimes 1)$. Since $ v^\alpha_n\otimes 1$ is a cycle  we have that $dw_n^\alpha \in C_*(E_{n-1})$. Because we have assumed that
 $m_{n-1}$ is a quasi-isomorphism, there is a cycle $z^\alpha_{n-1}\in \oplus_{i\leq n-1} V_i \otimes A$ such that $\phi_{n-1}(z^\alpha_{n-1})=dw_ n^\alpha$. First we extend the differential by 
 $d(v_\alpha\otimes 1)=z_\alpha$. We extend $\phi_{n-1}$ to $\phi_n$ by defining
 $\phi_n(v^\alpha_n\otimes 1 )=  w_\alpha$.  The fact that $\phi_n $ is an quasi-isomorphism follows from an inductive argument combined with the Five Lemma, using the fact that on the quotient $\phi_n: V_n\otimes  C_*(G)\rightarrow  C_*(E_n, E_{n-1})$ is a quasi-isomorphism.

Next we prove that $A\simeq A^!$ in the derived category of $A^e$-bimodules, which is a translation of Poincar\'e duality.  Let $Ad_0:A\rightarrow A^e$ be defined by $Ad_0= (id\otimes S) \delta$. For an $A$-bimodule $M$ let $Ad_0^*(M)$ be the $A$-module
whose $A$-module structure is induced using pull-back by $Ad_0$. By applying the result of F\'elix-Halperin-Thomas on describing the chains of the base space of a $G$-fibration $G \rightarrow EG\rightarrow BG\simeq X$,  we get a quasi-isomorphism of coalgebras
$$
C_*(X)\simeq B(\kk,A,\kk).
$$
Note that  $ B(\kk,A,\kk)\simeq B(\kk, A,A)\otimes_A B(A,A, \kk)$.  Poincar\'e duality for  $X$ implies that there is a cycle  $z_1\in C_*(X)$ such that  capping with $z_1$
\begin{equation}\label{poin}
-\cap z_1: C^*(X)\rightarrow C_{*-d}(X)
\end{equation}
is a quasi-isomorphism. The class $z_1$ corresponds to a cycle $z\in  B(\kk, A,A)\otimes_A B(A,A, \kk)$ and the quasi-isomorphism (\ref{poin}) corresponds to the quasi-isomorphism
\begin{equation}
ev_{z,P}: \Hom_k ( B(\kk, A,A) , P) \rightarrow B(A,A, \kk)\otimes P
\end{equation}
given by $ ev_{z}(f)= \sum f(z_i)z'_i $, where $f\in \Hom_k ( B(\kk, A,A) , \kk) $ and $z=\sum z_i\otimes z'_i$.

Let $E=Ad_0^*(A^e)$.  We have the following quasi-isomorphisms of  $A^e$-modules:
\begin{equation}
\begin{split}
A&\simeq B(A,A,A)\simeq B(Ad^*(A^e),A,\kk)\simeq E \otimes_A B(A,A,\kk),
\end{split}
\end{equation}
where $A^e=A\otimes A $ acts on the last term from the left and on the factor $E$. On the other hand
\begin{equation}
\begin{split}
A^!&\simeq \Hom_{A^e}(B(A,A,A),A^e)\simeq \Hom_{A^e} (B(\kk,A,Ad^*(A^e)), A^e)\\ &\simeq \Hom_{A} (B(\kk,A,A),\Hom_{A^e}(Ad_0^*(A^e), A^e) )\\
& \simeq \Hom_{A} (B(\kk,A,A),Ad_0^*(A^e)).
\end{split}
\end{equation}
Therefore $ev_{z,E}$ is a quasi-isomorphism of $A^e$-modules  from $A^!$ to $A[-d]$. 
\end{proof}

\begin{corollary}  For a closed oriented manifold $M$,  $HH^*(C^*(\Omega M), C^*(\Omega M))$  is a BV-algebra.
\end{corollary}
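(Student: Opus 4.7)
The plan is to reduce this corollary directly to the two previous results: the proposition establishing that $C_*(\Omega X)$ is a Calabi--Yau DG algebra for $X$ a Poincar\'e duality finite CW complex, and Theorem \ref{main-theo1} which produces the BV structure on $HH^*(A,A)$ from a Calabi--Yau structure on $A$. (I will assume the statement intends $C_*(\Omega M)$ rather than $C^*(\Omega M)$, since the latter is where the Moore-loop Calabi--Yau property has just been established.)

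First I would verify the input to the previous proposition: a closed oriented manifold $M$ admits a finite CW structure (by a smooth triangulation, or by Morse theory on a compact manifold) and satisfies Poincar\'e duality with $\kk$-coefficients, so it qualifies as a Poincar\'e duality finite CW complex in the sense used in the proof above. Replacing $M$ by a homotopy equivalent CW complex does not change the homotopy type of $\Omega M$, so the DG algebra $A = C_*(\Omega M)$ (with its Pontryagin product, coming from the strictly associative Moore concatenation via the Eilenberg--Zilber map) is well-defined up to quasi-isomorphism of DG algebras, and the quasi-isomorphism class is what the Calabi--Yau property depends on.

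Then I would invoke the preceding proposition to conclude that $A = C_*(\Omega M)$ is a Calabi--Yau DG algebra of dimension $d = \dim M$: it is homologically smooth (via the finitely generated semifree $A^e$-module resolution of $A$ built from the cellular skeleta of $M$ and the path--loop fibration $\Omega M \to PM \to M$), and one has an equivalence $A \simeq A^![d]$ of $A$-bimodules arising from the Poincar\'e duality quasi-isomorphism $-\cap z_1 : C^*(M) \to C_{*-d}(M)$ transferred through the identification $C_*(M) \simeq B(\kk,A,\kk)$. With a choice of volume $\pi \in A^!$ fixed, Theorem \ref{main-theo1} applies verbatim and yields the BV-algebra structure $(HH^*(A,A),\cup,\Delta)$ with $\Delta = D_\pi \circ B \circ D_\pi^{-1}$.

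No step here requires new machinery; the only point that deserves a moment of care is confirming that the hypotheses of the preceding proposition (finite CW, Poincar\'e duality, existence of a strictly associative model for $\Omega M$ via a simplicial topological group) are all satisfied by a closed oriented manifold, which they are. The main obstacle, if one wished to be fully rigorous, would be spelling out the comparison between the cubic singular chains model used in the construction of $C_*(\Omega X)$ as a DG bialgebra and whatever model one ultimately wishes to work with for $\Omega M$; but this is routine since all such models are quasi-isomorphic as DG algebras, and the Calabi--Yau property and hence the BV structure on $HH^*$ is invariant under quasi-isomorphism.
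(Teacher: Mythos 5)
Your proposal is correct and follows essentially the same route as the paper: reduce to the preceding proposition (Calabi--Yau structure on $C_*(\Omega M)$, with the implicit correction of $C^*$ to $C_*$) and then apply Theorem \ref{main-theo1}. The only difference is that the paper's one-line proof explicitly notes that Theorem \ref{main-theo1} uses only the derived equivalence $A\simeq A^!$ and not the condition $\psi^!=\psi[d]$ --- a useful remark, since the proof of the proposition really only verifies the former --- whereas you invoke the full Calabi--Yau statement verbatim.
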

\begin{proof}
Note that in the proof of Theorem~\ref{main-theo1} we don't use the second part of the Calabi-Yau condition. We only use the derived equivalence $A\simeq A^![d]$.
\end{proof}

\begin{remark}
Recently Malm~\cite{Malm} has proved that the Burghelea-Fiedorowicz-Goodwillie (BFG) isomorphism  (\cite{BurFied,Good})
$$HH^*( C_*(\Omega M),C_*(\Omega M) )\simeq HH_*(C_*(\Omega M),C_*(\Omega M))\overset{\text{{\tiny BFG}}}{\simeq} H_*(LM)$$ 
is an isomorphism of  BV-algebras, where $H_*(LM)$ is equipped with  the Chas-Sullivan BV-structure~\cite{CS1}. 
\end{remark}

\section{Derived Poincar\'e duality algebras} \label{sec:Poincare}
In this section we show how an isomorphism
\begin{equation*}HH^*(A,A)\simeq HH^*(A,A^\vee)\end{equation*}  of $HH^*(A,A)$-modules gives rise to a BV-structure on $HH^*(A,A)$ whose underlying Gerstenhaber structure is the canonical one. The next lemma follows essentially from Lemma~\ref{lem1}.

\begin{lemma}\label{lem2}  For $a, b\in HH^*(A,A)$ and $\phi\in HH^*(A,A^{\vee})$ we have
\begin{equation}\label{lem2-eq}
\begin{split}
[f,g]\cdot \phi= & \, (-1)^{|f|} B^{\vee}((f\cup g)\phi)-f \cdot B^{\vee}(g \cdot \phi)\\
& + (-1)^{(|f|+1)(|g|+1)}g\cdot B^\vee(f \cdot \phi) +(-1)^{|g|} (f\cup g) \cdot B^{\vee}(\phi).
\end{split}
\end{equation}
\end{lemma}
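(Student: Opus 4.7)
The plan is to mimic the proof of Lemma \ref{lem1} verbatim, but with the Connes operator $B$ replaced by its dual $B^\vee$ acting on $C^*(A, A^\vee)$, and with the action of $C^*(A,A)$ on $C^*(A, A^\vee)$ by contraction (given by the formula $i_f(\phi) = (-1)^{|f||\phi|}\phi \circ i_f$) playing the role of the action on $C_*(A, A)$.

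The starting point is the observation that the proof of Lemma \ref{lem1} actually established an operator identity
\begin{equation*}
i_{[f,g]} = (-1)^{|f|} B\, i_{f\cup g} - i_f B\, i_g + (-1)^{(|f|+1)(|g|+1)} i_g B\, i_f + (-1)^{|g|} i_{f\cup g} B,
\end{equation*}
derived purely from the calculus relations $L_f = [i_f, B]$, $i_{[f,g]} = [L_f, i_g]$, and $i_{f\cup g} = i_f \circ i_g$. So the first step is to check that these very same calculus relations continue to hold when $i_f$ denotes the contraction operator on $C^*(A,A^\vee)$ (formula (\ref{contr-2})) and $B$ is replaced by $B^\vee$, with $L_f$ redefined as $[B^\vee, i_f]$. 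This reduces to a direct verification using the defining formulas $i_f(\phi) = (-1)^{|f||\phi|}\phi\circ i_f$ and $B^\vee(\phi) = (-1)^{|\phi|}\phi\circ B$: each calculus identity on the chain side is transported to the cochain side by post-composition with $\phi$, with signs tracked so that reversing the order of operators under $\phi\mapsto \phi\circ -$ matches the signs appearing on the cochain side.

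Once this operator identity on cochains is in hand, applying it to an arbitrary $\phi\in HH^*(A,A^\vee)$ and reading off the terms gives exactly the right-hand side of the asserted identity. The only bookkeeping is to match $i_f B^\vee i_g \phi$ with $f\cdot B^\vee(g\cdot \phi)$, $B^\vee i_{f\cup g}\phi$ with $B^\vee((f\cup g)\phi)$, etc., and verify that the signs produced by the derivation agree with those in the statement.

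The main obstacle, and really the only substantive piece of work, is the sign-tracking in checking that the calculus identities $L_f = [i_f, B^\vee]$, $i_{[f,g]} = [L_f, i_g]$, and $i_{f\cup g} = i_f \circ i_g$ pass through the duality $\phi \mapsto \phi\circ(-)$ without picking up extra signs beyond those already present in the chain version. Once this is done the identity is purely formal: it is the same algebraic manipulation as in the proof of Lemma \ref{lem1}, applied to an element of a different (but structurally identical) calculus module over $HH^*(A,A)$.
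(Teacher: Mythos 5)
Your proposal is correct and is essentially the paper's own argument: the paper likewise reduces the identity to Lemma \ref{lem1} by evaluating both sides on a chain $x=a_0[a_1,\dots,a_n]$ and pushing $\phi$ through via the duality formulas $i_f(\phi)=(-1)^{|f||\phi|}\phi\circ i_f$ and $B^\vee(\phi)=(-1)^{|\phi|}\phi\circ B$, the whole content being the sign bookkeeping (including the $f\leftrightarrow g$ swap forced by the order reversal under $\phi\mapsto\phi\circ(-)$) that you correctly identify as the only substantive step. The paper dualizes the concluded operator identity of Lemma \ref{lem1} directly rather than first dualizing the three calculus relations, but this is only a difference in organization, not in method.
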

\begin{proof}
To prove the identity, one evaluates the cochains in 
$$
C^* (A,A^\vee)=\Hom_{\kk}(T(s\bar{A}),A^\vee)
\simeq  \Hom_{\kk}(A\otimes T(s\bar{A}),\kk)
$$ 
on both sides on a chain  $ x=a_0[a_1,\cdots, a_n] \in A\otimes T(s\bar{A})$.  By (\ref{contr-2}) and Lemma~\ref{lem1} we have:
\begin{equation*}
 \begin{split}
 ([f,g]\cdot \phi)(x)& =(i_ {[f,g]}\phi)(x)=(-1)^{|[f,g]|\cdot |\phi|}\phi(i_{[f,g]}(x))\\
&=(-1)^{|[f,g]|\cdot |\phi|} \phi((-1)^{|f|}B((f\cup g)\cdot x)-f\cdot  B(g \cdot x) \\
& \, \, \, + (-1)^{(|f|+1)(|g|+1)}g \cdot B(f \cdot x)+(-1)^{|g|} (f\cup g) \cdot B(x))\\
&=(-1)^{|[f,g]| \cdot |\phi|+|f|+ |\phi|}B^\vee(\phi)(i_{f\cup g}x)-(-1)^{|[f,g]| \cdot |\phi|} \phi(i_f B(i_gx))\\
&\, \, \, + (-1)^{|[f,g]| \cdot |\phi|+(|f|+1)(|g|+1)}  \phi(i_g B(i_fx)) \\
& \, \, \, + (-1)^{|[f,g]|.|\phi|+|g|} \phi(i_{f\cup g}B(x))\\
&=(-1)^{|[f,g]| \cdot |\phi|+|f|+|\phi| +(|\phi|+1)|f\cup g|}i_{f\cup g}(B^\vee(\phi))(x)\\
&\, \, \, -(-1)^{|[f,g]||\phi|+|g|(|\phi|+|f|+1)+|f|+|\phi|+|f|\cdot |\phi|} i_g( B^\vee(i_f\phi))(x)\\
&\!Ê\! \!  +(-1)^{|[f,g]|\cdot |\phi|+(|f|+1)(|g|+1)+|f|(|\phi|+|g|+1)+|g|+|\phi|+ |g|\cdot |\phi| }  i_f( B^\vee(i_g\phi))(x)\\
&\, \, \, +(-1)^{|[f,g]|\cdot |\phi|+|g|+(|\phi|+1)|f\cup g|+|\phi|} i_{f\cup g} (B^\vee \phi)(x)\\
&=(-1)^{|g|}i_{f\cup g}(B^\vee(\phi))(x)+(-1)^{(|f|+1)(|g|+1)}i_gB^\vee(i_f \phi)(x)\\
&\, \, \,  -i_fB^\vee(i_g \phi)(x) +(-1)^{|f|} B^{\vee}((f\cup g)\cdot \phi)(x).
 \end{split}
\end{equation*}
This proves the statement.
\end{proof}

Now let us suppose that we have an equivalence $A \simeq A^\vee[d]$ in the derived category of $A$-bimodules.
This property provides us with an isomorphism $D: HH^*(A,A^\vee)\rightarrow HH^{*+d}(A,A)$ which allows us to transfer the Connes operator on $HH^*(A,A^\vee)$ to $HH^*(A,A)$,

$$
\Delta := D \circ B^\vee \circ D^{-1}.
$$

\begin{lemma} Let $A$ be a DG algebra with an equivalence $A\simeq {A^\vee}[d]$ in the derived category of $A$-bimodules. Then the induced isomorphism 
$$
D: HH^*(A,A^\vee)\rightarrow HH^{*+d}(A,A)
$$ 
is an isomorphism of $HH^*(A,A)$-modules, i.e. for all $f\in HH^*(A,A)$ and $\phi\in HH^*(A,A^\vee)$ we have
\begin{equation}
D(i_f(\phi))= f\cup D(\phi).
\end{equation}
\end{lemma}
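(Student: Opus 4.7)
My plan is to imitate the strategy of Lemma \ref{lem0}: I would pass to a derived model in which cup product, contraction, and $D$ all become special instances of composition of maps between $A^e$-complexes, and then exploit the trivial associativity of composition.

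Concretely, the first step is to pick a DG-projective resolution $\epsilon: P \twoheadrightarrow A$ of $A$ as an $A^e$-module; the two-sided bar construction $B(A,A,A)$ is a convenient choice. By Proposition \ref{prop-ess} we then obtain quasi-isomorphisms $C^*(A,A) \simeq \End_{A^e}(P)$ (via post-composition with $\epsilon$) and $C^*(A,A^\vee) \simeq \Hom_{A^e}(P, A^\vee)$. Under these identifications the cup product is realised as composition of $A^e$-linear endomorphisms of $P$, while the contraction $i_f\phi$ is realised, up to Koszul signs, as pre-composition $\phi \circ f$. The next step is to use the hypothesis: the derived equivalence $\psi: A \simeq A^\vee[d]$ lifts along $\epsilon$ to a quasi-isomorphism $\widetilde\psi: P \to A^\vee[d]$, and Lemma \ref{w-c-f} provides a chain-level homotopy inverse $\widetilde\psi^{-1}$. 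Post-composition with $\widetilde\psi^{-1}$ then realises $D$ as a quasi-isomorphism of complexes $D: \Hom_{A^e}(P, A^\vee) \xrightarrow{\sim} \End_{A^e}(P)[-d]$.

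The identity $D(i_f\phi) = f \cup D(\phi)$ would then follow from the strictly commutative square
\[
\xymatrix{
\End_{A^e}(P)\otimes_\kk \Hom_{A^e}(P, A^\vee) \ar[r]^-{\mathrm{ev}} \ar[d]_-{\id\otimes D} & \Hom_{A^e}(P, A^\vee) \ar[d]^-{D} \\
\End_{A^e}(P)\otimes_\kk \End_{A^e}(P) \ar[r]_-{\circ} & \End_{A^e}(P)
}
\]
where $\mathrm{ev}(f\otimes\phi) := \phi \circ f$ models the contraction and the bottom arrow is composition (modelling the cup product). The square commutes on the nose, because post-composition with the fixed map $\widetilde\psi^{-1}$ always commutes with pre-composition by any $A^e$-linear endomorphism of $P$. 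Passing to cohomology yields the claim.

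The main technical obstacle will be bookkeeping the Koszul signs and the degree shift by $d$, so that the formulas (\ref{contr-1t})--(\ref{contr-2}) of Section \ref{section-CH} match the abstract pre-/post-composition model used above. In particular, one must verify that the identification $\End_{A^e}(P) \simeq \Hom_{A^e}(P,A)$ sends composition to the explicit cup product on the bar complex. None of this requires new input beyond Proposition \ref{prop-ess} and Lemma \ref{w-c-f}, but the sign verifications are finicky and should be carried out explicitly.
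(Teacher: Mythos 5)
Your proposal is correct and follows essentially the same route as the paper, which proves this lemma by citing Lemma \ref{lem0} verbatim and adapting the commutative diagram (\ref{diagcom1}) to $\Hom_{A^e}(P,A^\vee)$; your observation that post-composition with a fixed chain-level homotopy inverse of $\widetilde\psi$ strictly commutes with pre-composition by $f$ is exactly the content of that adapted diagram. The only cosmetic difference is that the paper works with the forward map $\phi\otimes\id$ and lets $D$ appear on cohomology, while you invert at the chain level via Lemma \ref{w-c-f}; both reduce the identity to associativity of composition, modulo the Koszul-sign bookkeeping you rightly flag.
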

\begin{proof}
The proof is identical to the proof of Lemma~\ref{lem0}. One uses a
resolution by semi-free modules in the category of $A$-bimodules and adapts diagram (\ref{diagcom1}) to the case of $C^*(A,A^\vee)$, the dual theory of $C_*(A,A)$.
\end{proof}

\begin{corollary}\label{coropda} Let $A$ be a DG algebra with an equivalence $A\simeq {A^\vee}[d]$ in the derived category of $A$-bimodules. Then for $f,g\in HH^*(A,A)$ and $\phi \in HH^*(A,A^\vee)$  we have
\begin{equation}
\begin{split}
[f,g]\cup D(\phi)&= (-1)^{|f|} \Delta((f\cup g)\cdot D\phi)-f\cdot  \Delta (g\cdot D\phi)\\
& \, \, \, + (-1)^{(|f|+1)(|g|+1)}g\cdot   \Delta(f\cdot D\phi) +(-1)^{|g|} (f\cup g)\cdot  DB^\vee(\phi),
\end{split}
\end{equation}
where $D:HH^*(A,A^\vee)\rightarrow HH^*(A,A)$ is the isomorphism induced by the derived equivalence.

\end{corollary}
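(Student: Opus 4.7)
The plan is to derive the corollary by simply applying the isomorphism $D\colon HH^*(A,A^\vee)\xrightarrow{\simeq} HH^{*+d}(A,A)$ to the module-level identity already established in Lemma \ref{lem2}. Concretely, Lemma \ref{lem2} gives the four-term expression for $[f,g]\cdot\phi$ in $HH^*(A,A^\vee)$, and the right-hand side of the corollary is nothing but the image of that expression under $D$. So the whole argument will be a single termwise computation on a known equality.

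The two inputs that make this work are (a) the preceding lemma, which asserts that $D$ is a morphism of $HH^*(A,A)$-modules, i.e.\ $D(f\cdot\psi)=f\cup D(\psi)$ for all $f\in HH^*(A,A)$ and $\psi\in HH^*(A,A^\vee)$, and (b) the very definition $\Delta=D\circ B^\vee\circ D^{-1}$, which rewrites as $D\circ B^\vee=\Delta\circ D$. Starting from
\begin{equation*}
[f,g]\cdot\phi=(-1)^{|f|}B^\vee((f\cup g)\cdot\phi)-f\cdot B^\vee(g\cdot\phi)+(-1)^{(|f|+1)(|g|+1)}g\cdot B^\vee(f\cdot\phi)+(-1)^{|g|}(f\cup g)\cdot B^\vee(\phi),
\end{equation*}
I would apply $D$ to both sides. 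The left-hand side becomes $D([f,g]\cdot\phi)=[f,g]\cup D(\phi)$ by (a). For each of the first three terms on the right, I would first pull the outer $HH^*(A,A)$-factor through $D$ using (a) (turning $\cdot$ into $\cup$), then commute $D$ past $B^\vee$ using (b) to produce $\Delta$, and then once more use (a) to convert the remaining inner $\cdot$ into $\cup$ inside the argument of $\Delta$. The last term simply pulls $(f\cup g)$ outside of $D$ by (a), leaving $DB^\vee(\phi)$ untouched as in the statement.

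Step by step, these four terms give
\begin{equation*}
(-1)^{|f|}\Delta((f\cup g)\cup D\phi),\quad -f\cup\Delta(g\cup D\phi),\quad (-1)^{(|f|+1)(|g|+1)}g\cup\Delta(f\cup D\phi),\quad (-1)^{|g|}(f\cup g)\cup DB^\vee(\phi),
\end{equation*}
which matches the corollary (with $\cdot$ read as $\cup$ whenever both factors lie in $HH^*(A,A)$). There is no genuine obstacle here once Lemma \ref{lem2} and the $HH^*(A,A)$-linearity of $D$ are in hand; the only thing to watch is bookkeeping of the signs, which are transported verbatim from Lemma \ref{lem2} because $D$ is a degree $+d$ map that commutes strictly with cup-product by elements of $HH^*(A,A)$. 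This makes the proof a direct analogue of Corollary \ref{cor1}, with the roles of the contraction pairing $HH^*(A,A)\otimes HH_*(A,A)\to HH_*(A,A)$ and of $B$ replaced respectively by the action $HH^*(A,A)\otimes HH^*(A,A^\vee)\to HH^*(A,A^\vee)$ and by $B^\vee$.
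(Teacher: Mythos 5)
Your proposal is correct and is exactly the paper's argument: the paper proves Corollary \ref{coropda} by applying $D$ to the identity of Lemma \ref{lem2} and invoking the $HH^*(A,A)$-linearity of $D$ together with $D\circ B^\vee=\Delta\circ D$, in direct analogy with the proof of Corollary \ref{cor1}. Nothing essential differs between your route and the paper's.
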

\begin{proof}
This is a  consequence of  Lemma~\ref{lem2}, the proof being similar to that of Corollary~\ref{cor1}.
\end{proof}

\begin{definition}\label{PDPDA}
Let $A$ be a differential graded algebra such that $A$ is equivalent to $A^\vee[d]$ in the derived category of $A$-bimodules.
This means that there is a quasi-isomorphism of $A$-bimodules $\psi: P\rightarrow A^\vee[d]$ where $P$ is a semi-free resolution of $A$.
Then $\psi$ is a cocycle in $\Hom_{A^{e}}(P,A^\vee)$ for which $-\cap [\psi]: HH^*(A,A)\rightarrow HH^*(A,A^\vee)$ is an isomorphism. Under this assumption, $A$ is said to be a \emph{derived Poincar\'e duality algebra} (DPD for short) of dimension $d \in \Z$ if $B^\vee([\psi])=0$.

\end{definition}

\begin{remark} Given a cocycle $\psi \in C^d(A,A^\vee)$, it is rather easy to check when $-\cap [\psi]: HH^*(A,A)\rightarrow HH^{*+d}(A,A^\vee)$ is an isomorphism\footnote{Intuitively, one should think of $HH^*(A,A)$ as the homology of the free loop space of some space, which includes a copy of the homology of the underlying space by the inclusion of constant loops. This condition means that one has to check that the restriction of the cap product to the constants loops corresponds to the Poincar\'e duality of the underlying manifold.}: one only has to check that $-\cap [\phi]: H^*(A)\hookrightarrow HH^*(A,A)\overset{\cap [\phi]}{\hookrightarrow} H^*(A^\vee) $ is an isomorphism (see~\cite{Me}, Proposition 11).
\end{remark}

Two immediate consequences of the previous lemma are the following theorems.

\begin{theorem}
 For a DPD algebra $A$,  $(HH^*(A,A), \cup, \Delta)$ is a BV-algebra, i.e.
\begin{equation}\label{BV-cond}
[f,g]=(-1)^{|f|}\Delta(f\cup g)-(-1)^{|f|}\Delta(f)\cup g-f\cup \Delta(g).
\end{equation}
\end{theorem}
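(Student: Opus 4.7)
The plan is to deduce the BV identity directly from Corollary \ref{coropda} by feeding into it the Poincar\'e class $\phi=[\psi]$, and exploiting the two defining features of a DPD algebra: the equivalence $A\simeq A^\vee[d]$ and the vanishing of $B^\vee[\psi]$.

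First I would identify the distinguished element. Since $D^{-1}:HH^*(A,A)\to HH^{*+d}(A,A^\vee)$ is cap product with $[\psi]$, and the unit $1\in HH^0(A,A)$ acts trivially by $i_1=\id$, we have $D^{-1}(1)=[\psi]$, i.e. $D([\psi])=1$. Then I would specialise the identity in Corollary \ref{coropda} to $\phi=[\psi]$. The left-hand side becomes
\begin{equation*}
[f,g]\cup D([\psi])=[f,g]\cup 1 = [f,g].
\end{equation*}
On the right-hand side, the last summand contains the factor $DB^\vee([\psi])$, which vanishes by the DPD hypothesis $B^\vee[\psi]=0$ (Definition \ref{PDPDA}). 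The three remaining terms read
\begin{equation*}
(-1)^{|f|}\Delta(f\cup g)\;-\;f\cup\Delta(g)\;+\;(-1)^{(|f|+1)(|g|+1)}\,g\cup \Delta(f).
\end{equation*}

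The final step is a sign bookkeeping: since $\cup$ is graded commutative on $HH^*(A,A)$ (Theorem \ref{Gersten}) and $\Delta$ raises degree by $1$, one has
\begin{equation*}
g\cup \Delta(f)=(-1)^{|g|(|f|+1)}\Delta(f)\cup g.
\end{equation*}
Combining the exponents,
\begin{equation*}
(|f|+1)(|g|+1)+|g|(|f|+1)=(|f|+1)(2|g|+1)\equiv |f|+1 \pmod{2},
\end{equation*}
so that $(-1)^{(|f|+1)(|g|+1)}g\cup\Delta(f)=-(-1)^{|f|}\Delta(f)\cup g$. Substituting this back yields
\begin{equation*}
[f,g]=(-1)^{|f|}\Delta(f\cup g)-(-1)^{|f|}\Delta(f)\cup g-f\cup\Delta(g),
\end{equation*}
which is exactly the sought BV relation. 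Together with the relation $\Delta^2=0$, which is immediate from $(B^\vee)^2=0$ transported across the isomorphism $D$, this promotes the Gerstenhaber algebra $(HH^*(A,A),\cup,[-,-])$ to a BV-algebra.

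The only real obstacle is the sign juggling in the last step, and the verification that $D([\psi])=1$ with the convention chosen for $D$; all the essential homological content has already been packaged in Lemma \ref{lem2} and Corollary \ref{coropda}. Once these are in hand the proof is a one-line substitution followed by a cosmetic rearrangement, so no further heavy lifting is required.
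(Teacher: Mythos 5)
Your proposal is correct and follows essentially the same route as the paper: identify $D([\psi])=1$, substitute $\phi=[\psi]$ into Corollary \ref{coropda}, kill the last term using $B^\vee([\psi])=0$, and rearrange signs using graded commutativity of $\cup$. The paper verifies $D([\psi])=1$ by tracking the class of $\id\in\Hom_{A^e}(P,P)$ under $\epsilon_*$ and $\psi_*$ rather than via $i_1=\id$, and leaves the final sign bookkeeping implicit, but these are cosmetic differences.
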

\begin{proof}
Suppose that the derived equivalence $A^\vee[d] \simeq A$ is realized by a quasi-isomorphism $\psi: P\rightarrow A^\vee$,
where $\epsilon: P\rightarrow A$ is a semi-free resolution of $A$.
\begin{equation}
\xymatrix{P\ar[r]^\psi & A^\vee \\ P\ar[r]^{id} & P\ar[u]^\psi  \ar[d]^\epsilon\\ P\ar[r]^\epsilon & A}
\end{equation}
One can then use $\Hom_{A^{e}}(P,A^\vee)$ to compute $HH^*(A,A^\vee)$, and similarly use $\Hom_{A^{e}}(P,A)$ or $\Hom_{A^{e}}(P,P)$ to compute the cohomology $HH^*(A,A)$. Let $ D= (-\cap [\psi])^{-1}: HH^*(A,A^\vee)\rightarrow HH^*(A,A)$
be the isomorphism induced by the derived equivalence. 

Then the cohomology class represented by $id\in \Hom_{A^{e}}(P,P)$ corresponds to $1\in HH^*(A,A)$  using $\epsilon_* : \Hom_{A^{e}}(P,P) \rightarrow \Hom_{A^{e}}(P,A) $,  and to $\psi$ by the map $$\psi_*:\Hom_{A^{e}}(P,P)  \rightarrow \Hom_{A^{e}}(P,A^\vee).$$
Therefore, $D([\psi])= 1\in HH^*(A,A)$ where $D=\epsilon_*\circ \psi_*^{-1}$.  Now take $\phi=[\psi]$ in the statement of Corollary~\ref{coropda}.
\end{proof}

A similar theorem can be proved under a slightly different assumption.

\begin{theorem}(Menichi~\cite{Me}) Let $A$ be a differential graded algebra equipped with a quasi-isomorphism $m: A\rightarrow A^\vee [d]$. Then $HH^*(A,A)$ has a BV-algebra structure extending its natural Gerstenhaber algebra structure. The BV-operator is $\Delta= DB^\vee D^{-1} $ where
$D: HH^*(A,A^\vee)\rightarrow HH^*(A,A)$ is the isomorphism induced by $m$.
\end{theorem}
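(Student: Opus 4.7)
My plan is to reduce the theorem to Corollary~\ref{coropda} by exhibiting a cochain-level representative of $D^{-1}(1)$ on which $B^\vee$ vanishes identically. First, I would observe that the $A$-bimodule quasi-isomorphism $m:A\to A^\vee[d]$ determines a Hochschild cohomology class $[\psi]\in HH^d(A,A^\vee)$, where $\psi\in C^0(A,A^\vee)\cong A^\vee$ is the bar-length-zero cocycle defined by $\psi=m(1)$. Because $m$ is a quasi-isomorphism, cap product with $[\psi]$ induces an isomorphism $HH^*(A,A)\overset{\simeq}{\longrightarrow}HH^{*+d}(A,A^\vee)$ whose inverse $D$ satisfies $D([\psi])=1\in HH^0(A,A)$. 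The operator $\Delta=DB^\vee D^{-1}$ is then well defined on $HH^*(A,A)$, and $\Delta^2=0$ follows immediately from $(B^\vee)^2=0$.

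The next step is to verify that $D$ is a map of $HH^*(A,A)$-modules, i.e.\ $D(i_f\phi)=f\cup D(\phi)$ for $f\in HH^*(A,A)$ and $\phi\in HH^*(A,A^\vee)$. This is proved exactly as in Lemma~\ref{lem0}: pick a semi-free resolution $P$ of $A$ as an $A$-bimodule, identify cap and cup products by the commutative diagram~\eqref{diagcom1} transferred to the dual theory $C^*(A,A^\vee)$, and pass to cohomology. With this module-map property in hand, Corollary~\ref{coropda} applies verbatim, and substituting $\phi=[\psi]$ (so that $D\phi=1$) yields
\begin{equation*}
[f,g]=(-1)^{|f|}\Delta(f\cup g)-f\cup\Delta(g)+(-1)^{(|f|+1)(|g|+1)}g\cup\Delta(f)+(-1)^{|g|}(f\cup g)\cup DB^\vee([\psi]).
\end{equation*}

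The decisive observation — the one that makes Menichi's hypothesis sufficient without any analogue of the DPD condition $B^\vee([\psi])=0$ — is that the last summand vanishes on the nose. Indeed, $\psi$ is concentrated in bar length zero, whereas the Connes operator $B$ strictly raises bar length; hence $\psi\circ B=0$ as cochains, so $B^\vee(\psi)=0$ and in particular $DB^\vee([\psi])=0$. Applying graded commutativity of $\cup$ (using $|\Delta f|=|f|+1$) to rewrite the third summand as $-(-1)^{|f|}\Delta(f)\cup g$ then collapses the displayed identity into the BV relation
\begin{equation*}
[f,g]=(-1)^{|f|}\Delta(f\cup g)-(-1)^{|f|}\Delta(f)\cup g-f\cup\Delta(g).
\end{equation*}
Since the bracket on the left is the standard Gerstenhaber bracket of Theorem~\ref{Gersten}, this BV structure extends the canonical Gerstenhaber structure on $HH^*(A,A)$. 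I foresee no real obstacle in carrying this out: all the substantive computation is already encoded in Corollary~\ref{coropda}, and the genuinely new input is only the bookkeeping remark that a bimodule map $A\to A^\vee[d]$ yields a representative of bar degree zero, for which $B^\vee$ vanishes trivially at the cochain level.
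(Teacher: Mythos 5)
Your proposal is correct and follows essentially the same route as the paper: both identify the class $[\psi]$ coming from $m$ with a cocycle concentrated in bar length zero (the paper writes it as $\psi=m\circ\epsilon$ on the bar resolution, which vanishes on $A\otimes T(s\bar A)^+\otimes A$), invoke the module-map property of $D$ and Corollary~\ref{coropda} with $\phi=[\psi]$, and conclude by noting that the image of the Connes operator lies in positive bar length, so $B^\vee(\psi)=0$ already at the cochain level. The only additions you make are the explicit sign bookkeeping in the final identity and the remark that $\Delta^2=0$ follows from $(B^\vee)^2=0$, both of which the paper leaves implicit.
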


\begin{proof}
The proof is very similar to that of the previous theorem. For simplicity we take the two-sided bar resolution $ \epsilon: B(A,A,A) \rightarrow A $, where $ \epsilon: A\otimes \kk \otimes A\subset B(A,A,A) \rightarrow A$ is given by the multiplication of $A$. Then $\psi= m\circ \epsilon: B(A,A,A)\rightarrow A^\vee$ is a quasi-isomorphism.
Let $[\psi] \in HH^*(A,A^\vee)$ be the class represented by $\psi$, and $D=(-\cap  [\psi ])^{-1}: HH^*(A,A^\vee)\rightarrow HH^*(A,A)$ be the inverse of the isomorphism induced by $\psi$. Similarly to the proof of the previous theorem, we have $D([\psi])=1\in HH^*(A,A)$. It only remains to prove that $DB^\vee([\psi])=0$. For that we compute $B^\vee([\psi])=[B^\vee (\psi)]$. Note that we have a $\kk$-module isomorphism $C^*(A,A^\vee)=\Hom_{A^{e}}(B(A,A,A), A^\vee)\simeq \Hom(A\otimes T(s\bar{A}), \kk)= (A\otimes T(s\bar{A}))^\vee$. The image of the Connes operator $B:A\otimes T(s\bar{A})\rightarrow A\otimes T(s\bar{A})$ is included in $A\otimes T(s\bar{A})^+$. Since $\epsilon_{|A \otimes T(s\bar{A})^+\otimes A}=0$, we have $m\circ B=\psi\circ\epsilon \circ B=0$.
\end{proof}

\section{Symmetric open Frobenius algebras and (co)BV-structure of Hochschild homology}
\label{sec:Frobenius}

In order to get the correct definitions  we need to fix a few conventions.  

Let $A$ be a differential graded $\kk$-algebra. The $\kk$-dual module $A^\vee=\hom_{\kk}(A,\kk)$ is negatively graded, i.e.
$A^\vee_{-i}= \Hom_{\kk}(A_i,\kk)$ is equipped with the differential $d^\vee$  defined by $d_{A^\vee}(\alpha)(x)=-(-1)^{|x|}\alpha(d_A(x))=(-1)^{|\alpha|}\alpha(d_A(x))$, $\alpha \in A^\vee$, which is also of degree one.  Our choice of sign makes the evaluation map $ev: A\otimes A^\vee \to \kk$ a chain map of degree zero. 
We apply the same rule for a general $A$-bimodule $M$. That is  $M^\vee=\hom_{\kk}(M,\kk)$ is equipped with the differential $d_{M^\vee}\phi=(-1)^{|\phi|}\phi \circ d_M$.

There are natural left and right $A$-module structures on $A^\vee$ given by $x\cdot\alpha : y\mapsto \alpha(yx)$ and $\alpha\cdot x:y\mapsto (-1)^{|x|}\alpha (xy)$. The maps $(x,\alpha)\mapsto x\cdot \alpha$ and  $(\alpha,x)\mapsto \alpha \cdot x$  are chain maps. Similarly for a general $A$-bimodule $M$,   $M^\vee$ is equipped with the $A$-bimodule structure $(x\cdot \alpha)(y):=\alpha(yx)$ and  $(\alpha\cdot x)(y):=(-1)^{|x|}\alpha (xy)$, where $\alpha \in M^\vee$.

For each pair consisting of a right $A$-module $M$ and of a left $A$-module $ N$, note that $M\otimes N$ is an $A$-bimodule, and so is $(M\otimes N)^\vee$.
There is a natural inclusion of $A$-bimodules $i_{N,M}:N^\vee\otimes M^\vee \hookrightarrow (M\otimes N)^\vee  $ given by $\phi_1\otimes \phi_2\mapsto (-1)^{|\phi_1||\phi_2|}\phi_2\otimes \phi_1$. 

We use the following sign rule for  the tensor product of $f \in\Hom_{\kk}(A^{\otimes p} ,M)$ and   $g \in \Hom_{\kk}(A^{\otimes q} ,N)$:
\begin{equation} \label{sign1}
(f\otimes g)(a_1\otimes \cdots \otimes a_{p+q})= (-1)^{|g|(|a_1|+\cdots +|a_p|)}f(a_1\otimes \cdots \otimes a_p)\otimes  g(a_{p+1}\otimes\cdots \otimes a_{p+q}).
\end{equation}

The shift of the degree by $m$  is denoted by $s_{m}:A\to A$ and $\deg (s_{m}(a))= \deg (a) + m$, or in other words $A[m]_k=A_{k-m}$.

Using the shift operation one can  pullback other operations, for instance a product $\mu: A\otimes A\to A$  is pulled back to 
\begin{equation} \label{sign3}
\mu_{m} :=s_{m}^*(\mu)= s_{m} \circ \mu \circ (s_{m}^{-1} \otimes s_m^{-1}),
\end{equation}
or more explicitly $\mu_m(s_{m}(a),s_{m}(b))=(-1)^{m|a|} s_{m}\mu(a,b)$.

So if $\mu$ is of degree  $m$ then $\mu_m$ is of degree zero and  $\mu_m$ is associative (of degree zero) if $\mu$ is associative (of degree $m$), i.e.
$$
\mu(\mu(a,b)c)=(-1)^{m|a|}\mu(a, \mu(b,c)).
$$

%Similarly, the coassociativity rule for a coproduct $\delta$ of degree $m$ is obtained by writing down the usual coassociativity rule for the degree zero coproduct $\delta'= (s_{-m}\otimes s_{-m} )\delta s^{-1}_{-m}$, which translates to 
%$$
%(\delta\otimes 1)\delta=(-1)^m(1\otimes \delta)\delta.
%$$
Similarly, the coassociativity  and cocommutativity conditions for a coproduct $\delta$ of degree $m$ are obtained by writing down the usual coassociativity and cocommutativity conditions for the degree zero coproduct $\delta'= (s_{-m}\otimes s_{-m} )\delta s^{-1}_{-m}$. These translate into 
$$
(\delta\otimes \id)\delta=(-1)^m(\id\otimes \delta)\delta
$$
and
$$
\tau \circ \delta=(-1)^m\delta,
$$
where $\tau:A\otimes A\to A\otimes A$ is  given by  $\tau(x\otimes y)=(-1)^{|x||y|}y\otimes x$.

Using the same argument the equations defining a degree $m$  (right and left) counit  for $\delta$ are  $\id= M (\eta \otimes \id)\delta'= M  (\id \otimes \eta)\delta'$, where $M$ stands for either of the natural isomorphisms
$ A\otimes _\kk\kk \simeq A$ and $ \kk \otimes _\kk A \simeq A$. 
This explicitly amounts to requiring that the identities 
$$
x=\sum_{(x)}  (-1)^{m|x'|}\eta(x')x''=\sum_{(x)}  (-1)^{m|x'|}\eta(x'')x'
$$  
be satisfied for any $x$. Here we use a simplified form of Sweedler's notation for the coproduct, in which we write $ \delta x=\sum_{(x)} x'\otimes x''$ instead of $ \delta x=\sum_{i} x'_i\otimes x''_i$ and interpret $(x)$ as the indexing set for the variable $i$.

\begin{definition}(DG open Frobenius algebra)\label{defopfrob}.  A \textit{differential graded open Frobenius $\kk$-algebra} of degree $m$ is a triple $(A,\cdot,\delta)$  such that:
\begin{enumerate}
\item $(A,\cdot)$ is a unital differential graded associative algebra whose product has degree zero,
\item $(A, \delta)$ is a differential graded coassociative coalgebra  of degree $m$. That $\delta$ is a chain map of degree $m$ means that we have $\delta d=(-1)^m(d\otimes \id+\id\otimes d)\delta$, while coassociativity means as above $(\delta\otimes \id)\delta=(-1)^m(\id\otimes \delta)\delta$.

\item $\delta : A\rightarrow A\otimes A$ is a right and left differential $A$-module map. Using the simplified Sweedler notation this reads
$$
\sum_{(xy)} (xy)'\otimes (xy)''=\sum_{(y)}(-1)^{m|x|} xy'\otimes y''= \sum_{(x)}  x'\otimes x''y.
$$
\end{enumerate}
Note that we must have $\deg x' +\deg x''=\deg x + m$ since the coproduct is assumed to have degree $m$.

We have in particular 
$$
\sum_{(dx)} (dx)'\otimes(dx)'' =(-1)^m  \sum_{(x)} (dx'\otimes x''+(-1)^{|x'|} x'\otimes dx'')
$$
and
$$
\sum (x')'\otimes (x')''\otimes x''=\sum (-1)^{m|x'|+m}x'\otimes (x'')'\otimes (x'')''.
$$
We shall say $(A,\cdot,\delta)$ is \emph{symmetric} if 
$ \sum_{(1)} 1'\otimes 1''= \sum_{(1)}(-1)^{|1'||1''|+m} 1''\otimes 1'$. 
This can be interpreted as a weak version of cocommutativity. In the case of closed 
Frobenius algebras it is a direct consequence of the inner product being symmetric (see Proposition~\ref{pro-frob} below).
\end{definition}

We recall that a closed (DG) Frobenius algebra is a finite dimensional unital associative differential graded $\kk$-algebra $A=\oplus_{i\geq 0} A_i$ equipped with a \emph{symmetric inner product}  $\langle -,- \rangle$ 
such that the map $ \alpha: x\mapsto (y\mapsto  \alpha_x(y):=(-1)^{|x|+m} \langle x, y \rangle=(-1)^{|y|} \langle x, y \rangle)  $ from $A$ to $A^\vee$ is a degree $m$ isomorphism of differential graded $A$-bimodules.  
The condition that the inner product is symmetric means that 
$$ \langle x,y \rangle=(-1)^{|x||y|}\langle y,x \rangle=(-1)^{|x|(m-|x|)}\langle y,x\rangle.$$

Since $\alpha$ is of degree $m$, the explicit condition that characterizes $A$-bi-equivariance takes a slightly more complicated form which involves the degree. Let us spell this out since it is important in order to get the signs right. 
Let $L:A\otimes A^\vee\to A^\vee$ and $R:A^\vee\otimes A\to A^\vee$ be respectively the left and right  action of $A$ on  $A^\vee$.
We will use the same notation for the action of $A$ on itself. That $\alpha$ is $A$-bi-equivariant means that
$$
L\circ (\id\otimes \alpha)=\alpha \circ L
$$
and 
$$
R\circ (\alpha \otimes \id)=\alpha \circ R.
$$
By evaluating on elements $x,y\in A$ we obtain $(-1)^{m|y|}y\cdot \alpha_x=\alpha_{yx}$ and $  \alpha_x\cdot y  =\alpha_{xy}$.  

Note also that the definition of closed Frobenius algebras implies that
$$
\langle xy,z \rangle=\langle x,yz\rangle
$$
and 
 \begin{equation}\label{eqclosed}
  \langle dx,y \rangle=-(-1)^{|x|}\langle x,dy \rangle.
  \end{equation}
 In fact, $\alpha$ being a $A$-bimodule map implies that the inner product is symmetric. 
  
We can now define a coproduct $\delta: A\to A\otimes A$ by requiring the diagram

\begin{equation}\label{diagfrob}
\xymatrix
{ A\ar[r]^{\alpha}\ar[dd]^-{\delta} & A^\vee \ar[rr]^-{\text{dual of the product}} &&(A \otimes A)^\vee \\
&  A^\vee\otimes A^\vee \ar[urr]_-{i_{A,A}}&  &\\
A\otimes A \ar[ur]_-{\alpha\otimes \alpha}}
\end{equation} 
to be commutative. Note that in the diagram above the maps $\alpha$, $\alpha\otimes\alpha$ and  $i_{A,A}$ are isomorphisms because $\dim A <\infty$, therefore $\delta$  exists and is unique because of  the non-degeneracy of the inner product. The coproduct $\delta x=\sum_{(x)}x'\otimes x''$  is characterized by the identity

 \begin{equation}\label{eqclosedco}
\langle x,ab \rangle =  \sum_{(x)} (-1)^{m |x'|} \langle x'',a \rangle \langle x',b \rangle. \end{equation}
Since the inner product has  degree $m$, we  obtain
 \begin{equation}
\langle x,ab \rangle =(-1)^{m|b|+m}  \sum_{(x)}  \langle x'',a \rangle \langle x',b \rangle, \end{equation}
which in the special case  $x=1$ reads
 \begin{equation}
\langle a,b \rangle=  \langle 1,ab \rangle =(-1)^{m|b|+m}  \sum_{(1)}  \langle 1'',a \rangle \langle 1',b \rangle. 
\end{equation}

The coproduct $\delta$  is coassociative of degree $m$ --- this is a good exercise for the reader --- and satisfies condition (3) of Definition \ref{defopfrob} because all the other maps in the diagram (\ref{diagfrob}) are morphisms of $A$-bimodules.
Let us also check this last fact directly. We have
\begin{eqnarray*}
\sum(-1)^{m|(xy)'|} \langle  (xy)'',a \rangle \langle (xy)',b \rangle 
& = & \langle xy,ab \rangle \\
& = & \langle x,yab \rangle \\
& = & \sum (-1)^{m |x'|} \langle x'',ya \rangle \langle x',b \rangle\\
& = & \sum (-1)^{m|x'|} \langle x''y,a \rangle \langle x',b \rangle.
\end{eqnarray*}
Together with the non-degeneracy of the inner product this implies
$$
\sum_{(xy)} (xy)'\otimes (xy)''=\sum_{(y)}x'\otimes x'' y.
$$
Similarly 
\begin{eqnarray*}
\sum(-1)^{m|(xy)'|} \langle  (xy)'',a \rangle \langle (xy)',b \rangle 
& = & \langle xy,ab \rangle \\
& = & (-1)^{|x|(m-|x|)}\langle y,abx \rangle \\
& = & (-1)^{|x|(m-|x|)}\sum (-1)^{m |y'|} \langle y'',a \rangle \langle y',b x\rangle\\
& = & \sum (-1)^{m|y'|} \langle y'',a \rangle \langle xy',b \rangle\\
& = & \sum (-1)^{m|x|}(-1)^{m|xy'|} \langle y'',a \rangle \langle xy',b \rangle,
\end{eqnarray*}
so that
$$
\sum_{(xy)} (xy)'\otimes (xy)''=\sum_{(y)}(-1)^{m|x|}xy'\otimes y''.
$$
In other words, a closed Frobenius algebra over a field is also an open Frobenius algebra. 

By replacing  $b=1$ in (\ref{eqclosedco}), we obtain
 \begin{eqnarray*}
\langle x,a \rangle & = & \langle x,a1  \rangle\\
& = & \sum_{(x)}(-1)^{m|x'|}  \langle x'',a \rangle \langle x',1 \rangle\\
& = & \sum_{(x)} (-1)^{m|x'|}  \langle  \langle x',1 \rangle  x'',a \rangle,
\end{eqnarray*}
 which implies 
  \begin{equation}\label{eqclosedco3}
 x= \sum_{(x)} (-1)^{m|x'|} \langle x',1 \rangle  x''.
 \end{equation}
Similarly by taking $a=1$ we obtain
 \begin{equation*}
\langle x,b \rangle  =\langle \sum (-1)^{m|x'|} \langle x'',1 \rangle  x',b \rangle.
\end{equation*}
The non-degeneracy of the inner product implies that
 \begin{equation}\label{eqclosedco2}
 x=\sum_{(x)}  (-1)^{m|x'|}  \langle x'',1 \rangle  x'.
  \end{equation}
In other words $\eta(x)=\langle x,1\rangle$ is a counit, i.e. 
\begin{equation}\label{eq-cou}
x=\sum_{(x)}  (-1)^{m|x'|}\eta(x')x''=\sum_{(x)}  (-1)^{m|x'|}\eta(x'')x'.
\end{equation}

\bigskip

\noindent \textbf{Example 1.} An important example of symmetric open Frobenius algebra is the cohomology with compact support  $H_c^*(M)$ of an oriented $n$-dimensional manifold $M$  (not necessarily closed).  
Note that $H_c^*(M)$  is naturally equipped with the usual cup product and, using the Poincar\'e duality isomorphism (see~\cite[Theorem 3.35]{Hatcherbook})
$$
H^*_c(M) \simeq H_{n-*}(M),
$$
one can transfer the natural coproduct $\Delta$ from $H_*(M)$ to  $H_c^*(M)$.   Then the triple $(H^*_c(M) , \cup, \Delta)$ is a symmetric open Frobenius algebra whose differential is identically zero. The Frobenius compatibility condition is satisfied because the Poincar\'e duality isomorphism above is a map of $H_c^*(M)$-modules.  Note that this open Frobenius structure is only natural with respect to proper maps.
% or inclusion (of open sets).
 
 If $M$ is closed then $H^*(M)=H^*_c(M)$ is indeed a closed Frobenius algebra since $H^*(M)$ has a counit given by $\int :H^*(M)\to \Z$, the evaluation on the fundamental class of $M$, while Poincar\'e duality is given by capping with the fundamental class.  The non-degenerate inner product is defined by $\langle x,y\rangle:= \int_{[M]} x\cup y$. Over  the rationals it is possible to lift this Frobenius algebra structure to the level of cochains. By a result of Lambrechts and Stanley~\cite{LamStan} there is a connected finite dimensional commutative DG algebra $A$ which is quasi-isomorphic to $C^*(M)$, the cochains on a given $n$-dimensional manifold $M$,  equipped with a bimodule isomorphism $A\rightarrow A^\vee $ inducing the Poincar\'e duality isomorphism $H^*(M)\rightarrow H_{n-*}(M)$.
 
\bigskip

 \noindent \textbf{Example 2.} 
 It is known that the homology of the free loop space of a closed oriented manifold is an open Frobenius algebra\cite{CG}. Similarly the Hochschild cohomology $HH^*(A)$ of a closed Frobenius  algebra is  an open Frobenius algebra \cite{TZ}.

\begin{proposition}\label{pro-frob}
\begin{numlist} 
\item A closed Frobenius algebra is a symmetric open Frobenius algebra.
\item A symmetric open Frobenius  algebra $A$ with a counit is finite dimensional and in fact is a closed Frobenius algebra.
\item A symmetric and commutative open Frobenius algebra is cocommutative.
\item Given any element $z$ of a symmetric open Frobenius  algebra $A$, the element $\sum_{(z)}(-1)^{|z''||z'|} z''z'$ belongs to the center of  $A$.
\end{numlist}
\end{proposition}

\begin{proof}
(1) This follows from the characterization (\ref{eqclosedco3}). Indeed, we have
 \begin{eqnarray*}
\lefteqn{(-1)^{m|b|+m}   \sum_{(1)} (-1)^{|1'||1''|+m}  \langle 1',a \rangle \langle 1'',b \rangle}\\
& = & (-1)^{m|b|}    \sum_{(1)} (-1)^{(m-|a|)(m-|b|)}  \langle 1',a \rangle \langle 1'',b \rangle\\
&= & (-1)^{m|b|+m^2-m(|a|+|b|)+|a||b|}    \sum_{(1)}  \langle 1',a \rangle \langle 1'',b \rangle\\
& = & (-1)^{|a||b|+m|a|+m}    \sum_{(1)}  \langle 1',a \rangle \langle 1'',b \rangle \\
& = & 
(-1)^{|a||b|} \langle b,a\rangle \\
& = & \langle a,b\rangle\\
& = & (-1)^{m|b|+m}\sum_{(1)} \langle 1'',a\rangle\langle 1',b\rangle,
\end{eqnarray*}
so that $\sum_{(1)}  (-1)^{|1'||1''|+m} 1''\otimes 1'=\sum_{(1)}   1'\otimes 1''$.

(2)  The inner product is defined by $\langle x,y\rangle=\eta(xy)$ and is clearly invariant. The identity $x= \sum (-1)^{m|x'|}\eta (x'')x'=\sum (-1)^{m|1'|}\eta (1''x)1'$ proves that $A$ is the $\kk$-linear span of the elements $1'$, and in particular it is finite dimensional.

Now we must prove that $\langle-,-\rangle$ is symmetric.  By the identity (\ref{eq-cou}) we can write $$xy=\sum (-1)^{m|x'|}\eta(x'') x' y=\sum (-1)^{m|1'|}\eta(1''x) 1' y,$$ therefore for all $x$ and $y$
\begin{equation*}
\langle x,y\rangle=\sum (-1)^{m|1'|}\eta(1''x) \eta(1' y).
\end{equation*}
Since $A$ is symmetric, we have
\begin{eqnarray*}
\langle x,y\rangle&=& \sum (-1)^{m|1'|}\eta(1''x) \eta(1' y)Ê\\
& = & \sum (-1)^{m|1''|+|1'||1''|+m}\eta(1'x) \eta(1'' y)\\
& = & \sum (-1)^{m (m-|1'|)+ (m-|x|)(m-|y|))+m}\eta(1'x) \eta(1'' y)\\
& = &\sum (-1)^{m|1'|+|x||y|}\eta(1'x) \eta(1'' y)\\
& = & (-1)^{|x||y|} \sum (-1)^{m|1'|}\eta(1'x) \eta(1'' y) \\
& = & (-1)^{|x||y|} \langle y,x\rangle.
\end{eqnarray*}

(3) We check directly the cocommutativity condition: 
\begin{eqnarray*}
\sum x'\otimes x'' & = & \sum (-1)^{m|x|}x 1'\otimes 1 '' \\
& = & \sum (-1)^{m|x|+|1''||1'|+m} x1''\otimes 1' \\
& = & \sum  (-1)^{m|x|+|1''||1'|-|x||1''|+m}  1''x\otimes 1' \\
& = & \sum (-1)^{m|x|+(|x''|-|x|)|x'|-|x|(|x''|-|x|)+m} x''\otimes x' \\
& = & \sum (-1)^{|x'||x''|+|x|(m+|x|-|x'|-|x''|)+m} x''\otimes x' \\
& = & \sum (-1)^{|x'||x''|+m} x''\otimes x'.
\end{eqnarray*}

(4) This is again a direct check:
\begin{eqnarray*}
x\sum_{(z)} (-1)^{|z''||z'|} z''z' & = & \sum_{(z)}(-1)^{|z''||z'|}  xz''z' \\
& = & \sum_{(1)}(-1)^{(|1''|+|z|)|1'|}x 1''z1' \\
& = & \sum_{(1)}(-1)^{(|1'|+|z|)|1''| +|1'||1''|+m}x 1'z1'' \\
& = & \sum_{(1)}(-1)^{|z||1''|+m}x 1'z1''\\
& = & \sum_{(x)}(-1)^{m|x|+|z| |x''|+m} x'zx'' \\
& = & \sum_{(x)}(-1)^{m|x|+|z| (|1''|-|x|)+m}   1'z1'' x \\ 
& = & \sum_{(1)} (-1)^{m|x|+|z| (|1'|-|x|)+|1'||1''|}  1''z1' x \\ 
& = & \sum_{(1)} (-1)^{m|x|+|z| (|z'|-|x|)+|z'|(|z''|-|z|)}  z''z' x \\
& = & (-1)^{(m+|z|)|x|}(\sum_{(z)}(-1)^{|z''||z'|} z''z')x.
\end{eqnarray*}
We recall that  $\sum_{(z)}(-1)^{|z''||z'|} z''z'$ is of degree $m+|z|$.
\end{proof}

\begin{remark}
We can relax the finite dimensionality condition from the definition of closed Frobenius algebras and only require that the map $\alpha:A\mapsto A^\vee$ be a quasi-isomorphism. Of course such an algebra $A$ may no longer be an open Frobenius algebra. However $HH^*(A,A)\simeq HH^*(A,A^\vee)[m]$ will still be a BV-algebra by the results of Section~\ref{sec:Poincare}.
\end{remark}

\emph{Starting from this point we do not write the signs anymore and, for the sake of simplicity, statements are given and proved over $\Z_2$ though all the results hold indeed over $\Z$.}

\smallskip

As we know $HH^*(A,A^\vee)$ is already equipped with a candidate for the BV-operator, namely the Connes operator $B^\vee $. We just need a product on  $HH^*(A,A^\vee)$, or ideally  a coproduct on the Hochschild chains $C_*(A,A)$. This is given by
\begin{equation} \label{eq-cup-frob}
\theta (a_0[a_1,\cdots,a_n])=\sum_{(a_0), 1\leq i\leq n} (a_0'' [a_1,\cdots,a_{i-1},a_i ]) \otimes (a_0'  [a_{i+1}, \cdots,a_{n}]).
\end{equation}
Then we can define a product of $f,g \in C^*(A,A^\vee)= \Hom (A \otimes T(s\bar{A}), \kk)$ by
$$
(f \ast g)(x):= \mu(f\otimes g )\theta(x),
$$
where $\mu: \kk\otimes \kk \rightarrow \kk$  is the multiplication. More explicitly
$$(f \ast g)(a_0 [a_1,\cdots,a_n])= \sum_{(a_0), 1\leq i< n}  f(a''_0 [a_1,\cdots,a_{i-1}, a_i]) g(a'_0  [a_{i+1}, \cdots,a_{n}]).$$ 
Note that  this coproduct is of degree $m$ so the expected BV and CoBV structures are defined respectively on  $HH_*(A,A)[m]$ and $HH^*(A,A^\vee)[m]$.

In the case of a closed Frobenius algebra this product corresponds to the standard cup product on $HH^*(A,A)$ using the isomorphism
\begin{equation*}
HH^*(A,A)\simeq HH^*(A,A^\vee)
\end{equation*}
induced by the inner product on $A$.  More explicitly, we identify  $A$ with $A^\vee$
using the map $ a\mapsto  (a^\vee(x):=\langle a,x\rangle )$. Therefore to a cochain
$f \in C^*(A,A)$, $f:A^{\otimes n}\to A$,  corresponds a cochain $\tilde{f}\in C^*(A)= C^*(A,A^\vee)$ given
by $\tilde{f} \in \Hom (A^{\otimes n }, A^\vee)\simeq  \Hom (A^{\otimes (n+1)},\kk)$,
\begin{equation*}
\tilde{f}(a_0,a_1,\cdots, a_n):=\langle  f(a_1,\cdots, a_n),a_0\rangle.
\end{equation*}
Note that the map $f\mapsto \tilde{f}$ is of degree $-m$ and its inverse is given by 
\begin{equation*}
f(a_1,\cdots, a_n):=\sum_{(1)}  \tilde {f}(1'',a_1,\cdots, a_n)1' .
\end{equation*}
For two cochains $f: A^{\otimes p} \to A$ and $g: A^{\otimes q} \to A$  in $C^*(A,A)$ we have

\begin{equation*}
\begin{split}
\widetilde{f \cup g}(a_0, a_1,\cdots a_{p+q})&= \langle ( f \cup g)( a_1,\cdots, a_{p+q}), a_0\rangle\\
& = \langle f(a_1,\cdots, a_{p})g(a_{p+1},\cdots, a_{p+q}),a_0\rangle\\ 
&=\langle f(a_1,\cdots, a_{p}),g(a_{p+1},\cdots, a_{p+q})a_0\rangle\\
&= \sum_{(a_0)} \langle f(a_1,\cdots, a_{p}),\langle g(a_{p+1},\cdots, a_{p+q}),a'_0 \rangle a''_0\rangle\\ &= \sum_{(a_0)} \langle f(a_1,\cdots, a_{p}),\langle g(a_{p+1},\cdots, a_{p+q}),a'_0 \rangle a''_0\rangle\\
&=\sum_{(a_0)} \langle f(a_1,\cdots, a_{p}),a''_0\rangle \langle g(a_{p+1},\cdots, a_{p+q}),a'_0 \rangle\\
&=
(\tilde{f} \ast \tilde{g})(a_0[a_1,\cdots, a_{p+q}]).
\end{split}
\end{equation*}

\begin{theorem} \label{themop-cop} For a symmetric open Frobenius algebra $A$,  $(HH_*(A,A)[m], \theta, B)$ is a coBV-algebra.
As a consequence $(HH^*(A,A^\vee)[m], \theta^\vee, B^\vee)$ is a BV-algebra. In particular, if $A$ is closed Frobenius algebra the natural isomorphism 
\begin{equation}\label{iso-clo-frob}
(CC^*(A,A^\vee)[m], \theta^\vee) \simeq (CC^*(A,A),\cup)
\end{equation}
endows $(HH^*(A,A), \cup)$ with a natural BV-algebra structure whose BV-operator is the image of the Connes operators $B$ under the isomorphism~\eqref{iso-clo-frob}. 
\end{theorem}

This statement recovers Tradler's result in~\cite{Tradler} for closed Frobenius algebras.

\begin{proof}
To prove the theorem we show that $(C_*(A,A), \theta, B)$ is a homotopy coBV-coalgebra.
It is a direct check that $\theta$ is co-associative. Just like in the above, we transfer the homotopy for commutativity (in the case of closed Frobenius algebra) of the cup product as given  in Theorem~\ref{Gersten} to 
$C^*(A,A^\vee)$ and then dualize it. It turns out that the resulting formula only depends on the product and coproduct, so it makes also sense for open Frobenius algebras. 
The homotopy for cocommutativity is given by
\begin{eqnarray}\label{homotpy-cocom}
\lefteqn{h(a_0  [a_1, \cdots ,a_n])}\\
&:=& \sum_{(1), 0\leq i<j\leq n+1} ( a_0 [ a_1 ,\cdots, a_i, 1'',a_{j},\cdots, a_{n}] )\otimes (1'  [ a_{i+1},\cdots, a_{j-1}] ),\nonumber 
\end{eqnarray}
where for $j=n+1$ and $i=0$  the corresponding terms are respectively 
$$
 (a_0 [ a_1 ,\cdots ,a_i, 1''] )\otimes (1' [ a_{i+1},\cdots,a_{n}])
$$
and
$$
 (a_0 [ 1'',a_{j},\cdots, a_{n}] )\otimes (1' [ a_{1},\cdots, a_{j-1}]).
$$

It is a direct check that $ hd+(d\otimes \id+ \id\otimes d)h=\theta +\tau \circ \theta$  where $\tau: C_*(A) \otimes C_*(A)\rightarrow C_*(A) \otimes C_*(A)$ is given by  $\tau (\alpha_1\otimes \alpha_2)= \alpha_2\otimes \alpha_1$.

To prove that the 7-term (coBV) relation holds, we use the Chas-Sullivan~\cite{CS1} idea (see also~\cite{Tradler}) from the case of the free loop space, adapted to the combinatorial (simplicial) situation. First we identify the Gerstenhaber co-bracket explicitly.  Let
$$
S:= h+\tau\circ h.
$$
Once having proved that $S$ is, up to homotopy, the deviation of $B$ from being a coderivation for $\theta$, the $7$-term homotopy coBV relation is equivalent to the homotopy co-Leibniz identity for $S$.

\subsubsection*{Co-Leibniz identity:} The idea of the proof is identical to Lemma 4.6 in~\cite{CS1}. We prove that up to some homotopy we have
\begin{equation}\label{coleib}
(\theta \otimes \id) S=(\id\otimes \tau)(S\otimes \id)\theta+ (\id \otimes S) \theta.
\end{equation}
It is a direct check that
$$
(\id\otimes \tau)(h\otimes \id)\theta + (\id \otimes h)\theta=(\theta \otimes \id)h,
$$
so to prove (\ref{coleib})  we should prove that up to some homotopy
\begin{equation}
(\id\otimes \tau)(\tau h\otimes \id)\theta + (\id\otimes  \tau h)\theta=(\theta \otimes \id)\tau h.
\end{equation}

The homotopy is given by $H: C^*(A)\rightarrow C^*(A)^{\otimes 3} $
\begin{eqnarray*}
\lefteqn{H(a_0 [a_1, \cdots ,a_n])}\\
&=& \sum_{0\leq l< i\leq j<k} \sum_{(1),(1)} ( 1''[a_l,\cdots, a_{i-1}]) \otimes (1''  [a_{j},\cdots, a_{k-1}])\\ 
&&\qquad \qquad \qquad \otimes \, a_0[a_{1},\cdots , a_{l},1',a_i,\cdots , a_{j-1},1',a_k,\cdots, a_n].
\end{eqnarray*}
Note that in the sum above the sequence $a_i\cdots a_{j-1}$ can be empty. The identity
\begin{eqnarray*}
\lefteqn{(d\otimes \id \otimes \id+\id\otimes d \otimes \id+ \id\otimes \id \otimes d)H+Hd}Ê\\
& = & (\id\otimes \tau)(\tau h\otimes \id)\theta + (\id\otimes  \tau h)\theta+(\theta \otimes \id)\tau h 
\end{eqnarray*}
can then be checked directly.

\subsubsection*{Compatibility of $B$ and $S$:} The final step is to prove that 
$$
S= \theta B + (B \otimes \id + \id\otimes B) \theta
$$ 
up to homotopy. For that we prove that $h$ is homotopic to $(\theta B)_2+(B\otimes \id)\theta$ and similarly $\tau h \simeq (\theta B)_1+(\id \otimes B)\theta $, where  $\theta B= (\theta B)_1 +(\theta B)_2$ with 
\begin{eqnarray*}
\lefteqn{(\theta B)_1(a_0 [a_1,\cdots ,a_n])}\\
&=& \sum_{0\leq i\leq j\leq n}  \sum_{(1)}( 1'' [a_i,\cdots  , a_{j}])\otimes (1' [a_{j+1}, \cdots, a_n, a_0, \cdots, a_{i-1}])
\end{eqnarray*}
and
\begin{eqnarray*}
\lefteqn{(\theta B)_2(a_0 [a_1,\cdots , a_n])}\\
&= \sum_{0<i<j\leq n} \sum_{(1)}  (1'' [a_{j},\cdots  ,a_n ,a_0, a_1, \cdots  , a_{i}])\otimes (1'[a_{i+1},\cdots , a_{j-1}]).
\end{eqnarray*}
It can be easily checked that
\begin{equation*}
\begin{split}
H(a_0  [a_1,\cdots , a_n])= \sum_{0 \leq k\leq i < j \leq n+1} \sum_{(1)}   (& 1[ a_{k+1},\cdots, a_{i},1'',a_{j},\cdots, a_{n},a_0,\cdots, a_k] )\\ &\otimes (1' [ a_{k+1},\cdots,a_{j-1}] )
\end{split}
\end{equation*}
is a homotopy between $h$ and $(\theta B)_2+(B \otimes \id)\theta$. In the formulae describing $H$, the sequence $a_j,\cdots, a_{i-1}$ can be empty.

While computing $dH$ we see that the terms corresponding to $k=0$ are exactly equal to
\begin{eqnarray*}
\lefteqn{(B\otimes 1)\theta(a_0  [a_1,\cdots ,a_n])}\\
&=&\sum (1 [a_{j},\cdots, a_i,a'_0,a_1,\cdots, a_{j-1}] ) \otimes  (a''_0 [a_{i+1}, \cdots , a_n]).
\end{eqnarray*}
Similarly one proves that $\tau h\simeq (\theta B)_1+(\id \otimes B)\theta $.
\end{proof}

\begin{remark}
By a theorem of F\'elix and Thomas~\cite{FTBV}, this cup product on $HH^*(A,A^\vee)$ is an algebraic model for the Chas-Sullivan product on $H_*(LM)$, the homology of the free loop space of a closed oriented manifold $M$. Here one must work over a field of characteristic zero and for $A$ one can take the closed (commutative) Frobenius algebra provided by the theorem of Lambrechts and Stanley~\cite{LamStan} which asserts the existence of an algebraic model with Poincar\'e duality for $C^*(M)$.
\end{remark}

\begin{theorem}\label{thm-co-op}
For $A$ a symmetric open Frobenius algebra,  $HH_*(A,A)$ can be naturally equipped with a BV-structure whose BV-operator is  Connes' operator and whose product at chain level, before the shift, is given by 
\begin{equation}
a_0[a_1,\cdots,a_n]\circ b_0[b_1,\cdots,b_m]=\begin{cases} 0, & \text{ if } n>0, \\ \sum_{(a_0)}Êa''_0a'_0b_0[b_1,\cdots, b_m], & \text{ otherwise}. \end{cases}
\end{equation}
\end{theorem}
\begin{proof}

Using Proposition~\ref{pro-frob} (4) it is  easily checked that $\circ$ is a chain map. The product $\circ$ is strictly associative. We only have to check this for $x=a[\quad]$, $y=b[\quad]$ and $z=c[c_1,\cdots,c_p]$. We have
\begin{equation*}
(x \circ y)\circ z= \sum (a''_0a'_0b_0)''(a''_0a'_0b_0)'c_0[c_1,\cdots,c_p]= \sum b''_0a''_0a'_0b'_0c_0[c_1,\cdots,c_p],
\end{equation*}
which by Proposition~\ref{pro-frob} (4) is equal to 
\begin{equation*}
\sum a''_0a'_0b''_0b'_0c_0[c_1,\cdots,c_p] =x\circ(y \circ z).
\end{equation*} 

Next we prove that the product is commutative up to homotopy. Indeed the homotopy for $x=a_0[a_1,\cdots,a_n]$ and $y= b_0[b_1,\cdots,b_m]$ is given by
\begin{equation*}
K(x,y)= %\sum_{(a_0)}(-1)^{(|a'_0|+1)(|a''_0|+|a_1|+\cdots +|a_p| +p)} 
a''_0[ a_1 , \cdots , a_n ,  a_0'b_0 , b_1 , \cdots , b_m].
\end{equation*}
In the particular case when $x=a[\quad]$ and $y=b[\quad ]$ and for the external differential $d_1$ we have  
\begin{eqnarray*}
d_1K(x,y)&= &\sum (a'ba''+a''a'b)\\
&=& \sum a1'b1''+x\circ y\\
&=&\sum a1''b1'+x\circ y\\
&=&\sum ab''b'+x\circ y
\end{eqnarray*}
which, by  Proposition~\ref{pro-frob} (4), is
$$
\sum b''b'a+x\circ y=y\circ x+x\circ y.
$$ 
Since we also have $d_0K(x,y)=K(d_0x,y)+ K(x,d_0y)$ (for all $x$ and $y$), we obtain 
$$
dK(x,y)= K(dx,y)+K(x,dy)+ y\circ x+x\circ y. 
$$

The Gerstenhaber bracket is naturally defined to be 
\begin{equation}\label{eq-gers}
\{x,y\}:=  K(x,y) +K(y,x).
\end{equation}
Next we prove that  the identity
\begin{equation}\label{eq-devia}
\{x,y\}= \Delta(x\circ y)+\Delta x \circ y + x\circ \Delta y  
\end{equation}
holds up to homotopy.  
First note that $\Delta x\circ  y =0$ for all $x$ and $y$. A homotopy between all the remaining three terms is given by $H+ K(\id\otimes \Delta )$, where for $x=a_0[a_1,\cdots,a_n]$ and $y= b_0[b_1,\cdots,b_m]$ we define
\begin{equation*}
\begin{split}
H(x,y)=\sum_{(a_0), 1 \leq k\leq m +1}1[b_k,\cdots, b_m,a_0'',a_1,\cdots a_n,a'_0b_0,b_1,\cdots b_{k-1}].
\end{split}
\end{equation*}

Finally we prove that the Leibniz identity
$$
\{x\circ y, z\}= \{x,z\}\circ y + x\circ \{y, z\}
$$  
holds up to homotopy. Note that each of the expressions $\{x,z\}$ and $\{y, z\}$ is either zero or it belongs to $\oplus_{n>1} A^{\otimes n}$. Therefore the right hand side is always zero up to homotopy, so we need to prove that the left hand side is always homotopic to zero. This is clear if either $x$ or $y$ belongs to $\oplus_{n>1} A^{\otimes n}$.  Using the homotopy version of the identity (\ref{eq-devia}) 
we know that $\{x\circ y, z\}$ is homotopic  to $\Delta (x\circ y \circ z)$, which is homotopic to zero if  $z\in  \oplus_{n>1} A^{\otimes n}$.
So we may assume that  $x=a[\quad]$, $y=b[\quad]$ and $z= c[\quad]$, in which case we have
$$
\Delta (x\circ y \circ z)= \Delta ( (a''ab)'' (a''a'b)' c )=\Delta ( b''a''a'b'c).
$$
Since  $a''a'$ is central, we get that $\Delta (x\circ y \circ z)= 1 [a''a'b''b'c]$, which is homotopic to zero. The homotopy is given by  $H(a,b,c)= 1 [a''a',b''b'c]$, whose boundary is
$$
a''a'[b''b'c] +1 [a''a'b''b'c]+ b''b'cÕ[a''a'].
$$
In fact  $a''a'[b''b'c]=b''b'cÕ[a''a']$ because
\begin{equation*}
\begin{split}
\sum a''a'[b''b'c]&= \sum1''a1'[1''b1'c]=\sum 1'a1'[1''b1''c]= \sum c'ab'[b''c'']\\
&=\sum c1'a1'[1''b1'']= \sum c1'ab'[b''1'']  =\sum c1'ab1'[1''1''] \\
&=\sum c1''ab1'[1''1']=\sum ca''b1'[1''a']=\sum c1''b1'[1''a1']\\
&=\sum cb''b'[a''a']=\sum b''bc'[a''a'].
\end{split}
\end{equation*}
\end{proof}

\section{Symmetric commutative open Frobenius algebras and BV-structure on shifted relative Hochschild homology} \label{sec:Frobenius-shifted}

In this section we exhibit a BV-structure on the relative Hochschild homology of a symmetric commutative open Frobenius algebra. In particular we introduce a product on the shifted relative Hochschild homology  of symmetric commutative Frobenius algebras, whose dual could be an algebraic model for the Chas-Sullivan~\cite{CS2} / Goresky-Hingston~\cite{GorHing} coproduct on $H_*(LM,M)$. One should note that Chas-Sullivan worked with an equivariant version of this product on $H_*^{S^1}(LM, M)$, a construction that generalizes the Turaev co-bracket~\cite{Tura}. 
 
 For a commutative DG-algebra  $A$ the \emph{relative Hochschild chain complex} is defined to be
 \begin{equation}
 \widetilde{C}_*(A)= \oplus_{n\geq 1} A\otimes \bar{A}^{\otimes n}
 \end{equation}
equipped with the Hochschild differential, where $\bar{A}$ is the kernel of the augmentation $A\to \kk$. Since $A$ is commutative, $ \widetilde{C}_*(A)$ is stable under the Hochschild differential and fits into the split short exact sequence of complexes
\begin{equation}
\xymatrix{0 \ar[r] & (A, d_A) \ar[r] & C_*(A,A)\ar[r] &  \widetilde{C}_*(A)\ar[r]& 0.}
\end{equation}
The homology of  $\widetilde{C}_*(A)$ is denoted $\widetilde{HH}_*(A)$ and is called \emph{the relative Hochschild homology of $A$}.

\begin{theorem}\label{thm-BV_hom}  The shifted relative Hochschild homology $\widetilde{HH}_*(A)[m-1]$ of a degree $m$ symmetric commutative open Frobenius algebra $A$ is a BV-algebra, whose BV-operator is the Connes operator and whose product is given by
\begin{equation}
\begin{split}
x\cdot y & = \sum_{(a_0b_0)}(a_0b_0)'[ a_1 , \cdots , a_m ,  (a_0b_0)'' , b_1 , \cdots , b_n]\\
& = \sum_{(a_0)}a'_0[ a_1 , \cdots , a_m ,  a_0''b_0 , b_1 , \cdots , b_n]\\
& = \sum_{(b_0)}a_0b_0'[ a_1 , \cdots , a_m ,  b_0'' , b_1 , \cdots , b_n]\\
\end{split}
\end{equation}
for $x=a_0 [a_1,\cdots ,a_m]$ and $y= b_0 [b_1, \cdots ,b_n] \in  \widetilde{C}_*(A) $.
\end{theorem}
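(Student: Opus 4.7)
The plan is to transport, on the Hochschild chain complex $C_*(A,A)$, the strategy used for the preceding theorem, with the new product $\cdot$ playing the role that the coproduct $\theta$ played there, and with Connes' operator $B$ in place of its dual $B^\vee$. Everything hinges on producing explicit chain-level homotopies analogous to the $h$ and $H$ constructed above.

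\textbf{Step 1 (well-definedness and associativity).} First I would verify that the three displayed formulas for $x\cdot y$ coincide; this is a direct translation of the bimodule identity
\begin{equation*}
\sum_{(xy)}(xy)'\otimes (xy)''=\sum_{(x)}x'\otimes x''y=\sum_{(y)}xy'\otimes y''
\end{equation*}
in the definition of an open Frobenius algebra. Associativity $(x\cdot y)\cdot z=x\cdot(y\cdot z)$ then follows by expanding both sides in whichever presentation is convenient: coassociativity of $\delta$ together with the bimodule identity reduces each side to the common expression obtained by splitting $a_0b_0c_0$ coassociatively into three pieces and inserting the two middle pieces at the two gluing positions in $[a_\bullet,\,\cdot\,,b_\bullet,\,\cdot\,,c_\bullet]$.

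\textbf{Step 2 (chain map).} Next I would check $D(x\cdot y)=D(x)\cdot y+(-1)^{|x|}x\cdot D(y)$ where $D=d_0+d_1$. The internal differential $d_0$ passes through $\cdot$ since $\delta$ is a chain map. For $d_1$, the only potentially unmatched terms are those where a face operator acts on the inserted factor $a_0''b_0$, and these are controlled by the Leibniz rule for the product of $A$ combined with the fact that $\delta$ is an $A$-bimodule map.

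\textbf{Step 3 (graded commutativity up to homotopy).} Dually to formula (\ref{homotpy-cocom}), I would write down an explicit homotopy
\begin{equation*}
\hat h\colon C_*(A,A)\otimes C_*(A,A)\longrightarrow C_*(A,A)
\end{equation*}
by splitting the unit $\delta(1)=\sum_{(1)}1'\otimes 1''$ once and interleaving the letters of $y$ among those of $x$ between the two pieces of that splitting. A bookkeeping computation parallel to the one verifying co-commutativity of $\theta$ up to $h$ then yields
\begin{equation*}
D\hat h+\hat h D=\mu-(-1)^{|x|\cdot|y|}\mu\circ\tau,
\end{equation*}
where $\mu(x\otimes y):=x\cdot y$ and $\tau$ is the graded swap. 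This gives graded commutativity on $HH_*(A,A)[1-m]$.

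\textbf{Step 4 (the $7$-term BV relation).} The identity $B^2=0$ is classical. For the BV relation I would adopt the Chas--Sullivan strategy used in the preceding proof. Define the putative bracket
\begin{equation*}
\{x,y\}:=B(x\cdot y)-B(x)\cdot y-(-1)^{|x|}x\cdot B(y),
\end{equation*}
and show, via an explicit chain homotopy $\hat H$ dual to the $H$ employed above, that up to boundaries $\{-,-\}$ coincides with $\hat h-\tau\hat h$. Granting this, a co-Leibniz-style identity for $\hat h-\tau\hat h$ analogous to (\ref{coleib}) combined with $B^2=0$ forces the $7$-term BV identity and the Jacobi identity exactly as in the proof of the preceding theorem.

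\textbf{Main obstacle.} The genuine work lies in Step 4: writing down the explicit homotopy $\hat H$ and verifying the dual co-Leibniz identity with all signs correct. These are chain-level calculations strictly parallel to those of the preceding theorem, but with product and coproduct interchanged and $B$ in place of $B^\vee$; once they are in place, the open Frobenius identities (coassociativity, cocommutativity, and bimodule compatibility) make everything close up, and the shift by $1-m$ accounts for the degree of $\delta$ so that $\cdot$ has degree $0$ and the axioms hold in the standard convention.
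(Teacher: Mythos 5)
Your plan follows essentially the same route the paper takes: check that the open Frobenius identities make the three formulas agree and the product strictly associative, exhibit an explicit chain homotopy for graded commutativity, and then run the Chas--Sullivan argument --- identify a chain-level bracket, show it agrees up to homotopy with the deviation of $B$ from being a derivation by splitting $B$ into two pieces $B_1+B_2$, and reduce the $7$-term relation to a Leibniz identity for the bracket and the product. The only notable (cosmetic) difference is that the paper represents the Gerstenhaber bracket by an explicitly defined circle product $x\circ y$ (inserting $\delta(a_0)$ around the letters of $x$ into the word of $y$) rather than by the antisymmetrized commutativity homotopy $\hat h-\tau\hat h$ you propose; this choice makes one half of the Leibniz rule, $x\circ(y\cdot z)=(x\circ y)\cdot z+y\cdot(x\circ z)$, hold strictly, leaving only the other half to be handled by a homotopy.
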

\begin{proof}
%Note that the identities above hold because $A$ is an open Frobenius algebra. The product defined above is a chain map and is strictly associative, but is commutative only up to homotopy. The homotopy is given by
Note that the identities above hold because $A$ is an open Frobenius algebra.
The product defined above is a chain map and is strictly associative because of the commutativity condition (hence the cocommutativity of the coproduct on $A$, 
see Proposition~\ref{pro-frob}). However it is commutative only up to homotopy. 
The homotopy is given by
\begin{equation}
\begin{split}
H_{1} (x, y)=&   \sum_{(a_0b_0)} 
            1[  a_{1},\cdots , a_{n},(a_0b_0)', b_1,\cdots , b_m , (a_0b_0)'']
\\ & + \sum_{i=1}^n \sum_{(a_0b_0)} 
            1[ a_{i+1},\cdots , a_{n} ,(a_0b_0)', b_1 , \cdots, b_m , (a_0b_0)''  , a_1,\cdots ,a_i].
\end{split}
\end{equation}
To prove that the 7-term relation holds, we adapt once again the idea of Chas and Sullivan~\cite{CS1} to a simplicial situation. First we identify the Gerstenhaber bracket directly. Let

\begin{equation}
\begin{split}
x\circ y:= \sum _{i= 0 }^m \sum_{(a_0)}  b_0[ b_1 , \cdots, b_{i} , a_{0}' , a_{1}, \cdots, a_{n}, a_{0}'',  b_{i+1}, \cdots, b_{m}],
\end{split}
\end{equation}
and then define $\{x,y\}:=x\circ y + y\circ x$.  Next we prove that the bracket $\{-,-\}$ is homotopic to the deviation of the BV-operator from being a derivation. For that we decompose $\Delta (x\circ y)$ into two pieces:

\begin{equation*}
\begin{split}
B_{1} (x,y):= \sum_{j=1}^{m}  \sum_{(a_0b_0)}1[b_{j+1}, \cdots,  b_{m}, (a_0b_0)', a_1, \cdots, a_n ,  (a_0b_0)'' , b_1 , \cdots , b_j],
\end{split}
\end{equation*}

\begin{equation*}
\begin{split}
 B_{2} (x,y):=  \sum_{j=1}^{m}  \sum_{(a_0b_0)}1[a_{j+1},\cdots,  a_{n}, (a_0b_0)', b_1, \cdots, b_m ,  (a_0b_0)'' , a_1 , \cdots , a_j],
\end{split}
\end{equation*}
so that  $B=B_1+B_2$.  Then $x\circ y $ is homotopic  $ B_1(x,y)+x\cdot By$. In fact the homotopy is given by
\begin{equation*}
\begin{split}
H_{2}(x,y) &= \!\!\!\!\sum_{0\leq j\leq i\leq m}  \sum_{(a_0)}1 [b_{j+1}, \cdots , b_{i} ,  a_{0}' , a_{1}, \cdots, a_{n}, a_{0}'' , b_{i+1},\cdots , b_m,  b_{0}, \cdots, b_{j}].
\end{split}
\end{equation*}
Similarly for $y\circ x$ and $B_2(x\cdot y) +Bx \cdot y$. Therefore we have proved that on $HH_*(A,A)$ the following identity holds:

$$
\{x,y\}=B(x\cdot y)+Bx\cdot y+ x\cdot By.
$$
Now proving the 7-term relation is equivalent to proving the Leibniz rule for the bracket and for the product, i.e. 
$$
\{x,y\cdot z\}= \{x,y\}\cdot z+y\cdot \{x,z\}.
$$
It is a direct check that $x\circ (y \cdot z)=(x\circ y)\cdot z+ y\cdot(x\circ z)$. On the other hand $(y\cdot z) \circ x$ is homotopic to
$(y\circ x) \cdot z  + y\cdot (z\circ x) $ via the homotopy
\begin{eqnarray*}
\lefteqn{H_{3}(x,y,z)}\\
 &=&\!\!\!\!\sum a_0 [a_1, \cdots , a_i, b_0', b_1 , \cdots , b_n,b_0'', a_{i+1},\cdots , a_j , c_0', c_1, \cdots, c_m, c_0'', a_{j+1}, \cdots ,a_p].
\end{eqnarray*}
Here $z=c_0[c_1, \cdots , c_p]$.  This proves that the Leibniz rule holds up to homotopy.
\end{proof}

\begin{remark} The commutativity assumption is only needed to make $\widetilde{C}_*(A)$ a subcomplex of  $C_*(A)$. For the proof of the previous theorem cocommutativity suffices (exercise).
\end{remark}

\begin{remark} In~\cite{ChenGan} Chen and Gan proved that for an open Frobenius algebra $A$, the reduced coHochschild homology of $A$ seen as a \emph{coalgebra}, is a BV-algebra. They also proved that the reduced Hochschild homology  is a BV- and coBV-algebra. It is necessary to take the reduced Hochschild homology in order to get the coBV-structure.
\end{remark}

\section{Closed Frobenius Algebras: Action of the moduli space of curves via Sullivan chord diagrams}\label{sec:chord}

In this section we extend the operations introduced in Section~\ref{sec:Frobenius} to an action of Sullivan chord diagrams on the Hochschild chains $C_*(A,A)$ [and cochains $C^*(A,A^\vee)$] of a closed Frobenius algebra. The main theorem of this section recovers a theorem in~\cite{TZ}, because the inner product induces an isomorphism  $A\simeq A^\vee$ of $A$-bimodules, so that all structures can be transferred from $HH^*(A,A^\vee)$ to $HH^*(A,A)$. Since we are describing the action on the Hochschild chains there is a difference between our terminology and that of~\cite{TZ}. Here the incoming cycles of a Sullivan chord diagram correspond to outgoing cycles of the same diagram in~\cite{TZ} and~\cite{CG}. Such an action has also been described by  Wahl and Westerland~\cite{WahWest}, who work with integral coefficients and explain how to infer from it an action of the moduli spaces of curves (see Section 2.10 in~\cite{WahWest}).

Recall that a \emph{fat graph} is a graph with a cyclic ordering on the set of edges entering each vertex. 
A \emph{Sullivan chord diagram}~\cite{CS2,CG} is a fat graph that is immersed in the plane and inherits the cyclic ordering at each vertex from the orientation of the plane, and which consists of a finite union of labeled disjoint embedded circles, called \emph{output circles} or \emph{outgoing boundaries}, and of \emph{disjointly immersed} trees whose endpoints land on the output circles. The trees are called \emph{chords} and are thought to have length zero. We assume that each vertex is at least trivalent, therefore there is no vertex on a circle which is not an end of a tree. The graphs don't need to be connected.

Any fat graph, and in particular any Sullivan chord diagram, can be thickened uniquely to an oriented surface with boundary. For a Sullivan chord diagram we require that the cyclic ordering be such that all the output circles are among the boundary components. A \emph{Sullivan chord diagram of type $(g,m,n)$} is a Sullivan chord diagram with $n$ output circles and whose underlying fat graph thickens to a Riemann surface of genus $g$ with $m+n$ boundary components.
The remaining $m$ labeled boundary components are called \emph{input circles} or \emph{ingoing boundaries}. 

We also assume that each incoming circle has a marked point, called \emph{input marked point}, and similarly each outgoing boundary has a marked point, called \emph{outgoing marked point}. As in~\cite {WahWest}, one may think of the output marked point as a leaf, connecting  a tree vertex to the corresponding output circle, but we do not adopt this point of view. We do not  consider marked points  as vertices of the graphs. Also, the marked points and the endpoints of the chords may coincide. The \emph{special points} on the graph are by definition the input and output marked points, together with the endpoints of the chords. 
Note that there is a well-defined cyclic ordering on the special points attached to a chord.

\begin{figure}[htb]
\centering
\includegraphics[scale=0.6]{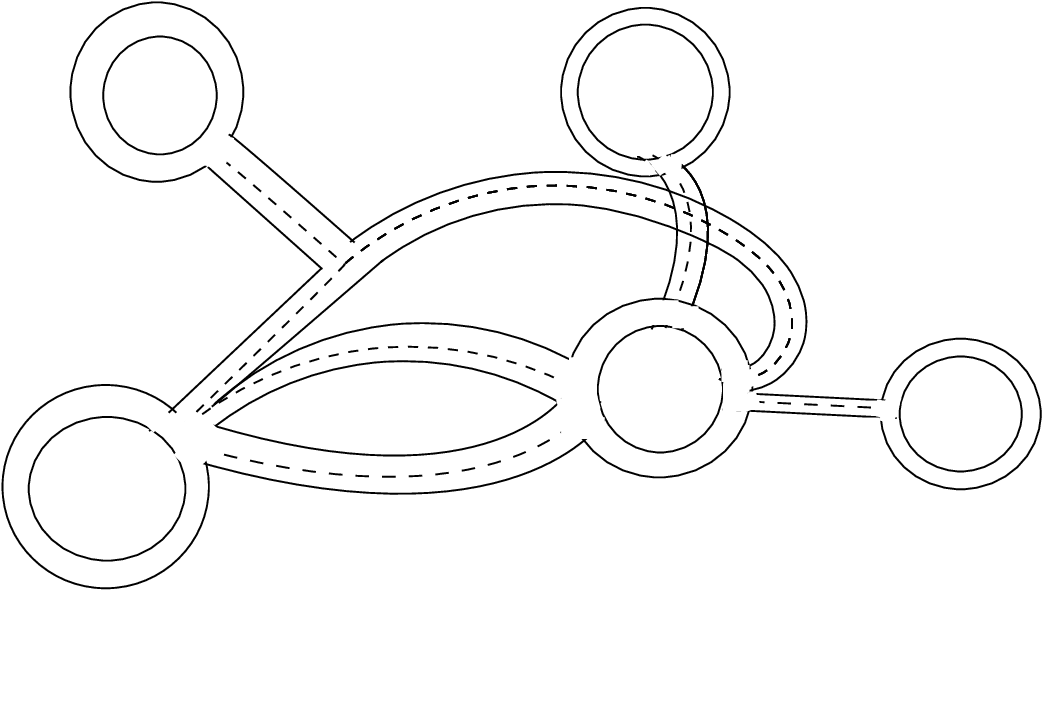}
\caption{}
\label{graph}
\end{figure}

Figure~\ref{graph} displays a chord diagram with 5 outgoing circles and 3 incoming circles. There is an obvious composition rule for two Sullivan chord diagrams if the number of output circles of the first graph equals the number of input circles of the second one. Of course the labeling matters and marked points get identified.  This composition rule makes the space of Sullivan chord diagrams into a PROP (see~\cite{WahWest}  and~\cite{TZ} for more details). Here we don't give the definition of a PROP and we refer the interested reader to~\cite{May} and~\cite{MSS} for more details. 

The combinatorial degree of a diagram of type $(g,m,n)$ is the number of connected components obtained after removing all special points from the output circles, minus $n$. Let $CS_k(g,m,n)$ denote the space of  $(g,m,n)$-diagrams of degree $k$. 
For instance the combinatorial degree of the diagram in Figure~\ref{diag3}  is $1$, which corresponds to the degree of the BV-operator. One makes $\{CS_k(g,m,n)\}_{k\geq 0}$ into a complex using a boundary map which is defined by collapsing an edge (arc) on input circles and considering the induced cyclic ordering. In what follows we describe the action of chord diagrams on chains in $C_*(A,A)$ whose degree is exactly the combinatorial degree of the given diagram. In other words we construct a chain map $CS_k(g,m,n)\rightarrow (\Hom(C_*(A,A)^{\otimes m},C_*(A,A)^{\otimes n}), D:=[d_{Hoch},-])$. Moreover this action is compatible with the composition rule of the diagrams. Said formally, $ C_*(A,A)$ is a differential graded algebra over the differential PROP $\{CS_k(g,m,n)\}_{k\geq 0}$. We won't deal with this last statement.

\subsubsection*{The equivalence relation for graphs and essentially trivalent graphs.} Two  graphs are considered equivalent if one is obtained from the other using one of the following moves:
\begin{itemize}
\item sliding, one at each time,  a vertex on a chord  over edges of the chord. 
\item sliding an input marked point over the chord tree.
\end{itemize} 
By doing so one can easily see that each Sullivan chord diagram is equivalent to a  Sullivan chord diagram for which each vertex is trivalent or has an input marked point, and no input marked point coincides with a chord endpoint.

\subsubsection*{The action of the diagrams.}

Let $\gamma$ be a chord diagram with $m$ input circles and $n$ output circles.  We assume that in $\gamma$ all vertices are trivalent and no input marked point coincides with a chord endpoint (otherwise we replace it with an equivalent trivalent graph as explained above). 

The aim is to associate to $\gamma$ a chain map $C_*(A,A)^{\otimes m} \rightarrow C_*(A,A)^{\otimes  n}$. Let $x_i=a_0^i [a_1^i|\cdots| a_{k_i}^i]$, $1\leq i\leq m$ be Hochschild chains. 

\bigskip

\noindent Step 1) Write down $a_0^i, a_1^i,\cdots, a_{k_i}^i$ on the $i$th input circle by putting first $a_0^i$ on the input marked point and then the rest following the orientation of the circle, on those parts of the $i$th input circle which are not part of the chord tree (at this stage we don't use the output marked point).
We consider all the possible ways of placing $a_1^i,\cdots, a_{k_i}^i$  on the $i$th circle following the rules specified above.

\medskip

\noindent Step 2)  At an output marked point which is not a chord endpoint  or an input marked point  we place a  $1$, otherwise we move to the next step. 

\medskip

\noindent Step 3) On the endpoints of a chord tree  with $r$ endpoints and  no input marked point,  we place following the orientation of the plane $1',1'',\cdots ,1^{(r)} $, where

$$
(\delta\otimes \id ^{(r-2)})\otimes \cdots \otimes (\delta\otimes \id) \delta (1) =\sum_{(1)} 1'\otimes 1''\otimes \cdots \otimes 1^{(r)} \in A^{\otimes r}.
$$

\medskip

\noindent  Step 4) On the endpoints of a chord tree  with $r$ endpoints and which has  $s$  input marked points on its vertices we do the following. We organize the chord tree as a rooted tree whose roots are input marked points.  Now, because of the Frobenius relations, the tree defines an operation $A^{\otimes s}\to A^{\otimes r}$ by using the product and coproduct of $A$. By applying this operation on the element placed on the input marked points (the roots of the tree) we obtain a sum $\sum_i x_i^1\otimes\cdots \otimes x_i^r$.  We decorate the endpoints of the chord tree by   $x_i^r, \cdots , x_i^r$ following the orientation of the plane.

\medskip

\noindent Step 5) For each output circle, starting from its output marked point and following its orientation, we read off all the elements on the outgoing cycle, 
%i.e. the $a_j^i$ left or created after the previous steps, possibly $1$ and the $a^{(k)}$, 
and write them as an element of $C_*(A,A)$. Since the output circles are labeled we therefore obtain a well-defined element of $ C_*(A,A)^{\otimes n}$.
Take the sum over all the labelings/decorations appearing in the previous steps. The result is an element of $C_*(A,A)^{\otimes n}$.

\medskip

Let us illustrate this procedure on some examples. 

\medskip 

The BV-operator~\eqref{B1} corresponds to the diagram in Figure~\ref{diag3}.

\begin{figure}[htb]
\centering
\includegraphics[scale=0.5]{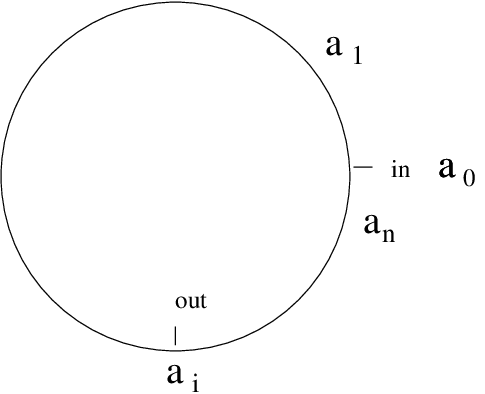}
\caption{BV operator.}
\label{diag3}
\end{figure}

The coproduct (\ref{eq-cup-frob}) 
\begin{equation}
\theta (a_0[a_1,\cdots,a_n])=\sum_{(a_0), 1\leq i\leq n}  (a_0' [a_1,\cdots,a_{i-1},a_i ]) \otimes (a_0''  [a_{i+1}, \cdots,a_{n}])
\end{equation}
corresponds to the diagram in Figure~\ref{Figurenew3}.  The dual of $\theta$ induces a product on $HH^*(A,A^\vee)$ which under the isomorphism $HH^*(A,A^\vee) \simeq HH^*(A,A)$ corresponds  to the cup product on $HH^*(A,A)$ (see Section~\ref{sec:Frobenius}). One should think of the latter as the algebraic model of the Chas-Sullivan product on 
$H_*(LM)$~\cite{CS1}.

\begin{figure}[htb]
\centering
\includegraphics[scale=0.6]{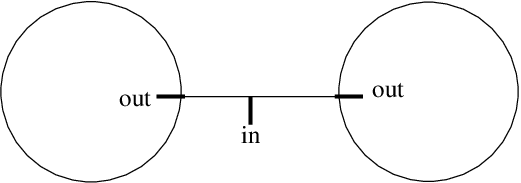}
\caption{String topology coproduct on $HH_*(A,A)$ [the dual of the cup product on cohomology $HH^*(A,A^\vee)\simeq HH^*(A,A)$]. }
\label{Figurenew3}
\end{figure}

The homotopy $h$ defined in (\ref{homotpy-cocom}) for the cocommutativity of $\theta$ corresponds to the diagram in  Figure~\ref{Figurenew4}.

\begin{figure}[h]
\centering
\includegraphics[scale=0.8]{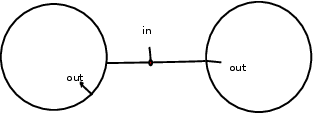}
\caption{The homotopy for cocommutativity of $\theta$.}
\label{Figurenew4}
\end{figure}

The  degree zero coproduct as defined in Cohen-Godin on $H_*(LM)$ is the dual of the following product on $HH_*(A,A)$:
\begin{equation}
(a_0[a_1,\cdots,a_n]) \circ (b_0[b_1,\cdots,b_m])=\begin{cases} 0,  \text{ if } n\geq 1,  \\ a''a'b_0[b_1,\cdots,b_m]  \text{ otherwise.}
\end{cases}
 \end{equation}

The product $\circ$ corresponds to the diagram in Figure~\ref{Figurenew1}, which is equivalent to the essentially trivalent graph in Figure~\ref{Figurenew2}. This is exactly the product introduced in the statement of Theorem~\ref{thm-co-op}.

\begin{figure}[htb]
\centering
\includegraphics[scale=0.6]{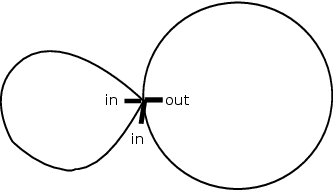}
\caption{The dual of the Cohen-Godin coproduct.}
\label{Figurenew1}
\end{figure}

\begin{figure}[htb]
\centering
 \includegraphics[scale=0.5]{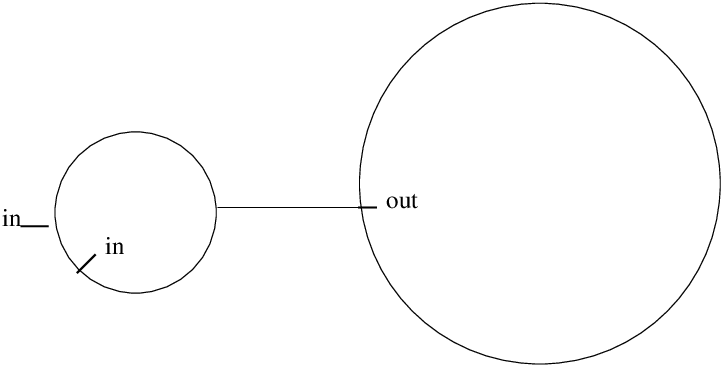}
\caption{Essentially trivalent graph corresponding to Cohen-Godin coproduct.}
\label{Figurenew2}
@\end{figure}

\medskip 

Now it remains to deal with the differentials. This is quite easy to check since collapsing the arcs on the input circles corresponds to the components of the Hochschild differential. The only nontrivial part concerns collapsing the arcs attached to the special points and this follows from the hypothesis that $A$ is an open Frobenius algebra with a counit. This shows that the action associates chain maps to cycles in  $( \{CS_k(g,m,n)\} _{k\geq 0},\partial)$. The homotopies between operations correspond to the action of the boundaries of the corresponding chains in $( \{CS_k(g,m,n)\}_{k\geq 0},\partial)$. We refer the reader to~\cite{TZ} for more details, or to~\cite{WahWest} for a different approach. Now one can explain all the homotopies in the previous section using this language.

The main result of this section can be formulated as follows:

\begin{theorem} For a closed Frobenius (DG) algebra $A$, the Hochschild chain complex $C_*(A,A)$ is an algebra over the PROP of Sullivan chord diagrams. Similarly, the Hochschild cochain complex $C^*(A,A)$ is an algebra over the PROP of Sullivan chord diagrams.
\end{theorem}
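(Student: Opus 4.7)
The plan is to organize the verification around three layers: well-definedness of the prescription described in Steps 1--5, compatibility with the differential, and functoriality under the PROP composition. First I would fix, for each Sullivan chord diagram $\gamma$, an essentially trivalent representative in its equivalence class, and show that any two such representatives define the same operator on $C_*(A,A)^{\otimes m}$. The local moves to verify are exactly the two sliding moves: sliding a tree vertex across an edge of the chord tree, and sliding an input marked point along the chord tree. For the first move, the induced operation $A^{\otimes s}\to A^{\otimes r}$ attached to the chord tree does not depend on how the tree is expressed as an iterated combination of products and coproducts precisely because of the Frobenius bimodule identity $\sum (xy)'\otimes(xy)''=\sum x\cdot y'\otimes y''=\sum (-1)^{m|x|}x'\otimes x''\cdot y$, together with associativity and coassociativity; this is the standard check that any trivalent tree with products/coproducts on its vertices gives a well-defined map. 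For the second move, one uses the counit identities $\sum \eta(x'y)x''=xy$ (which hold on the nose for closed Frobenius algebras and, for open Frobenius algebras, are replaced by the usual counit axioms implicit in the existence of the special vertex $1$). The output is a well-defined element of $C_*(A,A)^{\otimes n}$.

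Next I would verify that the assignment $\gamma\mapsto \Phi_\gamma$ lands in chain maps of the correct degree. The combinatorial degree of $\gamma$ is, by definition, the number of connected components of $\gamma$ minus the special points, which matches the net change in the number of bracket entries coming from the coproducts inserted along the chord tree. The differential $\partial$ on $\{CS_k(g,m,n)\}$ collapses an edge of an input circle; translating this through the prescription produces precisely the contribution $a_i a_{i+1}$ of the internal Hochschild differential $d_1$, while the internal differential $d_0$ corresponds to distributing $d_A$ over the tensor factors read from the output circles. The one subtle point is a collapse of an arc adjacent to a special vertex on an input circle (i.e.\ a marked point or a chord endpoint): here the computation hinges on the coproduct being an $A$-bimodule map and on cocommutativity combined with coassociativity, so that the algebraic contribution is the same whether the arc is collapsed to the left or the right of the special vertex. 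I would carry this out one vertex type at a time (trivalent interior chord vertex, chord endpoint on an input circle, input marked point, output marked point).

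For PROP compatibility, I would show that for two composable diagrams $\gamma_1$ and $\gamma_2$, one has $\Phi_{\gamma_2\circ\gamma_1}=\Phi_{\gamma_2}\circ\Phi_{\gamma_1}$ up to a chain homotopy coming from the choice of trivalent representative of the composite. After gluing, the output circles of $\gamma_1$ become input circles of $\gamma_2$ (with their marked points identified), and the coproduct elements produced while evaluating $\gamma_1$ become the entries along the corresponding input circle of $\gamma_2$. The same Frobenius relations as in Step 1 imply that one can equivalently perform the coproduct insertions associated to $\gamma_1$ and $\gamma_2$ sequentially or as a single evaluation of $\gamma_2\circ\gamma_1$; this is where the open Frobenius identity $\sum(a_0b_0)'\otimes(a_0b_0)''=\sum a_0'\otimes a_0''b_0=\sum a_0 b_0'\otimes b_0''$ used in Theorem \ref{thm-BV_hom} plays its role.

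The main obstacle I expect is the chain-map property at the input marked points and chord endpoints: the edge-collapse differential there must match the Hochschild face map on the bar side, and one has to carefully track the signs coming from the plane orientation, the cyclic ordering at each vertex, and the Koszul sign rule on $C_*(A,A)^{\otimes n}$. The cleanest way is to reduce every verification to the three minimal diagrams already exhibited in the excerpt (the BV diagram of Figure \ref{diag3}, the coproduct $\theta$ of Figure \ref{Figurenew3}, and the Cohen--Godin/Chas--Sullivan product of Figures \ref{Figurenew1}--\ref{Figurenew6}), for which the explicit homotopies worked out in Section \ref{section2} handle the required relations. The cochain version $C^*(A,A^\vee)$ then follows by dualizing, and the variant on $C^*(A,A)$ uses the quasi-isomorphism $A\simeq A^\vee$ induced by an inner product when one exists, recovering Tradler--Zeinalian \cite{TZ} in the closed Frobenius case.
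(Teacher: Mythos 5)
Your plan follows the paper's own argument essentially verbatim: the operation is the Steps 1--5 prescription, well-definedness rests on the Frobenius bimodule identity together with (co)associativity and (co)commutativity, and the chain-map property comes from matching arc collapses on input circles with the components of the Hochschild differential, the only delicate collapses being those at the special points. The one place you go beyond the paper is the PROP-composition check, which the paper explicitly sets aside; on the other hand, like the paper, you are vague about exactly which unit/counit identity handles the sliding of an input marked point over a chord for a merely \emph{open} Frobenius algebra --- the paper itself slips into assuming a counit at precisely that step, which by its own exercise would force $A$ to be closed Frobenius.
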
 

In particular, the action of Sullivan chord diagrams provides to us the compatibility relations between the product and coproduct in order to obtain an open Frobenius algebra (see~\cite{CG} for more details):

\begin{corollary}Let $A$ be a closed Frobenius (DG) algebra. Then $HH_*(A,A)$ and $HH^*(A,A^\vee)$ are open Frobenius algebras.  The product and coproduct for $HH_*(A,A)$  are respectively made explicit in Theorem~\ref{thm-co-op} and Theorem~\ref{themop-cop}.
\end{corollary}
Note that we had already identified the product and coproduct of this open Frobenius algebra structure.

\begin{remark}
As we saw above, the product and coproduct  on $HH_*(A,A)$ require only an open Frobenius algebra structure on $A$.  The results of this section do not prove that the product and coproduct are compatible so that  $HH_*(A,A)$ is an open Frobenius algebra. The reason is that our proof of  the compatibility identities uses the action of some chord diagrams whose actions are defined only if $A$ is a closed Frobenius algebra. Still it could be true  that $HH_*(A,A)$ is an open Frobenius algebra if $A$ is only an open Frobenius algebra, but this needs a direct proof. 
\end{remark}

\section{Cyclic cohomology}\label{sec:cyclic}

In this section we briefly describe some of the structure carried by cyclic homology and negative cyclic homology, which is induced by that of Hochschild cohomology via Connes' long exact sequence. We recall that the \emph{cyclic chain complex}, respectively the \emph{negative cyclic chain complex}  of a  DG-algebra $A$ are
\begin{equation*}
\begin{split}
CC_*(A)&= (C_*(A,A)\otimes_\kk \kk[[u,u^{-1}]/u\kk[[u]], d+ uB),\\
CC_*^-(A)&= (C_*(A,A)\otimes_\kk \kk[[u]], d+ uB).\\
\end{split}
\end{equation*}
Here $u$ is a formal variable of degree $2$,  $d=d_{Hoch}$ and $B$ is the Connes operator 
%whereas $\kk[[u,u^{-1}]$ stands for the Laurent series in $u$ and $u\kk[[u]]$ stands for the submodule $C_*(A,A)\otimes_\kk u\kk[[u]]\subset C_*(A,A)[[u,u^{-1}]$. 
 The \emph{cyclic homology of $A$}\, is denoted $HC_*(A)$ and is the homology of the complex $CC_*(A)  $. The \emph{negative cyclic homology of $A$}Ê\, is denoted $HC^-_*(A)$ and is the homology of $CC_*^-(A)$.  
 
The \emph{cyclic cochain complex}, respectively the \emph{negative cyclic cochain complex} are defined to be:  
\begin{equation*}
\begin{split}
CC^*(A)&= (C^*(A,A^\vee)\otimes_\kk \kk[v], d^\vee+ vB^\vee),\\
CC^*_-(A)&= (C^*(A,A^\vee)\otimes_\kk \kk[v,v^{-1}]]/v\kk[v], d^\vee+ vB^\vee).
\end{split}
\end{equation*}
Here $v$ is a formal variable of degree $-2$. The corresponding cohomology groups are called \emph{cyclic cohomology}, respectively \emph{negative cyclic cohomology}, and are denoted $HC^*(A)$, resp. $HC^*_-(A)$.

\begin{lemma}[\cite{CS1}] Let $(A^*,\cdot,\Delta)$ be a BV-algebra and $L^*$ a graded vector space with a long exact sequence
\begin{equation}\label{Connes}
\xymatrix{\cdots \ar[r]& L^{k+2} \ar[r]  & L^{k} \ar[r]^-m  &  A^{k+1}  \ar[r]^-e & L^{k+1}\ar[r] & L^{k-1} \ar[r]^-m& A^k \ar[r]^-e &\cdots}
\end{equation}
such that $\Delta= m\circ e$. Then
$$
\{a,b\}:=(-1)^{|a|} e(ma\cdot mb)
$$
defines a graded Lie bracket on the graded vector space $L^*$. Moreover $m$ sends the Lie bracket to the opposite of the Gerstenhaber bracket, i.e.
$$
m\{a,b\}=-[ma,mb].
$$

\end{lemma}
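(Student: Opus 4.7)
My approach would be to first extract the algebraic identities that follow automatically from the hypotheses, then deduce the compatibility formula $m\{a,b\}=-[ma,mb]$, then graded antisymmetry, and finally the Jacobi identity. The central observation is that exactness at $A^{k+1}$ forces $e\circ m=0$, and since $\Delta=m\circ e$ this gives both $\Delta\circ m=m(em)=0$ and $e\circ \Delta=(em)e=0$. In particular every element $ma\in A$ is a $\Delta$-cycle and every element $\Delta x$ is killed by $e$; this is the key lever for everything that follows.

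I would begin with the compatibility identity, which is quick and anchors the rest. By definition, $m\{a,b\}=(-1)^{|a|}(me)(ma\cdot mb)=(-1)^{|a|}\Delta(ma\cdot mb)$. Plugging $x=ma$, $y=mb$ into the defining BV relation $[x,y]=(-1)^{|x|}\Delta(xy)-(-1)^{|x|}\Delta(x)y-x\Delta(y)$ and using $\Delta(ma)=\Delta(mb)=0$ yields $[ma,mb]=(-1)^{|ma|}\Delta(ma\cdot mb)=-(-1)^{|a|}\Delta(ma\cdot mb)=-m\{a,b\}$. Graded antisymmetry $\{a,b\}=-(-1)^{|a||b|}\{b,a\}$ then drops out from graded commutativity of the product on $A$: a direct sign computation gives $\{b,a\}=(-1)^{|a|+|b|+(|a|+1)(|b|+1)}\{a,b\}=(-1)^{|a||b|+1}\{a,b\}$, the simplification being essentially trivial because the bracket has degree $+2$, an even shift.

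For the Jacobi identity, the naive strategy of pushing everything into $A$ via $m$ and invoking Jacobi for the Gerstenhaber bracket fails because $m$ is not injective in general. Instead, my plan is to apply $e$ to the 7-term identity~\eqref{7term} with $x=ma$, $y=mb$, $z=mc$, writing $\alpha=ma$, $\beta=mb$, $\gamma=mc$. Since $\Delta\alpha=\Delta\beta=\Delta\gamma=0$, the 7-term relation collapses to
\begin{equation*}
\Delta(\alpha\beta\gamma)=\Delta(\alpha\beta)\gamma+(-1)^{|\alpha|}\alpha\Delta(\beta\gamma)+(-1)^{(|\alpha|-1)|\beta|}\beta\Delta(\alpha\gamma).
\end{equation*}
Applying $e$ kills the left-hand side since $e\circ\Delta=0$. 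For each surviving term on the right I write $\Delta(xy)=me(xy)$ and unfold $e(xy)=(-1)^{|x|}\{x',y'\}$ where $x=mx'$, $y=my'$, so that $e(\alpha\cdot\Delta(\beta\gamma))$ becomes, up to an explicit sign, $\{a,\{b,c\}\}$, and analogously $e(\beta\cdot\Delta(\alpha\gamma))=\pm\{b,\{a,c\}\}$; the term $e(\Delta(\alpha\beta)\gamma)$ needs one Koszul swap to bring $\gamma$ to the left, after which it becomes $\pm\{c,\{a,b\}\}$, and graded antisymmetry converts this into $\mp\{\{a,b\},c\}$. Collecting signs (again the even degree shift keeps things clean), the identity rearranges to the standard graded Jacobi $\{a,\{b,c\}\}=\{\{a,b\},c\}+(-1)^{|a||b|}\{b,\{a,c\}\}$.

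The conceptual obstacle is precisely the non-injectivity of $m$, which rules out the one-line proof; the workaround is the detour through the 7-term relation combined with $e\circ\Delta=0$. The remaining difficulty is bookkeeping Koszul signs in the three translations $e(\alpha\Delta(\beta\gamma))\leadsto \{a,\{b,c\}\}$ and the one antisymmetry swap, but I expect these to close up on the nose because the bracket sits in an even-shift regime where signs reduce to $(-1)^{|a||b|}$ in unshifted degrees, matching exactly what the BV identity produces.
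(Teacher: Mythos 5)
Your proposal is correct and follows essentially the same route as the paper: both proofs exploit exactness to get $e\circ m=0$, hence $\Delta\circ m=0$ and $e\circ\Delta=0$, reduce the Jacobi identity to the 7-term relation evaluated on $ma,mb,mc$ (you apply $e$ to the collapsed 7-term identity and unfold, the paper assembles the Jacobiator and recognizes it as $\pm\, e\Delta(ma\cdot mb\cdot mc)=0$ --- the same computation read in opposite directions), and derive $m\{a,b\}=-[ma,mb]$ from the defining BV relation using $\Delta(ma)=\Delta(mb)=0$. Your explicit check of graded antisymmetry is a small addition the paper leaves implicit.
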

\begin{proof}
We have,
\begin{equation}\begin{split}
&\{a,\{b,c\}\}=(-1)^{|a|+|b|}  e (ma \cdot \Delta(mb \cdot mc) )\\
&\{\{a,b \},c\}= (-1)^{|b|}  e (\Delta(ma \cdot mb) \cdot mc)\\
&\{b,\{a,c\}\}=(-1)^{|b|+|a|} e( mb \cdot \Delta(ma \cdot mc)) .
\end{split}
\end{equation}
Then
\begin{eqnarray}
\lefteqn{\{a,\{b,c\}\}- \{\{a,b \},c\}-(-1)^{|a||b|} \{b,\{a,c\}\}}\\
&=& (-1)^{|a|+|b|} e [ma \cdot \Delta(mb \cdot mc) 
+(-1)^{|a|+1}\Delta(ma \cdot mb) \cdot mc\nonumber \\
&& \qquad \qquad \qquad \qquad +(-1)^{|a||b|+1}mb \cdot \Delta(ma \cdot mc)]\nonumber\\
&=&(-1)^{|b|+1}e [\Delta(ma \cdot mb) \cdot mc 
+(-1)^{|a|+1} ma \cdot \Delta(mb \cdot mc)\nonumber \\
&& \qquad \qquad \qquad \qquad +(-1)^{|a|(|b|+1)}mb \cdot \Delta(ma \cdot mc)].\nonumber
\end{eqnarray}

By replacing $a$, $b$, and $c$ in the 7-term relation (\ref{7term}) respectively by $ma$, $mb$ and $mc$, we see that the last line in the above identity is equal to 
$$
(-1)^{|b|+1}e  \Delta (ma\cdot mb\cdot mc)=(-1)^{|b|+1}e m e  (ma\cdot mb\cdot mc)=0
$$ 
because of the exactness of the long exact sequence.
Therefore $ \{a,\{b,c\}\}- \{\{a,b \},c\}-(-1)^{|a|.|b|} \{b,\{a,c\}\}=0$, proving the Jacobi identity.

As for  the second statement,
\begin{equation}\begin{split}
m\{a,b\}&=(-1)^{|a|} me(ma\cdot mb)= (-1)^{|a|} \Delta (ma\cdot mb)\\&=(-1)^{|a|} ( (-1)^{|a|+1}[ma,mb] - \Delta (ma)\cdot mb+ (-1)^{|a|+1}ma\cdot \Delta (mb)]\\
&=-[ma,mb].
\end{split}
\end{equation}
\end{proof}

Using this lemma and Connes' exact sequence for the cyclic cohomology (or homology), 

\begin{equation}\label{Connes2}
\xymatrix{\cdots HC^{k+2}(A) \ar[r]  & HC^{k}(A) \ar[r]^-b  &  HH^{k+1}(A,A^\vee) \ar[r]^-i & HC^{k+1}(A)\cdots}
\end{equation}
we have:

\begin{corollary}
The cyclic cohomology and negative cyclic cohomology of an algebra whose Hochschild cohomology is a BV-algebra, has a natural graded Lie algebra structure given by
$$
\{x,y\}:= i(b(x)\cup b(y)).
$$

\end{corollary}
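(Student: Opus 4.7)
The plan is to deduce this corollary directly from the preceding lemma by taking $L^* = HC^*(A)$ (respectively $HC^*_-(A)$) and $A^* = HH^*(A,A^\vee)$, where by hypothesis the latter carries a BV-algebra structure with operator $\Delta = B^\vee$. The essential verification is that Connes' long exact sequence (\ref{Connes2}) has exactly the shape required by the lemma, and that the BV operator factors as $m\circ e$ in that sequence.

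First I would recall the construction of Connes' long exact sequence. It arises from the short exact sequence of bicomplexes
\begin{equation*}
0 \to C^*(A,A^\vee) \to CC^*(A) \xrightarrow{v^{-1}} CC^*(A)[-2] \to 0,
\end{equation*}
whose connecting map, at the level of cohomology, is precisely the Connes operator $B^\vee$ (up to the sign/shift conventions the paper has already fixed). Unraveling the connecting homomorphism shows that the composite $HC^{k}(A) \xrightarrow{b} HH^{k+1}(A,A^\vee) \xrightarrow{e} HC^{k+1}(A)$, followed by the next $b$ landing back in $HH$, yields $B^\vee$; equivalently, reading the sequence with the lemma's labels, $e \circ m$ on $HH^*$ equals $B^\vee = \Delta$. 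This is the identity $\Delta = m\circ e$ demanded by the hypothesis of the lemma (after matching conventions; one should check the sign carefully but it is harmless).

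Once this identification is in place, the lemma applies verbatim: setting
\begin{equation*}
\{x,y\} := (-1)^{|x|} e(m(x)\cup m(y))
\end{equation*}
defines a graded Lie bracket on $HC^*(A)$, and the map $m : HC^*(A) \to HH^{*+1}(A,A^\vee)$ intertwines this bracket, up to sign, with the Gerstenhaber bracket induced by the BV structure on $HH^*$. The statement of the corollary displays the formula without the overall sign $(-1)^{|x|}$; this discrepancy is purely a matter of normalization of the shift between $HC$ and $HH$, and one simply absorbs it into the definition.

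The same argument works for negative cyclic cohomology by replacing $CC^*(A)$ with $CC^*_-(A)$: the analogous short exact sequence of complexes yields a long exact sequence of the same shape, the connecting map is again $B^\vee$, and the lemma applies without modification. The only step that requires any care — and this is the main obstacle, though a minor one — is checking the sign convention so that $\Delta$ really equals $m\circ e$ and not its negative; if a sign appears, one incorporates it into the definition of $\{-,-\}$ and the Jacobi identity from the lemma remains valid.
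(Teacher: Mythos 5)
Your proposal is correct and follows exactly the route the paper intends: the paper offers no written proof beyond ``using this lemma and Connes' exact sequence,'' and your verification that the SBI sequence has the shape required by the lemma, with the composite $m\circ e$ on $HH^*(A,A^\vee)$ equal to the Connes operator $B^\vee=\Delta$, is precisely the content being invoked. Your remark about absorbing the sign $(-1)^{|x|}$ from the lemma's bracket into the normalization is a reasonable reading of the discrepancy between the lemma's formula and the corollary's displayed formula.
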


In fact one can prove something slightly stronger.

\begin{definition}
A \emph{gravity algebra} is a graded vector space $L^* $ equipped with maps
$$
\{\cdot, \cdots, \cdot\}:  L^{\otimes k}\rightarrow L
$$
satisfying the following identities: 
\begin{equation}
\begin{split}
\sum_{i,j}(-1)^{\epsilon_{i,j}}\{\{x_i,&x_j\},x_1, \cdots ,\hat{x}_i,\cdots, \hat{x}_j,\cdots,x_k,y_1,\cdots,y_l\}\\&=\begin{cases}
0, \text{ if } l=0,\\
\{ \{ x_1,\cdots,x_k\},y_1,\cdots,y_l \}, \text{ if } l>0,
\end{cases}
\end{split}
\end{equation}
where $\epsilon_{i,j}=|x_i| (\sum_{k=1}^{i-1}|x_k|) +|x_j| (\sum_{k=1, k\neq i}^{j-1} |x_k|)$.
\end{definition}

It is quite easy to prove that

\begin{proposition}  The cyclic and negative cyclic cohomology of an algebra whose Hochschild cohomology is a BV-algebra are naturally gravity algebras, where the brackets are given by
$$
\{x_1,\cdots,x_k\}:= i(b(x_1)\cup \cdots \cup b(x_k)).
$$
\end{proposition}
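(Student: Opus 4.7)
The plan is to extend the binary bracket of the preceding lemma to $k$-ary operations by
\begin{equation*}
\{x_1, \ldots, x_k\} := (-1)^{\varepsilon(x_1, \ldots, x_k)} \, e\bigl(m(x_1) \cup m(x_2) \cup \cdots \cup m(x_k)\bigr),
\end{equation*}
with a Koszul sign $\varepsilon$ extending the sign $(-1)^{|x_1|}$ used for $k=2$. The only algebraic inputs will be the BV-structure on $HH^*(A,A^\vee)$ and the two consequences of Connes' long exact sequence already exploited in the binary case, namely $e \circ m = 0$ and $\Delta = m \circ e$.

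The first step is to prove a generalized $(2n{+}1)$-term identity in any BV-algebra $(V,\cdot,\Delta)$, by induction on $n$ starting from \eqref{7term}:
\begin{equation*}
\Delta(c_1 \cdots c_n) = \sum_{i<j} \pm\, \Delta(c_i c_j) \prod_{r \neq i,j} c_r \;-\; (n-2) \sum_{i} \pm\, \Delta(c_i) \prod_{r \neq i} c_r.
\end{equation*}
This is the standard expression of the fact that $\Delta$ is a differential operator of order $\leq 2$. The observation tailored to the present context is that if each $c_i$ lies in the image of $m$, then $\Delta(c_i) = m(e(c_i)) = 0$ by $em=0$, so the whole correction sum vanishes and one is left with
\begin{equation*}
\Delta\bigl(m(x_1)\cdots m(x_k)\bigr) = \sum_{1\leq i<j\leq k} \pm\, \Delta\bigl(m(x_i)\,m(x_j)\bigr) \prod_{r\neq i,j} m(x_r).
\end{equation*}

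The second step is to substitute the definition of the higher bracket into the left-hand side of the gravity axiom and use $m\{x_i,x_j\} = \pm\,\Delta(m(x_i)\,m(x_j))$, which is just the relation $\Delta = m\circ e$. The left-hand side then takes the form
\begin{equation*}
\pm\, e\!\left( \sum_{1\leq i<j\leq k} (-1)^{\epsilon_{ij}}\, \Delta\bigl(m(x_i)\,m(x_j)\bigr) \prod_{r\neq i,j} m(x_r) \cdot m(y_1)\cdots m(y_l) \right),
\end{equation*}
and the displayed simplification of the generalized $(2n{+}1)$-term identity rewrites the argument as $\Delta(m(x_1)\cdots m(x_k))\cdot m(y_1)\cdots m(y_l)$. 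When $l>0$ the relation $\Delta = m\circ e$ exhibits this as $\pm\,e\bigl(m\{x_1,\ldots,x_k\}\cdot m(y_1)\cdots m(y_l)\bigr)$, which is by definition $\pm\,\{\{x_1,\ldots,x_k\},y_1,\ldots,y_l\}$. When $l=0$ the expression equals $\pm\,(e\circ m\circ e)(m(x_1)\cdots m(x_k))$, which vanishes by exactness of Connes' sequence.

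The main obstacle will be the sign bookkeeping: a single choice of $\varepsilon(x_1,\ldots,x_k)$ must reconcile the Koszul signs from the generalized $(2n{+}1)$-term identity with the $\epsilon_{ij}$ prescribed by the gravity axiom. Two useful sanity checks along the way are the case $k=3$, $l=0$, which must recover the Jacobi identity for $\{-,-\}$ already proved in the preceding lemma (it follows from the same $e\circ m\circ e = 0$ cancellation above), and the case $k=2$, $l=1$, which is an essentially tautological consequence of $\Delta = m\circ e$ and serves to fix the overall sign convention.
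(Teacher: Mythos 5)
Your proposal is correct and follows essentially the same route as the paper, which simply cites the generalized form of the $7$-term identity $\Delta(x_1\cdots x_n)=\sum\pm\Delta(x_ix_j)\,x_1\cdots\hat{x}_i\cdots\hat{x}_j\cdots x_n$ and leaves the rest to the reader. You in fact state that identity more carefully than the paper does (keeping the first-order correction terms $\Delta(c_i)$ and observing that they vanish on the image of $m$ because $e\circ m=0$), and your reduction of the gravity axioms to $\Delta=m\circ e$ together with $e\circ m\circ e=0$ is exactly the intended argument, matching the binary case treated in the preceding lemma.
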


The proof is a consequence of the following identity for BV-algebras:
\begin{equation}
\begin{split}
\Delta( x_1\cdots x_n)=\sum \pm \Delta(x_ix_j ) \cdot  x_1\cdots \hat{x}_i \cdots \hat{x}_j \cdots x_n.
\end{split}
\end{equation}
This is a generalized form of the 7-term identity which is rather easy to prove. We refer the reader to~\cite{West} for a more operadic approach on the gravity algebra structure.

\medskip

The Lie bracket on cyclic homology is known in the literature under the name of \emph{string bracket}. For surfaces it was discovered by W. Goldman~\cite{Gold} who studied the symplectic structure of the representation variety of fundamental groups of surfaces, or equivalently the moduli space of flat connections. His motivation came from Teichm\"uller theory.
%in the dynamics of Teichm\"uller theory and Hamiltonian vector fields of  Thurston earthquakes. 
The Goldman bracket was generalized by Chas and Sullivan to manifolds of all dimensions using a purely topological construction. The geometric description of the string bracket given in~\cite{AZ} (and~\cite{ATZ}) generalizes Goldman's computation for surfaces to arbitrary even dimensions using Chen iterated integrals.

\bibliography{Bib-IRMA-Hoch}

\bibliographystyle{amsalpha}

\end{document}